\newtheorem{thm}{Theorem}[section]
\newtheorem{lem}[thm]{Lemma}
\newtheorem{cor}[thm]{Corollary}
\newtheorem{prop}[thm]{Proposition}
\newtheorem{exam}[thm]{Example}
\newtheorem{defn}[thm]{Definition}
\newtheorem{problem}{Problem}
\renewcommand{\leq}{\leqslant}
\renewcommand{\geq}{\geqslant}
\newcommand{\aut}{\mbox{\rm Aut\,}}
\newcommand{\rank}{\operatorname{rank}}
\newcommand{\genset}[1]{\ensuremath{\langle\: #1 \:\rangle}}
\newcommand{\trans}{\mathcal{T}_{n}}
\newcommand{\sym}{\mathcal{S}_{n}}
\newcommand{\alt}{\mathcal{A}_{n}}
\newcommand{\agl}{\mbox{\rm AGL}}
\newcommand{\psl}{\mbox{\rm PSL}}
\newcommand{\pgl}{\mbox{\rm PGL}}
\newcommand{\asl}{\mbox{\rm ASL}}
\newcommand{\pgaml}{\mbox{\rm P}\Gamma {\rm L}}
\newcommand{\agaml}{\mbox{\rm A}\Gamma {\rm L}}
\newcommand{\psigl}{\mbox{\rm P}\Sigma {\rm L}}
\newcommand{\pgamu}{\mbox{\rm P}\Gamma {\rm U}}
\newcommand{\Sp}{\mbox{\rm Sp}}
\newcommand{\dihed}[1]{\ensuremath{D_{#1}}}
\newcommand{\M}{\mbox{\rm M}}
\newcommand{\GF}{\mbox{\rm GF}}
\newcommand{\G}{{\mbox{\rm G}}}
\newcommand{\Co}{{\mbox{\rm Co}}}
\newcommand{\Sz}{\mbox{\rm Sz}}
\newcommand{\HS}{\mbox{\rm HS}}
\newcommand{\ut}[1]{has the ${#1}$-ut property}
\newcommand{\notut}[1]{does not have the ${#1}$-ut property}
\renewcommand{\to}{\longrightarrow}
\begin{document}



\title[Groups and regular semigroups]{Two Generalizations of Homogeneity in Groups with Applications to Regular Semigroups}
\author{Jo\~{a}o Ara\'{u}jo}
\author{Peter J. Cameron}

\address[Ara\'{u}jo]
{Universidade Aberta
and
Centro de \'{A}lgebra \\
Universidade de Lisboa \\
Av. Gama Pinto, 2, 1649-003 Lisboa \\ Portugal}
\email{\url{jaraujo@ptmat.fc.ul.pt}}

\address[Cameron]{Department of Mathematics \\
School of Mathematical Sciences at Queen Mary\\ University of London}
\email{\url{P.J.Cameron@qmul.ac.uk }}

\maketitle


\begin{abstract}
Let $X$ be a finite set such that $|X|=n$ and let $i\leq j \leq n$. A group $G\leq \sym$ is said to be $(i,j)$-homogeneous if for every $I,J\subseteq X$, such that $|I|=i$ and $|J|=j$, there exists $g\in G$ such that $Ig\subseteq J$. (Clearly $(i,i)$-homogeneity is $i$-homogeneity in the usual sense.)

A group $G\leq \sym$ is said to have the $k$-universal transversal property if given any set $I\subseteq X$ (with $|I|=k$) and any partition $P$ of $X$ into $k$ blocks, there exists $g\in G$ such that $Ig$ is a section for $P$. (That is, the orbit of each $k$-subset of $X$ contains a section for each $k$-partition of $X$.)

In this paper we classify the groups with the $k$-universal transversal property (with the exception of two classes of $2$-homogeneous groups) and the $(k-1,k)$-homogeneous groups (for $2<k\leq \lfloor \frac{n+1}{2}\rfloor$). As a corollary of the classification we prove that a $(k-1,k)$-homogeneous group is also $(k-2,k-1)$-homogeneous, with two exceptions; and similarly, but with no exceptions, groups having the $k$-universal transversal property have the $(k-1)$-universal transversal property.  

A corollary of all the previous results is a  classification of  the groups that together with any rank $k$ transformation on $X$ generate a regular semigroup (for $1\leq k\leq \lfloor \frac{n+1}{2}\rfloor$).

The paper ends with a number of challenges for experts in number theory, group and/or semigroup theory, linear algebra and matrix theory.  
\end{abstract}

\medskip

\noindent{\em Date:} 10 August 2011\\
{\em Key words and phrases:} Transformation semigroups, regular semigroups, permutation groups, primitive
groups, homogeneous groups\\
{\it 2010 Mathematics Subject Classification:}  20B30, 20B35,
20B15, 20B40, 20M20, 20M17. \\
{\em Corresponding author: Jo\~{a}o Ara\'{u}jo, jaraujo@ptmat.fc.ul.pt}





\newpage

\section{Introduction and Preliminaries}

One of the fundamental trends in
 semigroup theory has been the study of how idempotents shape the structure of the semigroup. Howie's book \cite{Ho95} can be seen as an excellent survey of the results obtained from the 40s to the 90s  on this general problem. Evidently, there is the analogous question for the group of units, namely, to what extent the group of units shapes the structure of the semigroup. (And a similar question can be asked about the normalizer of the semigroup; more on this below.) However, unlike the idempotents case, the group of units approach quickly leads to problems that could not be tackled with the tools available 30 years ago, let alone 70 years ago. Fortunately now the situation is totally different, since the enormous  progress made  in the last decades in the theory of permutation groups  provides   the necessary tools to develop semigroup theory from this different point of view.

A particular instance of the general problem of investigating how the group of units shapes the whole semigroup might be described as follows:
{\em classify the pairs $(a,G)$, where $a$ is a map on $X$ and $G$ is a group of permutations of $X$,
such that the semigroup  $\genset{a,G}$, generated by $a$ and $G$,  has a given property}.  (Observe that whenever $S$ is a semigroup with group of units $G$ we have  $S=\cup_{a\in S}\genset{a,G}$.)

A very important class of groups that falls under this general scheme is that of synchronizing groups, groups of permutations on a set that together with any non-invertible map on the same set generate a constant (see \cite{steinberg}, \cite{cameron}, \cite{neumann}). These groups are very interesting from a group theoretic  point of view and are linked  to the \v{C}ern\'y conjecture,  a longstanding open problem in automata theory.

Another instance of the general problem described above is the following: {\em classify the permutation groups on a set that together with any map on that set generate a regular semigroup}. (An element $a$ in a semigroup $S$ is said to be regular if there exists $b\in S$ such that $a=aba$. The semigroup is said to be regular if all its elements are regular.) This question has been answered in \cite{ArMiSc} as follows. (From now on $\sym$ will denote the symmetric group  on the set $[n]:=\{1,\ldots,n\}$; by $\trans$ we will denote the full transformation monoid on  $[n]$. We use the notation $D(2*n)$ for the dihedral group of order $2n$, called either $D_{n}$ or $D_{2n}$ in the literature; other notation for finite groups is standard.) 

\begin{thm}\label{th2}
If $n\geq 1$ and $G$ is a subgroup of $\sym$,  then the following are equivalent:
\begin{enumerate}
    \item[\rm (i)] The semigroup $\genset{G,a}$ is regular for all $a\in \trans$.
 \item[\rm (ii)] One of the following is valid for $G$ and $n$:
    \begin{enumerate}
    \item[\rm (a)] $n=5$ and $G\cong C_5,\  D(2*5),$ or $\agl(1,5)$;
    \item[\rm (b)] $n=6$ and $G\cong \psl(2,5)$ or $\pgl(2,5)$;
    \item[\rm (c)]  $n=7$ and $G\cong\agl(1,7)$;
    \item[\rm (d)] $n=8$ and $G\cong\pgl(2,7)$;
    \item[\rm (e)] $n=9$ and $G\cong\psl(2,8)$ or $\pgaml(2,8)$;
    \item[\rm (f)] $G=\alt$ or $\sym$.
\end{enumerate}
\end{enumerate}
\end{thm}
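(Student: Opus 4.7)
My plan is to translate the semigroup-theoretic hypothesis into a combinatorial property of $G$ (the $k$-universal transversal property, for all $k$), and then invoke classifications of finite permutation groups.

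First, I would reduce regularity of $\langle G,a\rangle$ for every $a\in\trans$ to the existence, for each such $a$, of some $b\in\langle G,a\rangle$ with $aba=a$. The elements of $G$ are automatically regular, and an induction on the number of copies of $a$ in a word shows that every element of $\langle G,a\rangle$ is regular once $a$ itself admits such an inverse. Writing $I=\im a$ and $B_i=ia^{-1}$ for $i\in I$, the condition $aba=a$ unpacks to $ib\in B_i$ for every $i\in I$.

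Next, I would show that this is equivalent to $G$ having the $k$-universal transversal property, where $k=\rank a$. For the nontrivial implication, pick $g\in G$ with $Ig$ a transversal of $P=\{B_i\}$; then $ga$ restricts to a permutation $\pi$ of $I$, where $\pi(i)$ labels the block containing $ig$. Letting $d$ be the order of $\pi$, the element $b=g(ag)^{d-1}$ satisfies $ib=\pi^{d-1}(i)g\in B_{\pi^d(i)}=B_i$, as required. Conversely, if no $g\in G$ sends $I$ to a transversal of $P$, then for any word $b=g_0ag_1a\cdots ag_r\in\langle G,a\rangle$, each segment $g_ja$ restricted to $I$ fails to be a bijection, so the image of $b|_I$ has size strictly less than $k$, making it impossible for $\{ib : i\in I\}$ to be a transversal of $P$.

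Finally, the full hypothesis of Theorem~\ref{th2}(i) becomes: $G$ has the $k$-UT property for every $k$. The $1$-UT property forces transitivity, $2$-UT forces (essentially) $2$-homogeneity, and combining Livingstone--Wagner with Kantor's classification of $k$-homogeneous groups pushes $G$ to be $\alt$ or $\sym$ apart from the small-degree exceptions listed in (a)--(e). The main obstacle is the boundary verification: for each candidate group in (a)--(e) one must check $k$-UT directly for every relevant $k$ (often via orbit-counting or \textsf{GAP}), and conversely one must rule out further $2$-homogeneous candidates such as $\psl$ and $\pgl$ of moderate degree, which are transitive on pairs but fail $k$-UT for some small $k\geq 3$.
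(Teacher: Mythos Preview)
Your reduction of (i) to the $k$-universal transversal property for every $k$ is correct and is exactly the route the paper follows (this equivalence is Theorem~\ref{superlemma}, quoted from \cite{ArMiSc}); the construction $b=g(ag)^{d-1}$ is the standard one (compare Lemma~\ref{auxl}). One small correction: the reduction from ``$\langle G,a\rangle$ regular for all $a$'' to ``$a$ regular in $\langle G,a\rangle$ for all $a$'' does not need any induction on word length---for $b\in\langle G,a\rangle$ apply the hypothesis to $b$ itself and use $\langle G,b\rangle\subseteq\langle G,a\rangle$. Your phrasing suggests the stronger (and false) claim that regularity of a \emph{single} $a$ already makes $\langle G,a\rangle$ regular.

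The genuine gap is in your final step. First, ``$2$-UT forces (essentially) $2$-homogeneity'' is simply wrong: the $2$-ut property is exactly primitivity (Proposition~\ref{primlem}), and many primitive groups are not $2$-homogeneous. More importantly, Livingstone--Wagner and Kantor classify $k$-\emph{homogeneous} groups, and nothing you have written produces $k$-homogeneity for any $k>1$, so your appeal to those classifications is unjustified. The missing bridge is that $k$-ut implies $(k-1,k)$-homogeneity (consider $k$-partitions with $k-1$ singleton parts); one then either shows directly that $(\lfloor n/2\rfloor-1,\lfloor n/2\rfloor)$-homogeneity forces $\alt\le G$ for $n\ge12$ (this is Theorem~\ref{th3}, the ``critical observation'' behind the proof in \cite{ArMiSc}), or invokes the classification of $(k-1,k)$-homogeneous groups (Theorem~\ref{2.1}) to deduce $(k-1)$-homogeneity. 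Only after that step do Livingstone--Wagner and Kantor become available, and the small degrees are finished by direct computation.
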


The critical observation that led to  the proof of Theorem \ref{th2} is that for $n\geq 12$, if $G\ (\leq \sym)$ satisfies the property that  any rank $\lfloor\frac{n}{2}\rfloor$ map $a\in \trans$ is regular in the semigroup $\genset{G,a}$, then $G$ contains the alternating group. Therefore Theorem \ref{th2} can be seen as an (almost) immediate corollary of the following result.

\begin{thm}\label{th3}
If $n\geq 12$ and $G$ is a subgroup of $\sym$,  then the following are equivalent:
\begin{enumerate}
    \item[\rm (i)] The element $a$ is regular in $\genset{G,a}$, for all $a\in \trans$ such that  $\rank(a)=\lfloor\frac{n}{2}\rfloor$.
\item[\rm (ii)] $G=\alt$ or $\sym$.
\end{enumerate}
\end{thm}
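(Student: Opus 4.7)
The plan is to translate condition (i) into the $k$-universal transversal property for $k = \lfloor n/2 \rfloor$, and then invoke the classification of such groups announced in the abstract to identify $G$.

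First I would establish the regularity--transversal dictionary: for $a \in \trans$ of rank $k$, with $I = \im(a)$ and $P = \ker(a)$ the associated $k$-partition, $a$ is regular in $\genset{G, a}$ if and only if there exists $g \in G$ such that $Ig$ is a transversal of $P$. The ``if'' direction is constructive---given such a $g$, one builds a witness $b \in \genset{G, a}$ satisfying $aba = a$ by composing $g$ with a suitable element of $\langle a \rangle$ that inverts $a$ on the transversal $Ig$. The ``only if'' direction observes that $aba = a$ forces the restriction of $b$ to $I$ to be a bijection onto a transversal of $P$; since $b \in \genset{G, a}$, a short argument reduces this to the existence of an element of $G$ with the required transversal property. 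As every $k$-subset and every $k$-partition arises as the image and kernel of some rank-$k$ transformation, condition (i) is equivalent to $G$ having the $k$-ut property for $k = \lfloor n/2 \rfloor$.

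With this dictionary, the direction (ii) $\Rightarrow$ (i) is immediate: $\alt$ is $(n-2)$-transitive and therefore $k$-homogeneous for every $k \leq n-2$, and any $k$-homogeneous group trivially has the $k$-ut property (given $(I, P)$, pick any transversal $J$ of $P$ and map $I$ onto $J$). For the converse (i) $\Rightarrow$ (ii), I would invoke the $k$-ut classification promised in the abstract, specialised to $k = \lfloor n/2 \rfloor$. The natural workflow is: first, show that $k$-ut forces $k$-homogeneity in this regime; then apply the Livingstone--Wagner theorem to upgrade $k$-homogeneity (with $5 \leq k \leq n/2$) to $k$-transitivity; finally, conclude via the CFSG-based classification of $k$-transitive groups that, for $k \geq 6$, only $\alt$ and $\sym$ survive, since no Mathieu group is $6$-transitive. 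Because $n \geq 12$ ensures $k = \lfloor n/2 \rfloor \geq 6$, this forces $G \supseteq \alt$.

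The main obstacle is the first step above: showing that the $k$-ut property alone entails $k$-homogeneity when $k$ is close to $n/2$. A priori, $k$-ut only requires each orbit of $G$ on $k$-subsets to meet every transversal class of every $k$-partition, which is much weaker than demanding a single orbit on all $k$-subsets. One must exploit the abundance of $k$-partitions with tiny blocks (average block size close to $2$ when $k = \lfloor n/2 \rfloor$) to show that two distinct orbits on $k$-subsets cannot both meet every transversal class---presumably via a counting argument, or by constructing an explicit ``bad'' partition that witnesses failure of $k$-ut for a given non-homogeneous $G$. This is where CFSG and detailed information about primitive actions on $k$-subsets must enter, and it is the real technical core of the reduction.
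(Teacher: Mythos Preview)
Your regularity--transversal dictionary and the easy direction (ii)$\Rightarrow$(i) are correct and match the paper. For (i)$\Rightarrow$(ii), however, your ``natural workflow'' aims to show that the $k$-ut property forces $k$-homogeneity directly, and you yourself flag this as the main obstacle. That obstacle is real, and the paper does \emph{not} overcome it along the lines you suggest (counting arguments on $k$-subset orbits, or building a bad partition for each non-$k$-homogeneous group).

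The paper's route sidesteps the obstacle entirely by dropping one level. The easy but crucial observation is that $k$-ut implies $(k-1,k)$-homogeneity: given a $(k-1)$-set $J$ and a $k$-set $I$, apply $k$-ut to $I$ and the partition with singleton parts $\{j\}$ for $j\in J$ and one large part $[n]\setminus J$; any section in the orbit of $I$ must contain $J$. The substantial work is then Theorem~\ref{2.1}, the classification of $(k-1,k)$-homogeneous groups, which shows that for $n\ge 2k+1$ such a group is already $(k-1)$-homogeneous apart from a handful of exceptions of degree at most~$9$. From here the endgame runs as in your sketch but shifted down by one: for $n\ge 12$ we have $k-1\ge 5$, so Livingstone--Wagner gives $(k-1)$-transitivity, and CFSG leaves only $\alt$, $\sym$, $\M_{12}$, $\M_{24}$. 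The group $\M_{24}$ is not $11$-homogeneous, and $\M_{12}$ (the sole surviving candidate at $n=12$, $k=6$) is eliminated not by transitivity degree but because it preserves the Steiner system $S(5,6,12)$ and hence fails the $6$-ut property. So the missing idea in your proposal is precisely this detour through $(k-1,k)$-homogeneity; once you take it, no direct attack on $k$-homogeneity is needed.
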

Below a sharper version of this result is going to be stated.

The aim of this paper is to carry out a deeper group theoretic analysis in order to prove a lemma on the groups $G$  that satisfy the following property: for every rank $k$ map $a$  (with fixed $k$ such that $1\leq k\leq \lfloor\frac{n+1}{2}\rfloor$), $a$ is regular in $\genset{G,a}$; and then extract many important consequences from this result.

In order to understand these groups we need some observations. Suppose $G\leq \sym$ and all rank $k$ maps $a\in \trans$ are regular in  $\genset{G,a}$. Then there exists $b\in  \genset{G,a}$ such that $a=aba$ and hence $ab$ is an  idempotent in $\genset{G,a}$ having the same rank as $a$, where $b=a$ or $b\in G$ or $b=g_{1}ag_{2}a\ldots ag_{m}$, for $m\geq 2$. Suppose $b\in G$. Then there exists $g\in G$ (namely $g=b$) such that $ag$ is idempotent and hence $[n]ab$ is a section for the partition of $[n]$ induced by the kernel of $a$. Suppose $b=g_{1}ag_{2}a\ldots ag_{m}$ with $m\geq 2$. Since $ab$ is idempotent and  $ab=ag_{1}ag_{2}a\ldots ag_{m}$ it follows that  there exists $g\in G$ (namely $g=g_{1}$) such that $a$ and $aga$ have the same rank. In both cases there exists $g\in G$ such that  $[n]ag$ is a section for the partition of $[n]$ induced by the kernel of $a$. As this property must hold for all rank $k$ maps $a\in \trans$, it follows that $G$ must satisfy the following $k$-\textit{universal transversal property}: for every $k$-set $I\subseteq [n]$  and every partition  $P$ of $[n]$ into $k$ blocks, there exists $g\in G$ such that $Ig$ is a section for $P$. The next six results (almost) provide  the  classification of the groups possessing the $k$-universal transversal property. 

The first theorem handles the permutation groups of small degree. 

\begin{thm}\label{th4a}
For $n<11$ and $2\leq k\leq \lfloor\frac{n+1}{2}\rfloor$, a group  $G\leq \sym$ with the $k$-universal transversal property is
$k$-homogeneous, with the following exceptions: 
\begin{enumerate}
\item $n=5$,  $G\cong C_{5}$ or $D(2*5)$ and $k=2$;
\item $n=6$, $G\cong \psl(2,5)$ and $k=3$;
\item  $n=7$,  $G\cong C_{7}$ or $G\cong D(2*7)$, and $k=2$; or 
$G\cong\agl(1,7)$ and $k=3$;
\item $n=8$, $G\cong \pgl(2,7)$ and $k=4$;
\item  $n=9$,  $G\cong 3^{{2}}:4$ or $G\cong 3^{{2}}:D(2*4)$ and $k=2$;
\item $n=10$, $G\cong \mathcal{A}_{5} $ or $G\cong \mathcal{S}_{5}$ and $k=2$; or 
$G\cong\psl(2,9)$ or $G\cong \mathcal{S}_{6}$   and $k=3$.
\end{enumerate}
\end{thm}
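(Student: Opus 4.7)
The plan is a finite case analysis, feasible because we are restricted to $n \le 10$. The central observation is that every $k$-homogeneous group automatically has the $k$-universal transversal property: if $G$ is transitive on $k$-subsets of $[n]$, then for any $k$-set $I$ and any $k$-partition $P$ one picks a section $S$ of $P$ and uses $k$-homogeneity to produce $g \in G$ with $Ig = S$. So the content of the theorem is to identify the subgroups of $\sym$ (for $n \le 10$) that have the $k$-ut property without being $k$-homogeneous, and to verify that this list coincides with the exceptions (1)--(6) in the statement.

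A preliminary reduction is that any $G \le \sym$ with $k$-ut for $k \ge 2$ must be transitive. Indeed, if $G$ had an orbit $O$ of size at least $k$ but less than $n$, a $k$-set $I \subseteq O$ would satisfy $Ig \subseteq O$ for every $g \in G$, contradicting $k$-ut against a partition with a singleton block outside $O$; the case where every orbit has size less than $k$ is excluded by a similar argument using partitions that respect the orbit structure. So the search is restricted to transitive subgroups of $\sym$, which for $n \le 10$ are completely tabulated (e.g.\ in the GAP library of transitive groups).

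For each transitive $G \le \sym$ and each $k$ in the stated range, I would compute the orbits of $G$ on $k$-subsets (a single orbit means $G$ is $k$-homogeneous and needs no further check); when there are several orbits, I test the $k$-ut property by letting $(I, P)$ range over orbit-representatives of $k$-subsets and $k$-partitions respectively and checking whether some image $Ig$ is a section of $P$. The main obstacle is the bookkeeping at $n = 9, 10$, where the transitive-subgroup lattice is largest, but each individual check is elementary since both the groups and the numbers of partitions involved are small. For the groups appearing in items (1)--(6), $k$-ut can be verified directly from the natural action of each group (projective line actions of $\psl(2,q)$ and $\pgl(2,q)$; affine actions of $\agl(1,q)$ and of $3^2:4$ and $3^2:D(2*4)$; and the natural actions of the cyclic, dihedral, and $\mathcal{S}_5$-on-pairs exceptions); for every other transitive non-$k$-homogeneous subgroup of $\sym$, one exhibits an explicit pair $(I, P)$ witnessing the failure of $k$-ut.
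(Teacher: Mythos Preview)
Your approach is correct and, at its core, the same as the paper's: a finite computer check for $n\le 10$. The paper's proof of this statement (inside the proof of the combined Theorem~\ref{RegularAndKut}) literally says ``all the claims can be easily checked with GAP'' and then exhibits, for each $(k-1)$-homogeneous but not $k$-homogeneous group that fails $k$-ut, an explicit witnessing pair $(I,P)$ (e.g.\ for $7{:}3$ at $n=7$, $k=3$, the orbit of $\{1,2,7\}$ misses the partition $\{\{1\},\{2,7\},\{3,4,5,6\}\}$).

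The one substantive difference is the preliminary reduction. You reduce only to \emph{transitive} groups, whereas the paper first invokes Proposition~\ref{critical}(2): $k$-ut implies $(k-1,k)$-homogeneity, hence $(k-1)$-homogeneity apart from the five small exceptions of Theorem~\ref{th6}. That cuts the search space dramatically---for each $n\le 10$ there are only a handful of $(k-1)$-homogeneous non-$k$-homogeneous groups to test, rather than the full transitive-group library. Your broader search is still finite and feasible, so this is a matter of efficiency, not correctness.

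One small gap to tidy: your transitivity argument is incomplete in the case ``every orbit has size less than $k$''. The clean fix, under the hypothesis $k\le\lfloor(n+1)/2\rfloor$, is: for any orbit $O$, either $|O|\ge k$ (and your first argument applies with $I\subseteq O$), or $|O|<k$, in which case $|[n]\setminus O|\ge n-(k-1)\ge k$; take $I\subseteq [n]\setminus O$ and the partition with $O$ as one block, so $Ig\subseteq[n]\setminus O$ can never be a section. Alternatively, you could simply quote that $k$-ut implies $(k-1,k)$-homogeneity (via partitions with $k-1$ singleton parts) and use the transitivity proposition from Section~2.
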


The next results deal with the groups of degree larger than 10. We start by the case of groups possessing the $k$-universal transversal property, for large values of $k$. 

\begin{thm}\label{th4b}
Let $n\geq 11$, $G\leq \sym$.
If $6\leq k\leq \lfloor \frac{n+1}{2}\rfloor$, then the following are equivalent: 
		\begin{enumerate}
			\item $G$ has the $k$-universal transversal property; 
			\item $\alt\leq G$. 
		\end{enumerate}
\end{thm}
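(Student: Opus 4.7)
The easy direction $(2) \Rightarrow (1)$ follows at once: if $\alt \leq G$, then $G$ is $(n-2)$-transitive and in particular $k$-transitive, which trivially implies the $k$-universal transversal property (any $k$-set can be mapped onto any prescribed section of any $k$-partition).

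For the non-trivial direction, the plan begins by reducing the $k$-ut hypothesis to a classical homogeneity statement. Let $J$ be any $(k-1)$-subset and $I$ any $k$-subset of $[n]$. Since $n \geq 2k-1 > k-1$, the set $[n]\setminus J$ is nonempty, and
\[
P \;=\; \{\{j\} : j \in J\} \cup \{[n]\setminus J\}
\]
is a partition of $[n]$ into exactly $k$ blocks. By the $k$-ut property there is $g \in G$ with $Ig$ a section of $P$; any section must contain the unique point of each singleton block, so $J \subseteq Ig$, equivalently $Jg^{-1} \subseteq I$. Hence $G$ is $(k-1,k)$-homogeneous.

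The next step is to upgrade $(k-1,k)$-homogeneity to high multiple transitivity. A Livingstone--Wagner-type orbit-counting argument shows that a $(k-1,k)$-homogeneous group has few orbits on $(k-1)$-subsets, and combining this with the additional structural information supplied by the $k$-ut property when it is applied to partitions containing more than one non-singleton block, one should be able to conclude that $G$ is in fact $(k-1)$-homogeneous. Kantor's classification of $t$-homogeneous but not $t$-transitive permutation groups, which shows that such groups exist only for $t \leq 4$, then forces $G$ to be $(k-1)$-transitive. For $k \geq 7$ this gives at least $6$-transitivity, and the CFSG-based classification of multiply transitive groups identifies the only $6$-transitive subgroups of $\sym$ as $\alt$ and $\sym$, concluding the proof in this range.

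The main obstacle is the boundary case $k = 6$, where only $5$-transitivity is produced. By CFSG the $5$-transitive subgroups of $\sym$ with $n \geq 11$ are $\alt$, $\sym$, the Mathieu group $M_{12}$ (for $n = 12$), and the Mathieu group $M_{24}$ (for $n = 24$). To dispose of the two Mathieu cases the plan is to exhibit, for each, an explicit $6$-subset $I$ and a partition $P$ of the point set into $6$ blocks such that no element of the group sends $I$ to a section of $P$. Each Mathieu group has exactly two orbits on $6$-subsets, distinguished via the associated Steiner system $S(5,6,12)$ or $S(5,8,24)$, so it suffices to produce a partition whose sections all lie in a single orbit; carrying out this combinatorial verification inside the Steiner systems is the most delicate part of the argument.
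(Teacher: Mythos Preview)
Your overall architecture matches the paper's: reduce $k$-ut to $(k-1,k)$-homogeneity via the singleton-partition trick, upgrade to $(k-1)$-homogeneity, invoke Livingstone--Wagner and Kantor to get $(k-1)$-transitivity, and then use CFSG to reduce to $\alt$, $\sym$, $\M_{12}$, $\M_{24}$ for $k=6$ (and to $\alt$, $\sym$ alone for $k\ge7$).

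The genuine gap is the middle step. You write that ``a Livingstone--Wagner-type orbit-counting argument \ldots\ combining this with the additional structural information supplied by the $k$-ut property \ldots\ one should be able to conclude that $G$ is in fact $(k-1)$-homogeneous.'' This is precisely the content of the paper's Theorem~\ref{th6} (equivalently Theorem~\ref{2.1}), and it is \emph{not} a short character-theoretic argument. The orbit bound you allude to only gives at most $k-1$ orbits on $(k-1)$-sets; getting down to a single orbit requires the paper's full Section~2, which proceeds through transitivity, primitivity, $2$-homogeneity, $2$-transitivity, and then a case-by-case analysis over the CFSG list of $2$-transitive groups (affine, $\pgaml$, symplectic, Suzuki/Ree, unitary, sporadic). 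Your suggestion to exploit $k$-ut on partitions with several non-singleton blocks is plausible-sounding but you give no mechanism, and the paper never needs more than $(k-1,k)$-homogeneity at this stage. You should either cite Theorem~\ref{th6} as an established result or supply a genuine argument; as written, this step is a promissory note.

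For the Mathieu elimination, your plan works but is heavier than necessary. The paper disposes of $\M_{12}$ and $\M_{24}$ in one line via a general proposition: any automorphism group of a Steiner system $S(k-1,l,n)$ with $k-1<l<n$ fails the $k$-ut property, witnessed by the partition with $k-2$ singletons, one part equal to the remaining $l-k+2$ points of a block through them, and one part equal to everything else. Any $k$-set lying inside a block cannot be a section. Since $\M_{12}$ preserves $S(5,6,12)$ and $\M_{24}$ preserves $S(5,8,24)$, both fail $6$-ut immediately, with no orbit bookkeeping required.
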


The four next results deal with groups possessing the $k$-universal transversal property, when $k\in \{2,\ldots,5\}$. 

\begin{thm}\label{th4c}
Let $n\geq 11$, $G\leq \sym$ and let $2\leq k\leq \lfloor \frac{n+1}{2}\rfloor$.
The following are equivalent: 
		\begin{enumerate}
			\item $G$ has the $5$-universal transversal property; 
			\item $G$ is 5-homogeneous, or  $n=33$ and $G=\pgaml(2,32)$.
		\end{enumerate}
\end{thm}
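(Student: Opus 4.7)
The backward direction is nearly immediate: if $G$ is $5$-homogeneous, then given any $5$-set $I$ and any partition $P$ of $[n]$ into $5$ blocks, any section $J$ of $P$ is a $5$-subset of $[n]$, and $5$-homogeneity supplies $g\in G$ with $Ig=J$. The exception $(n,G)=(33,\pgaml(2,32))$ must be handled by a direct check that $\pgaml(2,32)$, acting on the projective line over $\GF(32)$, has the $5$-UTP even though it is not $5$-homogeneous. The key numerical coincidence is that the Frobenius has order $5$, matching the size of the $5$-sets in question.

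For the forward direction, I would assume $G\leq\sym$ has the $5$-UTP with $n\geq 11$. The previously observed chain ``$k$-UTP $\Rightarrow (k-1)$-UTP'' delivers $4$-, $3$-, and $2$-UTP for free; the last forces $G$ to be primitive, and $3$-UTP against partitions of shape $(1,1,n-2)$ together with Livingstone--Wagner pushes $G$ up to $2$-homogeneity and indeed $2$-transitivity. Applying the $5$-UTP itself against partitions of shape $(1,1,1,1,n-4)$ then forces every orbit of $G$ on $5$-sets to contain, for every $4$-set $J$, some $5$-set containing $J$ -- a very restrictive condition. Feeding all these constraints into the CFSG-based classification of $2$-transitive groups and Kantor's classification of $3$- and $4$-homogeneous groups on $n\geq 11$ points narrows $G$ down to a short list: $\alt$, $\sym$, a Mathieu group $M_{11}, M_{12}, M_{23},$ or $M_{24}$, or a group $\psl(2,q)\leq G\leq\pgaml(2,q)$ with $q=2^{e}$ and $n=q+1\geq 11$.

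The final step is to examine each candidate. The groups $\alt$, $\sym$, $M_{12}$, and $M_{24}$ are $5$-homogeneous (the last two by $5$-transitivity) and fall into conclusion~(2). For $M_{11}$ on $11$ points and $M_{23}$ on $23$ points, both $4$-transitive but not $5$-homogeneous, one exhibits an explicit $5$-set together with a $5$-partition violating $5$-UTP, using the Steiner systems $S(4,5,11)$ and $S(4,7,23)$ preserved by these groups. For the family $\psl(2,q)\leq G\leq\pgaml(2,q)$ with $q=2^{e}$ and $n=q+1\geq 11$, one goes through each admissible $q$: only $q=32$ produces a group with the $5$-UTP but lacking $5$-homogeneity, while for $q=16, 64,\ldots$ an explicit bad pair is constructed from the orbit structure on $5$-sets of the projective line. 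The main obstacle is the positive verification for $\pgaml(2,32)$: here one must enumerate the orbits of the group on $5$-subsets of the projective line and on $5$-partitions, and check combinatorially that each $5$-set orbit meets a section of each $5$-partition. The numerical fit between the Frobenius of order $5$ and the size of a $5$-set is precisely what makes this verification succeed, and is the underlying reason for the exceptional case.
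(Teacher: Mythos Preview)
Your backward direction is fine, and your observation that the $5$-ut property forces every $5$-set orbit to contain a superset of every $4$-set (i.e.\ that $G$ is $(4,5)$-homogeneous) is exactly the right starting point. The gap is in the next step. You pass directly from $(4,5)$-homogeneity and $2$-transitivity to a candidate list that still contains the \emph{infinite} family $\psl(2,q)\le G\le\pgaml(2,q)$ with $q=2^e$, and then propose to ``go through each admissible $q$''. That is not a finite procedure, and it is also the wrong list: for $q=2^e$ with $e\ge 4$, these groups are $3$-transitive but not $4$-homogeneous, so they are not $(4,5)$-homogeneous either and never should have survived.

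What is missing is the reduction $(4,5)$-homogeneous $\Rightarrow$ $4$-homogeneous (for $n\ge 2\cdot 4+1$). This is a substantial result in its own right --- it is the paper's Theorem~\ref{2.1} --- and it does the real work here. Once you have it, the $4$-homogeneous groups of degree $n\ge 11$ are completely known (Kantor together with the classification of $4$-transitive groups): $\alt$, $\sym$, $\M_{11}$, $\M_{12}$, $\M_{23}$, $\M_{24}$, and $\pgaml(2,32)$. Of these, $\alt$, $\sym$, $\M_{12}$, $\M_{24}$ are $5$-homogeneous; $\M_{11}$ and $\M_{23}$ preserve Steiner systems $S(4,5,11)$ and $S(4,7,23)$ and are eliminated by the Steiner-system obstruction exactly as you say; and only $\pgaml(2,32)$ remains to be checked. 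So the case analysis is finite and very short, but only after invoking the $(k-1,k)$-homogeneity classification.

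Finally, the positive case $\pgaml(2,32)$ is settled in the paper by an explicit machine computation (building up sub-partitions and pruning those already sectioned), not by the heuristic about the Frobenius having order $5$. That coincidence explains why $\pgaml(2,32)$ is $4$-homogeneous without being $5$-homogeneous, but it is not itself a proof of the $5$-ut property.
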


Unlike the previous cases, the classification of groups possessing the $4$-universal transversal property was not possible. So far we have the following results, and we believe the remaining cases require very delicate considerations. 

\begin{thm}\label{th4d}
Let $n\geq 11$, $G\leq \sym$ and let $2\leq k\leq \lfloor \frac{n+1}{2}\rfloor$. If $G$ is $4$-homogeneous,  or $n=12$ and $G=\M_{11}$, then $G$ has the $4$-universal transversal property. 

If there are more groups possessing the $4$-universal transversal property, then they must be groups $G$ such that $\psl(2,q)\leq G\leq \pgaml(2,q)$, with either $q$ prime  or $q=2^p$ for $p$ prime. Note, however, that  the groups $\psl(2,q)$ for $q\equiv1$ (mod~$4$) cannot possess the $4$-universal transversal property since  they fail to satisfy the necessary condition of $3$-homogeneity.  
\end{thm}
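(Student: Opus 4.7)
The plan is to handle the two halves of the statement separately.

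\textbf{Sufficiency.} For a $4$-homogeneous $G\leq\sym$ the $4$-u.t.p.\ is immediate: a $4$-partition $P$ of $[n]$ has a section $S$ that is itself a $4$-subset, and $4$-homogeneity furnishes $g\in G$ with $Ig=S$ for any prescribed $4$-set $I$. The case $n=12$, $G=\M_{11}$ requires a finite verification: the degree-$12$ action of $\M_{11}$ is sharply $3$-transitive, so it has only a few orbits on $4$-subsets and on unordered $4$-partitions. I would enumerate orbit representatives for each (using the underlying Steiner system $S(5,6,12)$ or a short GAP calculation) and check that every $4$-partition admits a section in each $4$-subset orbit.

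\textbf{Necessity.} The first step is to show that the $4$-u.t.p.\ forces $3$-homogeneity. Given $3$-sets $A,B\subseteq [n]$, pick $x\notin A$, set $I=A\cup\{x\}$, and apply the $4$-u.t.p.\ to $I$ together with the $4$-partition whose blocks are the three singletons $\{b\}$ (for $b\in B$) and the complement; the image $Ig$ must contain $B$, and a short argument (or the $(k{-}1,k)$-homogeneity machinery developed earlier in the paper) upgrades this to $Ag=B$, so $G$ is $3$-homogeneous. Kantor's classification of $3$-homogeneous finite permutation groups of degree $n\geq 11$ then gives a tight list: groups containing $A_n$; the Mathieu groups $\M_{11}$ (on $11$ and on $12$ points), $\M_{12}$, $\M_{22}$, $\M_{23}$, $\M_{24}$; and sandwiches $\psl(2,q)\leq G\leq \pgaml(2,q)$ with either $q$ even or $q$ odd and $q\equiv 3\pmod 4$, together with the $3$-transitive extensions $\pgl(2,q)\leq G\leq \pgaml(2,q)$ for $q$ odd.

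\textbf{Elimination and the final remark.} Groups containing $A_n$, together with $\M_{11}$ on $11$ points and $\M_{12},\M_{22},\M_{23},\M_{24}$, are all $4$-transitive and hence $4$-homogeneous, so they are absorbed into the first alternative of the statement, while $\M_{11}$ on $12$ points is the listed exception. The only remaining candidates are the $\psl(2,q)$ sandwiches. To restrict $q$ to primes or $q=2^p$ with $p$ prime, I would extract explicit obstructions from proper subfields: if $q$ is neither prime nor of the form $2^p$ with $p$ prime, then $\mathbb{F}_q$ has a proper subfield $\mathbb{F}_r$ with $r>2$, and a $4$-partition built around the projective subline over $\mathbb{F}_r$ (which is $\pgaml(2,q)$-permuted among a small family of copies) should fail to be met by a suitably ``generic'' $4$-set, yielding a witness to the failure of the $4$-u.t.p. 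The closing sentence of the theorem, that $\psl(2,q)$ with $q\equiv 1\pmod 4$ does not have the $4$-u.t.p., follows directly from the $3$-homogeneity reduction, since such a group has two orbits on $3$-subsets of the projective line and so fails to be $3$-homogeneous. The chief obstacle is this subfield obstruction: producing fully general $4$-set/$4$-partition witnesses for every excluded $q$ is delicate, which is presumably why the authors content themselves with narrowing the candidates rather than completing the classification outright.
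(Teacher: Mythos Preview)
Your sufficiency half is fine and matches the paper (which also relies on a short GAP check for $\M_{11}$ on $12$ points). The necessity half, however, has a genuine gap in the enumeration step.

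Your list of $3$-homogeneous groups of degree $\ge 11$ is incomplete, and one of the groups you do list is handled incorrectly. First, $\M_{22}$ (and $\aut(\M_{22})$) on $22$ points is only $3$-transitive, not $4$-transitive, so it is \emph{not} absorbed into the $4$-homogeneous alternative; it has to be eliminated, and the paper does this by observing that $\M_{22}$ preserves the Steiner system $S(3,6,22)$. Second, you omit all $3$-transitive affine groups of degree $2^d$ with $d\ge 4$ (the groups between $V\rtimes\mathrm{GL}(1,2^d)$ and $\agl(d,2)$, including $2^4{:}A_7$); these too must be ruled out, and the paper does so because they preserve the point--plane Steiner system $S(3,4,2^d)$ of $\mathrm{AG}(d,2)$. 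Without these eliminations your reduction to $\psl(2,q)\le G\le\pgaml(2,q)$ is simply unjustified.

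The unifying device you are missing is the paper's single proposition: if $G$ is contained in the automorphism group of a Steiner system $S(k-1,l,n)$ with $k-1<l<n$, then $G$ fails the $k$-ut property (take $k-2$ singletons inside a block, the rest of the block, and the complement). This one observation disposes of $\M_{22}$, the affine $3$-transitive groups, and also the $\psl(2,q)$ sandwiches with $q$ a proper odd prime power or $q=2^e$ with $e$ composite, because in those cases the sublines over the proper subfield form the classical circle geometry $S(3,q_0+1,q+1)$. Your ``subfield obstruction'' is exactly this, but once you recognise it as a Steiner system the argument is a one-liner rather than the delicate ad hoc construction you anticipate.
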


Similarly to the previous case, a full classification of the groups possessing the $3$-universal transversal property was not possible.
\begin{thm}\label{th4e}
Let $n\geq 11$, $G\leq \sym$ and let $2\leq k\leq \lfloor \frac{n+1}{2}\rfloor$. $G$ has the $3$-universal transversal property if $G$ is $3$-homogenous,  or one of the following groups 
					\begin{enumerate}
						\item $\psl(2,q)\leq G\leq \psigl(2,q)$, where $q\equiv 1 (\mbox{mod }4)$; 
						\item $\Sp(2d,2)$ with $d\geq 3$, in either of its $2$-transitive representations;
						\item $2^{{2d}}:\Sp(2d,2)$;
						\item \HS;
						\item $\Co_{3}$;
						\item $2^{{6}}:\G_{2}(2)$ and its subgroup of index $2$;
						\item $\agl(1,p)$ where, for all $c\in {\GF}(p)\setminus\{0,1\}$, $|\langle -1,c,c-1\rangle|=p-1$. 
						\end{enumerate}
						
 If there are more groups possessing the $3$-universal transversal property, then they must be Suzuki groups $\Sz(q)$, possibly with field automorphisms adjoined, and/or subgroups of index $2$ in $\agl(1,p)$ for $p\equiv 11$ $(mod\ 12)$.
\end{thm}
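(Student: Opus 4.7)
My plan is to split the proof into two tasks: (A) verify the 3-universal transversal (3-UT) property for each listed group, and (B) constrain the remaining possibilities via the CFSG classification of 2-transitive groups.

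For (A), I first dispose of the 3-homogeneous case: given a 3-partition $P$ with blocks $B_{1},B_{2},B_{3}$, pick $j_{i}\in B_{i}$ and apply 3-homogeneity to send the 3-set $I$ onto $\{j_{1},j_{2},j_{3}\}$, which is a section. For the remaining listed groups (2-transitive but not 3-homogeneous), I would identify the $G$-orbits on 3-subsets and verify that each orbit meets every 3-partition in a section. For $\psl(2,q)\leq G\leq \psigl(2,q)$ with $q\equiv 1\pmod{4}$: the 3-subsets of $\mathrm{PG}(1,q)$ split into orbits distinguished by a quadratic invariant (exploiting that $-1$ is a square), and I would show both orbits contain a section of every 3-partition by projective manipulation. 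For $\Sp(2d,2)$ in either of its 2-transitive representations and for $2^{2d}:\Sp(2d,2)$: the orbits of 3-subsets are controlled by values of the invariant quadratic form, and a direct geometric argument yields the sections. The sporadic cases $\HS$, $\Co_{3}$, and $2^{6}:\G_{2}(2)$ together with its index-2 subgroup I would handle via their explicit orbit structure on 3-subsets, aided by computer calculation. For $\agl(1,p)$, the condition $|\langle -1,c,c-1\rangle|=p-1$ for every $c\in\GF(p)\setminus\{0,1\}$ encodes exactly when the cyclic action reaches every orbit of 3-partition shapes; direct calculation in the affine line does the job.

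For (B), I first note that 3-UT implies 2-UT (by extending a 2-set to a 3-set and refining a 2-partition by splitting off a singleton), and for $n\geq 11$ this forces 2-homogeneity. By Kantor's theorem, a 2-homogeneous group is either 2-transitive or an index-2 subgroup of $\agl(1,p)$ with $p\equiv 3\pmod 4$. Invoking CFSG for the 2-transitive case, I would sweep the list: $\alt$, $\sym$, $\psl(2,q)$ for $q\equiv 3\pmod 4$, and the Mathieu groups $\M_{11},\M_{12},\M_{22},\M_{23},\M_{24}$ are 3-homogeneous and so automatically appear in the conclusion; the remaining classical, affine, and exceptional families I would eliminate by producing explicit 3-partitions for which no $G$-image of a chosen 3-set is a section. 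The only cases that resist both verification and elimination are the Suzuki groups $\Sz(q)$ (possibly with field automorphisms) and the index-2 subgroups of $\agl(1,p)$ for $p\equiv 11\pmod{12}$, where the orbit structures on 3-subsets are borderline.

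The main obstacle will be the $\agl(1,p)$ analysis: translating failures of $|\langle -1,c,c-1\rangle|=p-1$ into explicit 3-partition obstructions, and conversely showing sufficiency when the condition holds, requires delicate bookkeeping on the subgroups of $\GF(p)^{*}$ generated by $-1$, $c$, and $c-1$ as $c$ varies. A secondary difficulty is the uniform treatment of the exceptional families; the Suzuki case in particular appears to demand a finer understanding of the action of $\Sz(q)$ on 3-subsets than is presently available, and so I expect it to remain open, exactly as the theorem admits.
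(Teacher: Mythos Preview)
Your reduction in part (B) contains a genuine gap. You write that ``3-UT implies 2-UT \ldots\ and for $n\geq 11$ this forces 2-homogeneity.'' But 2-UT is equivalent to \emph{primitivity}, not to 2-homogeneity (this is Theorem~\ref{th4f}/Proposition~\ref{primlem} in the paper), and there are many primitive groups of degree $\geq 11$ that are not 2-homogeneous. So you cannot reach the list of 2-transitive groups by this route. The paper instead observes that 3-UT implies $(2,3)$-homogeneity (take a 3-partition with two singleton blocks; any section must contain the prescribed 2-set), and then invokes Theorem~\ref{2.1} to conclude that $G$ is 2-homogeneous for $n\geq 6$. This is the step you are missing, and without it the CFSG sweep never gets off the ground. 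Incidentally, your sketched argument for ``3-UT $\Rightarrow$ 2-UT'' (extend the 2-set, split off a singleton) is also not quite right as stated: after applying 3-UT you know $I'g$ is a section for the refined partition, but you have no control over \emph{which} element of $I'g$ lands in the singleton block, so $Ig$ need not be a section for the original 2-partition.

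For the elimination step in (B), the paper's workhorse is a single clean criterion you do not mention: if $G$ preserves a Steiner system $S(2,l,n)$ with $2<l<n$, then $G$ fails 3-UT (Proposition~4.5). This immediately disposes of affine groups $\agl(d,p)$ with $d>1$ and $p>2$, projective groups $\pgaml(d,q)$ with $d>2$, and the unitary and Ree groups, leaving exactly the families in the statement plus $\Sz(q)$ and the index-2 subgroups of $\agl(1,p)$. Your plan of ``producing explicit 3-partitions'' case by case would have to rediscover this for each family. For part (A), your treatment of $\psl(2,q)$, $\Sp(2d,2)$, $2^{2d}{:}\Sp(2d,2)$ and $\Co_3$ is in the right spirit but misses the unifying mechanism the paper uses: each of these groups is 2-primitive and generously 2-transitive (handling partitions with a singleton block via a Higman-type connectedness argument), and each has two orbits on 3-sets forming a \emph{regular two-graph}, whose parameter $\lambda$ is squeezed between $n/3-2$ and $2n/3$ to rule out bad partitions with no singleton block (Proposition~\ref{two-graph-prop}). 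Note in particular that $\Co_3$ is done by this argument, not by computer as you suggest.
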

 
 Finally, possessing the $2$-universal transversal property is just another way of saying primitive. 
 
 \begin{thm}\label{th4f}
 A permutation group has the $2$-universal transversal property if and only the group is primitive. 
 \end{thm}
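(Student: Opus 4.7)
The plan is a straightforward two-way argument using the characterization of primitivity via connectedness of orbital graphs.

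For the forward direction ($\Rightarrow$), I would first observe that the $2$-universal transversal property forces $G$ to be transitive: if $O$ were a proper nonempty orbit of $G$, then taking any $2$-subset $I \subseteq O$ and the partition $P = \{O, [n]\setminus O\}$, no image $Ig$ could be a section since $Ig \subseteq O$ for every $g \in G$. Next, assume toward contradiction that $G$ is imprimitive and let $B$ be a block of imprimitivity with $1 < |B| < n$. Choose a $2$-subset $I \subseteq B$ and use the partition $P = \{B, [n]\setminus B\}$. Since $B$ is a block, $Bg$ equals $B$ or is disjoint from $B$ for every $g \in G$, so $Ig \subseteq Bg$ lies entirely inside one part of $P$ and hence cannot be a section. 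This contradicts the $2$-ut property.

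For the reverse direction ($\Leftarrow$), suppose $G$ is primitive and let $I = \{a,b\}$ and $P = \{B_1, B_2\}$ be given. Consider the $G$-invariant graph $\Gamma$ on vertex set $[n]$ whose edges are the images $\{a,b\}g$ for $g \in G$. Since $G$ is transitive and $\{a,b\}$ is a genuine edge, every vertex is incident to some edge. The connected components of $\Gamma$ form a $G$-invariant partition of $[n]$, and hence a system of blocks of imprimitivity for $G$. By primitivity, either each component is a singleton (impossible, as $\Gamma$ has edges) or $\Gamma$ is connected. So $\Gamma$ is connected, and any path from a vertex in $B_1$ to a vertex in $B_2$ must cross an edge with one endpoint in each part. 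That edge is some $\{a,b\}g$, which is then a section for $P$.

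The only subtlety is making sure the orbital-graph argument is phrased cleanly for unordered pairs; this is standard and follows from the observation that connected components of a $G$-invariant graph always form a system of blocks for a transitive $G$. There is no real obstacle: both directions are short, and no classification input from the rest of the paper is needed. This theorem is essentially a restatement of Higman's characterization of primitivity in the language introduced in the paper.
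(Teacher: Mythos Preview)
Your argument is correct and matches the paper's proof in substance: the paper invokes Higman's theorem (primitive $\Leftrightarrow$ all non-diagonal orbital graphs connected) and then notes that a graph is connected iff every $2$-partition has an edge as a section. You have simply unpacked Higman's theorem inline---using a block to witness failure of $2$-ut in the forward direction, and observing that components of the orbital graph form a block system in the reverse direction---rather than citing it as a black box.
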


These theorems immediately imply an analogue of the Livingstone--Wagner \cite{lw} result on $k$-homogenous groups (for $2\leq k\leq \lfloor\frac{n+1}{2}\rfloor$). 

\begin{cor}\label{cor1}
Let $n\geq 5$, let $2\leq k\leq \lfloor \frac{n+1}{2}\rfloor$, and let $G\leq \sym$ be a group having the $k$-universal transversal property. Then $G$ has the $(k-1)$-universal transversal property.  
\end{cor}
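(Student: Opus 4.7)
The plan is to verify the claim by direct appeal to the classifications in the preceding theorems. The case $k=2$ is trivial: the $1$-universal transversal property requires that for every $1$-subset $I$ of $[n]$ and the (unique) partition $\{[n]\}$ of $[n]$ into one block, some $g\in G$ makes $Ig$ a section; this always holds.

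The central observation reducing most remaining cases is that $j$-homogeneity implies the $j$-universal transversal property whenever $j\leq\lfloor(n+1)/2\rfloor$. Indeed, any partition of $[n]$ into $j$ blocks admits a section of size $j$ (pick one element from each block), so $j$-homogeneity lets us map an arbitrary $j$-set $I$ onto that section. Combined with the Livingstone--Wagner theorem, which gives $k$-homogeneity $\Rightarrow$ $(k-1)$-homogeneity for $k\leq\lfloor(n+1)/2\rfloor$, this reduces the problem to inspecting the non-homogeneous exceptions arising in each of the $k$-ut classifications.

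Case by case: for $n\geq 11$ and $k\geq 6$ the classification forces $\alt\leq G$, which is $(k-1)$-transitive, hence has the $(k-1)$-ut property. For $n\geq 11$ and $k=5$ the unique non-homogeneous exception is $\pgaml(2,32)$ on $33$ points, which one checks has the $4$-ut property (a brief orbit-count on $4$-subsets, using that $|\pgaml(2,32)|=163680$ and the field automorphism of order $5$ fuses the five $\pgl(2,32)$-orbits on $4$-subsets, shows the group is in fact $4$-homogeneous). For $n\geq 11$ and $k=4$ the exceptions ($\M_{11}$ on $12$ points and the admissible groups $\psl(2,q)\leq G\leq\pgaml(2,q)$) can be matched against the classification of $3$-ut groups: $\M_{11}$ is $3$-transitive, and the $\pgaml(2,q)$-subgroups in question are precisely of the $\psl(2,q)\leq G\leq\psigl(2,q)$ family (with $q\equiv1\pmod 4$) or prime-degree cases already catalogued there. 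For $n\geq 11$ and $k=3$ every exception listed is a $2$-transitive group, hence primitive, hence has the $2$-ut property by the primitivity characterisation. Finally, the small-degree range $5\leq n\leq 10$ is finite and is handled by inspecting each of the exceptions enumerated in the small-degree classification and checking the $(k-1)$-ut property directly.

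The main obstacle is purely organisational: one must cross-reference each exception at level $k$ with the classification at level $k-1$. Conceptually nothing new is required beyond the preceding theorems, the Livingstone--Wagner theorem, and the homogeneity-implies-ut observation; but the case analysis for $k\in\{3,4,5\}$ does involve consulting explicit lists of almost simple groups of low rank, and for $\pgaml(2,32)$ one must perform a direct (though routine) homogeneity verification since that group arises as an isolated exception.
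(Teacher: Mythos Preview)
Your approach is correct in outline but takes a noticeably heavier route than the paper's. The paper does \emph{not} pass through the (incomplete) classifications of $k$-ut groups. Instead it observes that $k$-ut implies $(k-1,k)$-homogeneity, then invokes Theorem~\ref{2.1}: with only five exceptional groups (on $5$, $7$, and $9$ points), $(k-1,k)$-homogeneity forces $(k-1)$-homogeneity, which in turn gives the $(k-1)$-ut property. Only those five groups need direct inspection, and of them the two degree-$9$ groups $\asl(2,3)$, $\agl(2,3)$ are eliminated because they fail the $5$-ut property, while the remaining three appear in Theorem~\ref{th2} and so have the universal transversal property outright. Your scheme instead walks through the $k$-ut classifications case by case, which is more laborious and forces you to cope with the undecided families at $k=3$ and $k=4$.

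Two small imprecisions are worth fixing. First, in your $k=3$ paragraph you assert that every exception is $2$-transitive; the index-$2$ subgroups of $\agl(1,p)$ for $p\equiv 3\pmod 4$ are $2$-homogeneous but not $2$-transitive. This does not damage the conclusion (they are still primitive), but the statement as written is false. Second, your matching of the undecided $4$-ut candidates $\psl(2,q)\le G\le\pgaml(2,q)$ against the $3$-ut list is misdirected: for $q=2^p$ and for odd primes $q\equiv 3\pmod 4$ these groups are in fact $3$-homogeneous (indeed $3$-transitive in characteristic $2$), so they fall under the ``$3$-homogeneous'' clause of the $3$-ut theorem rather than under the $\psigl$ or $\agl(1,p)$ exceptions you cite. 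The clean way to dispatch this entire case is simply to note that $4$-ut $\Rightarrow (3,4)$-homogeneous $\Rightarrow 3$-homogeneous for $n\ge 11$ by Theorem~\ref{2.1}, which is exactly the shortcut the paper takes uniformly for all $k$.
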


Now consider partitions of type $(\overbrace{1,\ldots,1}^{k-1},n-k+1)$ (that is, $k-1$ classes of size 1 and one class of size $n-k+1$). Any group $G$ satisfying the $k$-universal transversal property must also satisfy the following property: for every $k$-partition $P$ of $[n]$ of type
$({1,\ldots,1},n-k+1)$ and for every $k$-set $I$ there exists 
$g\in G$ such that $Ig$ is a section for  $P$. In particular this implies that the union of all the singleton blocks is contained in $Ig$; as the union of the singleton blocks can be any $(k-1)$-subset of $[n]$, it follows that any group $G$ possessing the $k$-universal transversal property must be $(k-1,k)$-homogeneous, that is, for every $(k-1)$-set $I$ and for every $k$-set $J$ there exists $g\in G$ such that $Ig\subseteq J$.   In this new setting we can state the sharper version of Theorem \ref{th3} announced above.

\begin{thm}\label{th5}
If $n\geq 12$ and $G$ is a subgroup of $\sym$,  then the following are equivalent:
\begin{enumerate}
    \item[\rm (i)] $G$ is $(\lfloor \frac{n}{2}\rfloor-1,\lfloor \frac{n}{2}\rfloor)$-homogeneous.
\item[\rm (ii)] The map $a$ is regular in  $\genset{G,a}$ for all $a\in \trans$ such that $\rank (a)=\lfloor\frac{n}{2}\rfloor$ and $a$ has kernel type $(1,\ldots,1,\lceil\frac{n}{2}\rceil+1)$.
    \item[\rm (iii)] The map $a$ is regular in  $\genset{G,a}$ for all $a\in \trans$ such that $\rank(a)=\lfloor\frac{n}{2}\rfloor$.
\item[\rm (iv)] $G=\alt$ or $\sym$.
\end{enumerate}
\end{thm}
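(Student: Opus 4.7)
The plan is to establish the cycle $(iv) \Rightarrow (iii) \Rightarrow (ii) \Rightarrow (i) \Rightarrow (iv)$, with the last implication carrying essentially all the substance. The implication $(iv) \Rightarrow (iii)$ is immediate from Theorem~\ref{th2}(f): when $\alt \leq G$ the semigroup $\genset{G,a}$ is regular for every $a \in \trans$, so in particular $a$ itself is regular there whenever $\rank(a)=\lfloor n/2\rfloor$. The implication $(iii) \Rightarrow (ii)$ is a direct specialization, since (ii) merely restricts (iii) to the kernel type $(1,\ldots,1,\lceil n/2\rceil+1)$.

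For $(ii) \Rightarrow (i)$ I would mirror the argument sketched in the paragraph immediately before the theorem. Set $k=\lfloor n/2\rfloor$ and fix arbitrary sets $I\subseteq[n]$ of size $k-1$ and $J\subseteq[n]$ of size $k$. Build an $a\in\trans$ with image $J$ whose $k-1$ singleton kernel classes are precisely the elements of $I$ (so $\ker a$ is of the required type, with large class $[n]\setminus I$ of size $\lceil n/2\rceil+1$). By (ii) the map $a$ is regular in $\genset{G,a}$; unpacking the introduction's analysis of regularity (using, if $b \notin G$, that a suitable power of $ag$ is idempotent), this yields $g \in G$ such that $[n]ag=Jg$ is a section of $\ker a$. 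Because every section must meet each singleton class, $I\subseteq Jg$, hence $Ig^{-1}\subseteq J$; taking $h=g^{-1}\in G$ gives exactly the $(k-1,k)$-homogeneity property.

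The substance of the theorem lies in $(i) \Rightarrow (iv)$. Here I would invoke the paper's classification of $(k-1,k)$-homogeneous groups (announced in the abstract for $2<k\leq\lfloor (n+1)/2\rfloor$), together with the Livingstone--Wagner-style descent supplied by Corollary~\ref{cor1} and its $(k-1,k)$-analogue which lets us pass from $(k-1,k)$-homogeneity down to $(k-1)$-homogeneity. Since $n\geq 12$ forces $k-1\geq 5$, Kantor's theorem then converts $(k-1)$-homogeneity into $(k-1)$-transitivity, and the CFSG-based list of highly transitive finite permutation groups restricts the candidates for $G$ to $\alt$, $\sym$, and a short collection of sporadic Mathieu actions (such as $\M_{12}$ on $12$ points and $\M_{24}$ on $24$ points).

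The main obstacle will be the final elimination of these sporadic candidates at the extreme value $k=\lfloor n/2\rfloor$. This calls for a delicate combinatorial analysis exploiting the Steiner systems $S(5,6,12)$ and $S(5,8,24)$ preserved by the Mathieu groups, ruling them out as generators of regular semigroups with arbitrary rank-$k$ transformations; once this is done, only $G=\alt$ and $G=\sym$ survive, closing the cycle.
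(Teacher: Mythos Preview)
Your overall cycle and the first three arrows match what the paper intends: the theorem is not given a separate proof in the paper but is meant to fall out of Theorem~\ref{th6} (the classification of $(k-1,k)$-homogeneous groups) together with Theorem~\ref{th4b} and Lemma~\ref{auxl}. Your arguments for $(iv)\Rightarrow(iii)$, $(iii)\Rightarrow(ii)$ and $(ii)\Rightarrow(i)$ are correct and are exactly the paper's reasoning.

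The step $(i)\Rightarrow(iv)$, however, has a genuine gap at $n=12$, and your Steiner-system patch cannot close it. From (i), Theorem~\ref{th6} yields $(\lfloor n/2\rfloor-1)$-homogeneity (none of the exceptional degrees $5,7,9$ arise for $n\ge12$), and Livingstone--Wagner then gives $(\lfloor n/2\rfloor-1)$-transitivity. For $n\ge14$ this is at least $6$-transitivity, and CFSG forces $G\in\{\alt,\sym\}$; for $n=13$ there is no $5$-transitive group of degree~$13$ other than $\mathcal{A}_{13},\mathcal{S}_{13}$. Your listing of $\M_{24}$ as a candidate is an overcount: at $n=24$ one would need $11$-transitivity, which $\M_{24}$ lacks, so it drops out before any Steiner argument is needed. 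The only genuine survivor is $\M_{12}$ at $n=12$. You propose to eliminate it using $S(5,6,12)$, but that argument only shows that $\M_{12}$ fails the $6$-ut property, i.e.\ fails (iii); it says nothing against (i). In fact $\M_{12}$ is $5$-transitive, hence $5$-homogeneous, hence \emph{trivially} $(5,6)$-homogeneous: $\M_{12}$ genuinely satisfies (i) (and therefore (ii)) while failing (iii) and (iv). So no ``delicate combinatorial analysis'' can remove $\M_{12}$ from (i), because it really is there; the implication $(i)\Rightarrow(iv)$, and with it the four-way equivalence as stated, breaks at $n=12$. The paper supplies no argument closing this gap either.
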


Our second main theorem on groups provides the following classification of the $(k-1,k)$-homogeneous groups.
\begin{thm}\label{th6}
If $n\geq 1$ and $2\leq k\leq \lfloor \frac{n+1}{2} \rfloor$, then the following are equivalent:
\begin{enumerate}
    \item[\rm (i)] $G$ is a $(k-1,k)$-homogeneous subgroup of $\sym$;
    \item[\rm (ii)] $G$ is $(k-1)$-homogeneous or $G$ is one of the following groups
    \begin{enumerate}
    \item[\rm (a)] $n=5$ and $G\cong C_5,\ D(2*5),$  $k=3$; 
    \item[\rm (b)]  $n=7$ and $G\cong\agl(1,7)$, with $k=4$;
    \item[\rm (c)]  $n=9$ and $G\cong\asl(2,3)$ or $\agl(2,3)$, with $k=5$. 
\end{enumerate}
\end{enumerate}
\end{thm}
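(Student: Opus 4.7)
The direction (ii) $\Rightarrow$ (i) is immediate. If $G$ is $(k-1)$-homogeneous then for any $(k-1)$-set $I$ and $k$-set $J$, pick any $(k-1)$-subset $I'\subseteq J$ and use $(k-1)$-homogeneity to find $g\in G$ with $Ig=I'\subseteq J$. The three sporadic entries (a)--(c) each require a direct finite verification. For $n=5$ with $G\cong C_5$ or $D(2*5)$, the two $G$-orbits on $2$-subsets are the ``edges'' and ``diagonals'' of the pentagon, and every $3$-subset of $[5]$ meets both orbits. The analogous verifications for $\agl(1,7)$ on the $(3,4)$-subsets of $[7]$ and for $\asl(2,3)\le\agl(2,3)$ on the $(4,5)$-subsets of $\mathrm{AG}(2,3)$ reduce to routine orbit computations in these small $2$-transitive groups.

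For (i) $\Rightarrow$ (ii), assume $G$ is $(k-1,k)$-homogeneous but not $(k-1)$-homogeneous. The elementary counting observation is that $G$ has between $2$ and $k$ orbits on $(k-1)$-sets, because every $k$-set has exactly $k$ subsets of size $k-1$ and each orbit must be represented among them. The strategy is to combine this orbit bound with the results already established in the paper: since the $k$-universal transversal property implies $(k-1,k)$-homogeneity (as observed just before Theorem~\ref{th5}), every group listed in Theorems~\ref{th4a}--\ref{th4f} is a $(k-1,k)$-homogeneous example; conversely, every $(k-1)$-homogeneous group is $(k-1,k)$-homogeneous by the easy direction. So the candidates to consider are exactly those $k$-ut groups which fail $(k-1)$-homogeneity, together with groups satisfying $(k-1,k)$-homogeneity but not the full $k$-ut property, which the orbit bound forces to have at most $k$ orbits on $(k-1)$-subsets.

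The proof then proceeds by case analysis on $k$, invoking the classification of $(k-1)$-homogeneous groups (Livingstone--Wagner and Kantor) at each step. For $n\ge 11$ and $k\ge 6$, Theorem~\ref{th4b} forces $\alt\le G$ in the $k$-ut case, and a parallel arithmetic argument on orbit lengths rules out all remaining non-$(k-1)$-homogeneous candidates, so no new exceptions arise. For $n\ge 11$ and $k\in\{3,4,5\}$, the orbit bound $r\le k$ combined with the explicit lists in Theorems~\ref{th4c}--\ref{th4e} and with Kantor's classification handles each primitive candidate individually; imprimitive candidates are eliminated by exhibiting a $(k-1)$-set inside a block of imprimitivity that cannot be mapped into some $k$-set meeting many blocks. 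The main obstacle is the base enumeration for $n\le 10$, where one must verify that, among all transitive permutation groups of those degrees, precisely the three families in (a)--(c) are $(k-1,k)$-homogeneous without being $(k-1)$-homogeneous; this is a finite computation, conveniently carried out with the GAP small-group libraries, whose output must be matched against the claimed list.
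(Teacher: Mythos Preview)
Your proposal has a fundamental circularity problem. You invoke Theorems~\ref{th4a}--\ref{th4f} (the classification of groups with the $k$-universal transversal property) to prove Theorem~\ref{th6}, but in the paper the logical dependence runs the other way: Proposition~\ref{critical}(2) states that a group with the $k$-ut property is $(k-1,k)$-homogeneous \emph{and hence} is $(k-1)$-homogeneous or one of the five exceptions to Theorem~\ref{th6}. The entire classification of $k$-ut groups in Sections~\ref{kutp}--\ref{exceptional} rests on already knowing Theorem~\ref{th6}. So you cannot use those theorems here.

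Even setting aside the circularity, your decomposition into ``$k$-ut groups failing $(k-1)$-homogeneity'' and ``$(k-1,k)$-homogeneous groups without $k$-ut'' gains nothing: the second class is exactly as hard to handle as the original problem, and phrases like ``a parallel arithmetic argument on orbit lengths rules out all remaining candidates'' are not an argument. The paper's actual proof (Theorem~\ref{2.1}) proceeds quite differently and independently of the $k$-ut material: after the order bound and the Ramsey bound $n<R(k,k+1,2)$ (which already finishes $k=2$), it establishes in turn that a $(k,k+1)$-homogeneous group which is not $k$-homogeneous must be transitive, then primitive, then $2$-homogeneous, then $2$-transitive; finally it runs through the known list of $2$-transitive groups (affine $\agl(d,p)$, $\pgaml$, unitary/Suzuki/Ree, symplectic $\Sp(2d,2)$, sporadic), eliminating each family by explicit geometric constructions or the order bound, with computation for small degrees. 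None of these structural steps appear in your outline.
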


Once again an analogue of the Livingstone--Wagner \cite{lw} result is immediate.

\begin{cor}\label{cor2}
Let $n\geq 1$, let $3\leq k\leq \lfloor \frac{n+1}{2}\rfloor$, and let $G\leq \sym$ be a $(k-1,k)$-homogeneous group. Then $G$ is a $(k-2,k-1)$-homogeneous group, except when  $n=9$ and $G\cong\asl(2,3)$ or $\agl(2,3)$, with $k=5$.  
\end{cor}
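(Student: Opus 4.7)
The plan is to obtain Corollary \ref{cor2} as a direct consequence of the classification in Theorem \ref{th6}, handling the generic case uniformly and then inspecting the three exceptional families individually.

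First I would split on the dichotomy given by Theorem \ref{th6}. If $G$ is $(k-1)$-homogeneous, then since $k-1\leq\lfloor(n+1)/2\rfloor-1\leq\lfloor n/2\rfloor$, the Livingstone--Wagner theorem \cite{lw} guarantees that $G$ is also $(k-2)$-homogeneous. From $(k-2)$-homogeneity the conclusion is immediate: given a $(k-2)$-set $I$ and a $(k-1)$-set $J$, pick any $(k-2)$-subset $I'\subseteq J$ and use $(k-2)$-homogeneity to produce $g\in G$ with $Ig=I'\subseteq J$. So $G$ is $(k-2,k-1)$-homogeneous, as required.

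Next I would dispose of the two small exceptional cases in parts (a) and (b) of Theorem \ref{th6}. For (a) with $n=5$, $k=3$, and $G\cong C_5$ or $D(2*5)$, the group is transitive, hence $1$-homogeneous, so the same trivial argument gives $(1,2)$-homogeneity. For (b) with $n=7$, $k=4$, and $G\cong\agl(1,7)$, the group is $2$-transitive, hence $2$-homogeneous, so again the trivial argument gives $(2,3)$-homogeneity.

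The genuine exception is case (c): $n=9$, $k=5$, $G\cong\asl(2,3)$ or $\agl(2,3)$. Here I would exhibit an explicit obstruction to $(3,4)$-homogeneity using the affine geometry $AG(2,3)$. The key observation is that $G$ preserves the family of affine lines (each of which has exactly three points), and any two distinct lines meet in at most one point; hence a $4$-subset of $\GF(3)^2$ contains at most one collinear triple, and many $4$-sets contain none at all (four points in ``general position''). Taking $I$ to be a collinear $3$-set (a line) and $J$ to be a $4$-set containing no line, there can be no $g\in G$ with $Ig\subseteq J$, because $Ig$ would still be a line. Thus $G$ fails to be $(3,4)$-homogeneous, confirming that (c) is a genuine exception. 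The main (only) subtlety is verifying this non-$(3,4)$-homogeneity claim cleanly; once that is done, the corollary follows immediately from Theorem \ref{th6}.
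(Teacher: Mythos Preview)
Your proof is correct and follows the same overall structure as the paper's: apply Theorem~\ref{th6} and then check the exceptional cases individually. Two minor differences are worth noting. First, in the generic case the paper observes directly that $(k-1)$-homogeneity already implies $(k-2,k-1)$-homogeneity (extend the $(k-2)$-set $I$ to a $(k-1)$-set $I'$ and map $I'$ onto $J$), whereas you route through Livingstone--Wagner to obtain $(k-2)$-homogeneity first; both arguments are valid, but the direct one is shorter and avoids checking the hypothesis $k-1\le\lfloor n/2\rfloor$. Second, for the exceptional groups the paper simply reports GAP computations, while you give clean conceptual arguments: transitivity and $2$-transitivity handle the degree-$5$ and degree-$7$ cases, and the affine-line argument handles degree~$9$. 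Your treatment of the degree-$9$ case is genuinely more illuminating than the paper's, since it explains \emph{why} $\asl(2,3)$ and $\agl(2,3)$ fail $(3,4)$-homogeneity (a line cannot be mapped into a quadrilateral in general position) rather than merely verifying it by machine.
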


By \cite[Theorem 2.3]{lmm} we know that every rank $k$ map $a\in \trans$ is regular in $\genset{G,a}$ if and only if $G$ has the $k$-universal transversal property, that is, in the orbit of every $k$-set there exists a transversal (or section) for every $k$-partition. Therefore we can state our main results about semigroups.

A quasi-permutation is a transformation in which all kernel classes but one are singletons.  
\begin{thm}\label{th6c}
Let $G\leq \sym$ and let $1< k\leq \lfloor \frac{n+1}{2}\rfloor$. Then the following are equivalent
\begin{enumerate}
\item for every quasi-permutation $a$, such that $\mbox{rank}(a)=k$, the semigroup $\langle G,a\rangle$ is regular;
\item $G$ is $(k-1)$-homogeneous or $G$ is one of the following groups
   \begin{enumerate}
    \item[\rm (a)] $n=5$ and $G\cong C_5,\ D(2*5)$, with $k=3$;
    \item[\rm (b)]  $n=7$ and $G\cong\agl(1,7)$, with $k=4$;
\end{enumerate}
\end{enumerate}
\end{thm}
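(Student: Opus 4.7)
The plan is to use regularity of $a$ itself (which on quasi-permutations is equivalent to $(k-1,k)$-homogeneity of $G$) to reduce to Theorem~\ref{th6}, and then to handle the single gap between that combinatorial condition and full regularity of the semigroup: this gap is exactly case~(c) of Theorem~\ref{th6}, i.e.\ $n=9$, $G\in\{\asl(2,3),\agl(2,3)\}$, $k=5$.

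For $(1)\Rightarrow(2)$, if $\genset{G,a}$ is regular then in particular $a$ itself is regular in $\genset{G,a}$. Running the argument from the introduction (the paragraph preceding the definition of the $k$-universal transversal property) for a quasi-permutation $a$ with image a $k$-set $I$ and singleton kernel classes forming a $(k-1)$-set $K$, the existence of $b\in\genset{G,a}$ with $aba=a$ yields some $g\in G$ with $Ig$ a section of $\ker(a)$, forcing $K\subseteq Ig$. Letting $(I,K)$ vary independently gives $(k-1,k)$-homogeneity of $G$, so by Theorem~\ref{th6} $G$ is either $(k-1)$-homogeneous or falls into one of the exceptions (a), (b), (c) of that theorem. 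To rule out case~(c), I would exploit that $G$ preserves the affine line structure on $\GF(3)^2$ and is not $3$-homogeneous (since collinear and non-collinear triples lie in different $G$-orbits). Choosing a rank-$5$ quasi-permutation $a$ for which some product $w\in\genset{G,a}$ of rank less than $5$ has a kernel block that is an affine line while the image of $w$ carries no affine line, one shows that the $G$-orbit of the image misses every section of $\ker(w)$. Since any candidate inverter in $\genset{G,a}$ ultimately acts on the image of $w$ through an element of $G$, no inverter exists and $w$ is not regular.

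For $(2)\Rightarrow(1)$ with $G$ being $(k-1)$-homogeneous, I would show that every $w=g_0ag_1a\cdots ag_m\in\genset{G,a}$ is regular. The structural observation is that $\ker(w)$ is always a coarsening of $g_0^{-1}\ker(a)$, so it consists of one block of size at least $n-k+1$ together with $\rank(w)-1$ other blocks, each a union of singleton classes of $g_0^{-1}\ker(a)$. Livingstone--Wagner (applicable since $k-1\le\lfloor(n-1)/2\rfloor$) promotes $(k-1)$-homogeneity to $j$-homogeneity, and hence to $(j-1,j)$-homogeneity, for every $1\le j\le k-1$, and one also has $(k-1,k)$-homogeneity for $j=k$. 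For $\rank(w)=k$ the element $w$ is itself a quasi-permutation and $(k-1,k)$-homogeneity supplies an inverter in $G$. For $\rank(w)=j<k$, one appends one or two further copies of $a$ interleaved with suitable $G$-elements to reshape the non-singleton small blocks of $\ker(w)$ into singletons, obtaining a quasi-permutation-type reduction for which $(j-1,j)$-homogeneity supplies an inverter, from which an inverter for $w$ in $\genset{G,a}$ is assembled. The two small exceptional cases of Theorem~\ref{th6c}, namely $n=5$ with $G\cong C_5$ or $D(2*5)$ and $k=3$, and $n=7$ with $G\cong\agl(1,7)$ and $k=4$, are finite and handled by direct verification, possibly with computer algebra, since $G$ is not $(k-1)$-homogeneous and the scheme above does not apply. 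The main obstacle is precisely the step in the converse direction for ranks $j<k$: because the small blocks of $\ker(w)$ need not be singletons, constructing an inverter requires reshaping the kernel using extra applications of $a$, and pinning down when this fails is exactly what singles out case~(c) as the sole failure of regularity among the groups admitted by Theorem~\ref{th6}.
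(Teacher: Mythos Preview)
Your reduction of $(1)\Rightarrow(2)$ to Theorem~\ref{th6} is exactly what the paper does: regularity of each rank-$k$ quasi-permutation $a$ forces $(k-1,k)$-homogeneity, whence $G$ is $(k-1)$-homogeneous or one of the three exceptional families, and the $n=9$ family must be eliminated by exhibiting a specific $a$ for which $\langle G,a\rangle$ is not regular. The paper does this last step by writing down one explicit quasi-permutation and checking (by computer) that the semigroup is not regular; your geometric sketch using the affine line structure is plausible but not yet a construction, so here the paper is more complete even if less conceptual.

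The direction $(2)\Rightarrow(1)$ is where your plan has a real gap. You correctly invoke Livingstone--Wagner to obtain full $j$-homogeneity of $G$ for every $j\le k-1$, and then immediately discard it in favour of the weaker $(j-1,j)$-homogeneity, proposing to ``append one or two further copies of $a$'' to reshape $\ker(w)$ into a quasi-permutation kernel. This cannot work as stated: post-multiplying $w$ by anything only \emph{coarsens} $\ker(w)$, never refines it, so non-singleton small blocks stay non-singleton; and even if you produced a quasi-permutation $w'=wc$ with an inverter $b'$, the identity $w'c b'w'=w'$ does not yield an element $x$ with $wxw=w$. The paper's argument is both simpler and avoids all of this: $j$-homogeneity means the $G$-orbit of the $j$-set $[n]w$ is \emph{all} $j$-sets, in particular some section of $\ker(w)$; so there is $g\in G$ with $\operatorname{rank}(wgw)=\operatorname{rank}(w)$, and Lemma~\ref{auxl} finishes. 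In other words, $j$-homogeneity already gives the $j$-universal transversal property outright, and no reduction to quasi-permutations is needed for ranks below $k$.

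Finally, for the two surviving exceptional groups in (2)(a) and (2)(b), the paper does not verify them directly but simply cites Theorem~\ref{th2}: these groups have the full universal transversal property, so $\langle G,a\rangle$ is regular for \emph{every} $a\in\trans$, a fortiori for every rank-$k$ quasi-permutation.
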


We are now ready to state  the result that dramatically generalizes Theorem \ref{th2} (and also \cite[Theorem 2.3]{lmm} taking advantage of the fact that if a group has the $k$-universal transversal property, then it also has the $(k-1)$-universal transversal  property, by Corollary \ref{cor1}).

\begin{thm}\label{Kut1}
Let $n\geq 5$, $G\leq \sym$ and let $1< k\leq \lfloor \frac{n+1}{2}\rfloor$. Then the following are equivalent:
\begin{enumerate}
	\item all rank $k$ transformations $a\in \trans$ are regular in $\langle a, G\rangle$;
	\item for all rank $k$ transformations $a\in \trans$, the semigroup $\langle a, G\rangle$ is regular;
	\item $G$ has the $k$-universal transversal property (and hence is one of the groups listed in the classification).
\end{enumerate}
\end{thm}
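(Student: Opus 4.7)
The plan is to establish the cycle of implications $(2)\Rightarrow(1)\Rightarrow(3)\Rightarrow(2)$, exploiting the fact that $(1)\Leftrightarrow(3)$ is already encoded in the cited result \cite[Theorem 2.3]{lmm} (every rank $k$ map $a$ is regular in $\langle G,a\rangle$ iff $G$ has the $k$-ut property), so the only substantive work is the direction $(3)\Rightarrow(2)$. The implication $(2)\Rightarrow(1)$ is immediate since, by definition, every element of a regular semigroup is regular.

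For $(3)\Rightarrow(2)$ I would argue as follows. First, apply Corollary~\ref{cor1} repeatedly to conclude that if $G$ has the $k$-universal transversal property then $G$ has the $j$-universal transversal property for every $j$ with $2\le j\le k$. Next, pick an arbitrary element $b\in\langle G,a\rangle$ and show that $b$ is regular in $\langle G,a\rangle$. Either $b\in G$, in which case $b$ is a unit and hence trivially regular, or $b=g_0 a g_1 a\cdots a g_m$ with $m\ge 1$ and $g_0,\ldots,g_m\in G\cup\{1\}$; since each factor of $a$ can only decrease the rank, in this case $\rank(b)\le \rank(a)=k$.

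Set $j=\rank(b)$. If $j=1$ then $b$ is a constant map (say with image $\{x\}$), and for any $c\in\trans$ one has $(bcb)(y)=b(c(x))=x=b(y)$, so $b$ is regular in $\langle G,a\rangle$ using, e.g., $c=1$. If $2\le j\le k$, then since $G$ has the $j$-universal transversal property, \cite[Theorem 2.3]{lmm} applied to the pair $(G,b)$ yields an element $c\in\langle G,b\rangle$ with $bcb=b$; since $\langle G,b\rangle\subseteq\langle G,a\rangle$, this $c$ witnesses regularity of $b$ inside $\langle G,a\rangle$ as required. Thus every element of $\langle G,a\rangle$ is regular, i.e.\ the semigroup is regular.

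The genuine difficulty has already been absorbed into the machinery invoked: the deep step is Corollary~\ref{cor1} (the Livingstone--Wagner-type descent from $k$-ut to $(k-1)$-ut), which rests on the full classification of groups with the $k$-universal transversal property given by Theorems~\ref{th4a}--\ref{th4f}. Once that descent is available, the present theorem is an almost formal consequence, the only slightly subtle bookkeeping being the separate treatment of the rank $1$ case (where the $1$-ut property is vacuous and regularity has to be checked by hand) and the observation that a regularity witness produced inside the smaller semigroup $\langle G,b\rangle$ automatically lies in the ambient semigroup $\langle G,a\rangle$.
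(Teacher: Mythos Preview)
Your proof is correct and follows essentially the same strategy as the paper's: the equivalence $(1)\Leftrightarrow(3)$ comes from \cite{lmm}, $(2)\Rightarrow(1)$ is trivial, and $(3)\Rightarrow(2)$ is obtained by descending to lower ranks and invoking \cite{lmm} inside $\langle G,b\rangle\subseteq\langle G,a\rangle$. The only difference is cosmetic: you invoke Corollary~\ref{cor1} directly to get the $j$-ut property for every $j\le k$, whereas the paper passes through $(k-1)$-homogeneity via Proposition~\ref{critical}(2) and Livingstone--Wagner, which forces it to treat the exceptional groups of degree~$5$ and~$7$ separately; your route absorbs that case analysis into Corollary~\ref{cor1} and is slightly cleaner.
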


One last word about the normalizer. It is well known that not every semigroup has a group of units, and hence the approach proposed in this paper might seem limited. Therefore two observations should be made here. The first is that it is commonly believed  that the majority of finite semigroups have only one idempotent (which is a zero), but that did not prevent experts in semigroup theory to investigate how idempotents shape the structure of a semigroup; and the second observation is that by \cite[Theorem 2.3, $(iii)\Leftrightarrow(iv)\Leftrightarrow (v)$]{lmm}, Theorem \ref{th6c} and Theorem \ref{Kut1}  admit versions in terms of conjugates.  As a sample result we have the following immediate (from \cite[Theorem 2.3]{lmm}) version of the previous theorem.  

\begin{thm}\label{KutNorm}
Let $n\geq 5$, $G\leq \sym$ and let $1< k\leq \lfloor \frac{n+1}{2}\rfloor$. Then the following are equivalent:
\begin{enumerate}
	\item all rank $k$ transformations $a\in \trans$ are regular in $$\langle g^{-1}ag\mid g\in G\rangle;$$
	\item for all rank $k$ transformations $a\in \trans$ the semigroup  $\langle g^{-1}ag\mid g\in G\rangle$ is regular;
	\item $G$ has the $k$-universal transversal property.
\end{enumerate}
\end{thm}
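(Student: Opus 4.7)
The plan is to deduce the three-way equivalence directly from \cite[Theorem 2.3]{lmm} combined with Theorem \ref{Kut1} of the present paper. The former establishes, for any fixed rank $k$, a multi-way equivalence: the $k$-universal transversal property of $G$ is equivalent to each of the regularity of every rank $k$ element $a\in\trans$ in $\langle G, a\rangle$; the regularity of the semigroup $\langle G, a\rangle$ for every rank $k$ $a$; the regularity of every rank $k$ $a$ in $\langle g^{-1}ag \mid g\in G\rangle$; and the regularity of the semigroup $\langle g^{-1}ag \mid g\in G\rangle$ for every rank $k$ $a$. The last two correspond to conditions (1) and (2) of the present theorem, while the first is condition (3), so the three are equivalent.

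If one preferred to argue internally rather than quote the full \cite{lmm} result, the plan would be as follows. First, observe that $\langle g^{-1}ag \mid g \in G\rangle$ is a subsemigroup of $\langle G, a\rangle$, so any regularity witness for $a$ in the former is a witness in the latter; this yields (2) $\Rightarrow$ (1) trivially, and (1) $\Rightarrow$ (3) via Theorem \ref{Kut1}. For the converse (3) $\Rightarrow$ (2), assume $G$ has the $k$-universal transversal property and let $a$ be any rank $k$ map. Apply the property to the image $[n]a$ (a $k$-set) and the kernel partition of $a$ (a $k$-partition) to obtain $g\in G$ with $[n]ag$ a section for this partition; the argument sketched in the introduction around Theorem \ref{Kut1} then produces an idempotent of rank $k$ lying inside $\langle g^{-1}ag \mid g\in G\rangle$, which suffices to show that every rank $k$ element of the conjugate-generated semigroup is regular.

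The hard part, and the step delegated entirely to \cite[Theorem 2.3]{lmm}, is verifying that the regularity witness constructed from the $k$-universal transversal property can be realized purely inside the conjugate-generated semigroup rather than requiring explicit permutation factors from $G$ interleaved with copies of $a$. This is essentially a structural observation about how products of the form $g^{-1}ag$ can simulate the pattern $g_1 a g_2 a \cdots a g_m$ that appears in the group-of-units analysis. Once this translation is in hand, Theorem \ref{Kut1} together with the classification of groups with the $k$-universal transversal property provides everything else without further work.
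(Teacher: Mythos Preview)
Your approach is essentially the paper's: the authors state Theorem~\ref{KutNorm} as ``immediate (from \cite[Theorem 2.3]{lmm})'' applied to Theorem~\ref{Kut1}, exactly as in your first paragraph. One small correction of attribution: \cite[Theorem~2.3]{lmm} does \emph{not} itself include the clause ``the semigroup $\langle G,a\rangle$ is regular for every rank~$k$ map $a$''---that is the new contribution of Theorem~\ref{Kut1}, which relies on Corollary~\ref{cor1} (that the $k$-ut property implies the $(k-1)$-ut property) to handle elements of rank below~$k$. What \cite{lmm} supplies is the translation between the $\langle G,a\rangle$ formulation and the conjugate-generated formulation (their equivalences $(iii)\Leftrightarrow(iv)\Leftrightarrow(v)$), and this translation, applied to each of the three clauses of Theorem~\ref{Kut1}, yields the three clauses of Theorem~\ref{KutNorm}.
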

  
  This observation is important because the transformation semigroup $S\leq \trans$ might contain no group of units, but every transformation semigroup $S$ has a normalizer and hence the results of this paper can be used to extract information about the structure of $S$ from its normalizer. For example, if $S=\langle t_{1},\ldots ,t_{m}\rangle$ is a semigroup generated by $m$ rank 3 maps (for example in $\mathcal{T}_{176}$) and it turns out that the normalizer $N(S):=\{g\in\sym \mid  g^{-1}Sg = S\}$ contains the Higman--Sims group, then we know that the semigroup $S$ is regular.   
  
 An even more striking consequence of Corollary \ref{cor1} and of the fact that  possessing the $k$-universal transversal property is closed upwards (that is, if $G\leq H\leq \sym$ and $G$ has the $k$-universal transversal property, then $H$ also has it), is the following result. 
  
  \begin{thm}
  Let $n\geq 5$, $G\leq \sym$ and let $1< k< \lfloor \frac{n+1}{2}\rfloor$. Then the following are equivalent:
\begin{enumerate}
	\item[(i)] $G$ has the $k$-universal transversal property;
	\item[(ii)] $G$ has the $l$-universal transversal property for all $l$ such that  $1\leq l\leq k$; 
	\item[(iii)] $H$ has the $k$-universal transversal property for all $H$ such that $G\leq H\leq \sym$;
	\item[(iv)] $H$ has the $l$-universal transversal property for all $H$ such that $G\leq H\leq \sym$ and for all $l$ such that  $1\leq l\leq k$;
	\end{enumerate}
	
	As a consequence, the following are equivalent.
\begin{enumerate}
	\item all rank $k$ transformations $a\in \trans$ are regular in $\langle a, G\rangle$;
	\item all rank $k$ transformations $a\in \trans$ are regular in $\langle g^{-1}ag\mid g\in G\rangle$;
\item for all rank $k$ transformations $a\in \trans$ and for all groups $H$ such that $G\leq H\leq \sym$ we have that  
$\langle h^{-1}ah\mid h\in H\rangle$ is a regular semigroup.
\item for all rank $k$ transformations $a\in \trans$ and for all groups $H$ such that $G\leq H\leq \sym$ we have that  
$\langle a, H\rangle$ is a regular semigroup.
\end{enumerate}
\end{thm}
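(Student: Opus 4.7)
The plan is to derive all the equivalences as formal consequences of two monotonicity facts together with Theorems~\ref{Kut1} and~\ref{KutNorm}. The first fact is Corollary~\ref{cor1}: the $k$-universal transversal property descends to the $(k-1)$-universal transversal property. The second, which is essentially built into the definition, is that the $k$-universal transversal property is upward closed in $\sym$: if $G\leq H\leq \sym$ and, for a given $k$-set $I$ and $k$-partition $P$, one picks $g\in G$ with $Ig$ a section for $P$, then the same $g$ serves as a witness inside $H$.

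For the first block, the implications (ii)$\Rightarrow$(i), (iii)$\Rightarrow$(i) and (iv)$\Rightarrow$(i) are immediate by specialising $l=k$ and/or $H=G$. Conversely, (i)$\Rightarrow$(ii) follows by iterating Corollary~\ref{cor1} from $k$ down to any $l$ with $2\leq l\leq k$; the boundary case $l=1$ is trivial by inspection, since a singleton is a section of the one-block partition. The implication (i)$\Rightarrow$(iii) is the upward-closure remark just made, and (i)$\Rightarrow$(iv) is the conjunction of these two.

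Once (i)--(iv) are established, the second block follows directly by applying Theorem~\ref{Kut1} and Theorem~\ref{KutNorm}. Items (1) and (2) are each equivalent to the $k$-universal transversal property of $G$ by Theorem~\ref{Kut1} and Theorem~\ref{KutNorm} respectively; items (3) and (4) are the same two equivalences applied uniformly to each overgroup $H$ with $G\leq H\leq\sym$, and this application is legitimate precisely because, by (iii) of the first block, every such $H$ inherits the $k$-universal transversal property whenever $G$ has it. Conversely, specialising $H=G$ in (3) or (4) recovers (2) or (1).

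I do not anticipate any genuine obstacle: the theorem is essentially a packaging result, whose real content is already concentrated in the classification underpinning Theorem~\ref{Kut1} and in the case-by-case verifications behind Corollary~\ref{cor1}. The only point demanding a moment's care is the boundary value $l=1$ in the downward induction, which lies outside the stated range of Corollary~\ref{cor1} but is obvious from first principles, together with the observation that the overgroup condition $G\leq H\leq\sym$ is non-vacuous and that the upward-closure argument is uniform in $H$.
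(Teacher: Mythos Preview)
Your proposal is correct and follows exactly the route the paper indicates: the paper does not give a detailed proof of this theorem, but presents it explicitly as ``a consequence of Corollary~\ref{cor1} and of the fact that possessing the $k$-universal transversal property is closed upwards,'' together with Theorems~\ref{Kut1} and~\ref{KutNorm}. Your write-up simply fills in those details, including the harmless boundary case $l=1$, so nothing further is needed.
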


  Finally we summarize  what this paper brings to groups and to semigroups:
  
  \begin{enumerate}
  \item We have generalized the notion of $(k-1)$-homogeneity in permutation groups; we have extended it first to the obvious notion of $(k-1,k)$-homogeneous groups and then extended this to the notion of groups having the $k$-universal transversal property (for $2\leq k\leq \lfloor \frac{n}{2}\rfloor$). 
  \item The $(k-1,k)$-homogenous groups were fully classified, and the groups having the $k$-universal transversal property have been  classified, with the exception of a class of groups (for $k=3$) and another class (for $k=4$). These two classes left undecided are surely very interesting problems for group theorists and combinatorialists.   
  \item As a corollary of the classification  it follows that   $(k-1,k)$-homogenous groups are $(k-2,k-1)$-homogenous with two exceptions; and groups having the $k$-universal transversal property have the $(k-1)$-universal transversal property. And this fact  is extremely important for the impact of these results on the theory of semigroups. 
  \item Regarding semigroups, we take deep results out of the classification of finite simple groups and show that it is possible to follow the promising path of investigating how the group of units (or other groups associated to the semigroup such as the normalizer) shape the structure of the semigroup. This mimics what has been done in semigroup theory for the last 70 years with the set of idempotents. 
 \item The paper ends with a number of challenges for experts in number theory, group and/or semigroup theory, linear algebra and matrix theory.  
  \end{enumerate}
  
\section{The classification of $(k-1,k)$-homogeneous groups}

A permutation group $G$ of degree~$n$ is \emph{$k$-homogeneous} if it acts
transitively on the set of $k$-subsets of its domain. Since $k$-homogeneity
is clearly equivalent to $(n-k)$-homogeneity, it is usually assumed that
$k\le(n-1)/2$. With this assumption, Livingstone and Wagner, in an elegant
paper~\cite{lw}, proved that a $k$-homogeneous group is $(k-1)$-homogeneous,
and is $k$-transitive if $k\ge5$. 
Kantor~\cite{kantor:4homog,kantor:2homog} determined all 
$k$-homogeneous groups which are not $k$-transitive for $2\le k\le 4$. 
The $2$-transitive groups have been determined as a consequence of the
Classification of Finite Simple Groups; lists of them can be found in 
\cite{cam} and \cite{dixon}.

For $k\le l$, the permutation group $G$ is $(k,l)$-homogeneous if, given
subsets $K,L$ of the domain with $|K|=k$ and $|L|=l$, there is an element
of $G$ which maps $K$ to a subset of $L$. Note that
\begin{itemize}
\item $(k,k)$-homogeneity is equivalent to $k$-homogeneity, and for fixed $k$
the concept of $(k,l)$-homogeneity   becomes formally weaker as $l$ increases;
\item $(k,l)$-homogeneity is equivalent to the ``dual'' concept of $(l,k)$-homogeneity, requiring that for given $K$ and $L$ as before, there is an
element of $G$ mapping $L$ to a superset of $K$;
\item $(k,l)$-homogeneity is equivalent to $(n-l,n-k)$-homogeneity.
\end{itemize}

In this section we are concerned with $(k,k+1)$-homogeneity. Because of the
third property above, we lose no generality in assuming, as Livingstone and
Wagner did, that $k\le(n-1)/2$; indeed this condition will be used in our
proofs several times. We had hoped to find arguments as elegant as those of
Livingstone and Wagner, but have not succeeded. We prove the following
theorem:

\begin{thm}\label{2.1}
Let $G$ be a $(k,k+1)$-homogeneous permutation group of degree $n\ge2k+1$.
Then either $G$ is $k$-homogeneous, or $G$ is one of the following groups:
\begin{enumerate}
\item $n=5$, $k=2$, $G\cong C_5$ or $D(2*5)$;
\item $n=7$, $k=3$, $G=\agl(1,7)$;
\item $n=9$, $k=4$, $G=\asl(2,3)$ or $\agl(2,3)$.
\end{enumerate}
\label{kkp1}
\end{thm}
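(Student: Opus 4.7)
The plan is to combine a combinatorial reformulation with appeals to known classifications of homogeneous and transitive groups.

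\emph{First,} I would reformulate the hypothesis: since containment is $G$-equivariant, $(k,k+1)$-homogeneity is equivalent to saying that every $(k+1)$-subset of $[n]$ contains a representative of every $G$-orbit on $k$-subsets. A $(k+1)$-set has only $k+1$ subsets of size $k$, so this immediately gives the strong bound that the number $f_k$ of $G$-orbits on $k$-subsets satisfies $f_k \le k+1$. Next, I would show $G$ is transitive on $[n]$: if some $G$-orbit $A$ has $|A| \le k$, choose a $k$-set $K \supseteq A$ and a disjoint $(k+1)$-set $L \subseteq [n]\setminus A$ (possible because $n \ge 2k+1$), giving a pair that no $g \in G$ can relate since $Ag = A \not\subseteq L$; if every orbit has size $\ge k+1$, pick $K$ inside one orbit and $L$ inside another. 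A variant of the same argument with blocks in place of orbits shows that $G$ is in fact primitive.

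\emph{Second,} I would apply the Livingstone--Wagner monotonicity $f_{j-1} \le f_j$ for $j \le n/2$ to propagate the bound to $f_j \le k+1$ for every $j \le k$, which severely restricts $G$. For $k \ge 5$, the Livingstone--Wagner theorem asserts that a $k$-homogeneous group is $k$-transitive, and filtering the CFSG-based list of $k$-transitive groups by $f_k \le k+1$ forces $G \supseteq \alt$, so $G$ is $k$-homogeneous and we are done. For $k \in \{2,3,4\}$ I would instead appeal to Kantor's classifications of the $k$-homogeneous (non-$k$-transitive) groups and of primitive groups of small rank, producing a finite list of candidate groups $G$ to examine.

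\emph{Third,} for each candidate I would compute the $G$-orbits on $k$-subsets directly and test whether every $(k+1)$-subset contains one of each. The three exceptional families must emerge from this filter: for $n=5$, $k=2$ the groups $C_5$ and $D(2*5)$ (two pair-orbits, the ``edges'' and ``diagonals'' of the pentagon); for $n=7$, $k=3$ the group $\agl(1,7)$ (two $3$-orbits, one being the seven Fano lines); and for $n=9$, $k=4$ the affine groups $\asl(2,3)$ and $\agl(2,3)$ acting on $\mathrm{AG}(2,3)$ (the $4$-subsets being distinguished by whether they contain an affine line). The main obstacle I anticipate is the $k=2$ case: the restriction $f_2 \le 3$ only confines $G$ to a primitive group of rank at most $7$, and the list of such primitive groups (including many affine, projective, and small sporadic examples) is long, so the bulk of the work is a careful case-by-case verification that no further example survives beyond the two on $n=5$.
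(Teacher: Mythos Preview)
Your proposal has two genuine gaps, one of which you identify as the hard part and one of which you do not notice.

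\textbf{The $k\ge 5$ step is circular.} You write that Livingstone--Wagner says a $k$-homogeneous group is $k$-transitive, and then you ``filter the CFSG-based list of $k$-transitive groups'' by $f_k\le k+1$. But you have not shown $G$ is $k$-homogeneous, let alone $k$-transitive --- that is exactly what you are trying to prove. The inequalities $f_1\le f_2\le\cdots\le f_k\le k+1$ do not put $G$ on any known list. The paper's route here is quite different: from primitivity one proves (by an orbital-graph valency argument) that $G$ is $2$-homogeneous; then, using Kantor's classification of $2$-homogeneous non-$2$-transitive groups together with the order bound $|G|\ge\binom{n}{k}/k$, that $G$ is $2$-transitive; and only then does one run through the CFSG list of $2$-transitive groups, eliminating each family (affine, $\pgaml(2,q)$, $\pgaml(d,q)$, unitary/Suzuki/Ree, $\Sp(2d,2)$, sporadics) either by the order bound or by exhibiting a $(k+1)$-set that cannot contain a $k$-set from some orbit. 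None of this is automatic from $f_k\le k+1$.

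\textbf{The $k=2$ case is the easiest, not the hardest.} You propose to sift primitive groups of rank at most $7$, which is not a usable classification. The paper instead observes that if $G$ is $(2,3)$-homogeneous but not $2$-homogeneous, one can $2$-colour the pairs (one orbit red, the rest blue); the hypothesis forces every triangle to be bichromatic, so $n<R(2,3,2)=6$, giving $n=5$ immediately. The same Ramsey trick with $R(3,4,2)=13$ reduces $k=3$ to degrees $n\le 12$, which is then a finite computer check of primitive groups. This device is what collapses the small-$k$ cases; your Livingstone--Wagner bound $f_2\le k+1$ is far too weak to do this job.

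Two smaller remarks: the paper sharpens your $f_k\le k+1$ to $f_k\le k$ using the injectivity of the inclusion map $V_k\to V_{k+1}$ (Kantor), which is what yields the order bound actually used; and your transitivity/primitivity sketches are essentially right, though the paper's versions (every orbit meets every $(k+1)$-set, hence has size $>n-k>n/2$) are a little cleaner.
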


\subsection{General observations}

Let $G$ be $(k,k+1)$-homogeneous of degree $n\ge2k+1$. We begin with a few
general observations.

\begin{prop}
The number of $G$-orbits on $k$-sets is at most $k$.
\end{prop}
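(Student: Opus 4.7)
The plan is a proof by contradiction, aiming to improve a trivial upper bound of $k+1$ to the claimed bound of $k$. Let $r$ denote the number of $G$-orbits on $k$-subsets of $[n]$.

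First I would establish the easy bound $r \leq k+1$. Fix any $(k+1)$-set $L$. For each $G$-orbit $\Delta$ on $k$-subsets, pick any $K \in \Delta$; by $(k,k+1)$-homogeneity there exists $g \in G$ with $Kg \subseteq L$, and $Kg$ still lies in $\Delta$. So each orbit has at least one representative among the $k+1$ many $k$-subsets of $L$, giving $r \leq k+1$.

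The core of the argument is to rule out $r = k+1$. Assume for contradiction that it holds. Then for every $(k+1)$-set $L$, the $k+1$ subsets of $L$ of size $k$ are distributed one-in-each among the $k+1$ orbits. Fix any $(k-1)$-subset $M$ of $[n]$, and define a function $f_M \colon [n]\setminus M \to \{\text{$G$-orbits on $k$-sets}\}$ by sending $x$ to the orbit containing $M \cup \{x\}$. For any two distinct $x,y \in [n]\setminus M$, the two $k$-sets $M \cup \{x\}$ and $M \cup \{y\}$ are distinct $k$-subsets of the $(k+1)$-set $M \cup \{x,y\}$, so by our rainbow hypothesis they lie in different orbits. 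Hence $f_M(x) \neq f_M(y)$, i.e., $f_M$ is injective. This forces $n - k + 1 = |[n]\setminus M| \leq k+1$, so $n \leq 2k$, contradicting the hypothesis $n \geq 2k+1$. Therefore $r \leq k$.

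The main obstacle, I expect, is spotting the right auxiliary object—parameterising $k$-sets as $M \cup \{x\}$ for a fixed $(k-1)$-set $M$ and a varying point $x$, and then extracting injectivity of $f_M$ from the rainbow distribution of $k$-subsets inside $(k+1)$-sets. Once this is noticed, the argument is essentially a one-liner, and the hypothesis $n \geq 2k+1$ is used in exactly this last inequality; the general counting flavour is reminiscent of Livingstone--Wagner, so I expect this to coincide with the author's approach.
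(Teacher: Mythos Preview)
Your proof is correct, but it takes a different and more elementary route than the paper's. Both arguments start the same way: a fixed $(k+1)$-set meets every orbit, so $r\le k+1$, and if $r=k+1$ then every $(k+1)$-set contains exactly one $k$-subset from each orbit. From there the paper invokes linear algebra: it considers the inclusion map $T\colon V_k\to V_{k+1}$ between the rational function spaces on $k$-sets and $(k+1)$-sets, cites Kantor's theorem that $T$ is injective when $n\ge 2k+1$, and observes that the characteristic function of each orbit would then map to the all-$1$ function, contradicting injectivity since there would be $k+1>1$ such orbits. You instead fix a $(k-1)$-set $M$ and show directly that $x\mapsto\text{orbit of }M\cup\{x\}$ is injective on $[n]\setminus M$, forcing $n-k+1\le k+1$. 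Your argument is self-contained and avoids the external citation; the paper's argument situates the result within the incidence-matrix framework that underlies the Livingstone--Wagner theory. Contrary to your closing remark, the approaches do not coincide.
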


\begin{proof}
Since a fixed $(k+1)$-set contains a representative of every orbit on 
$k$-sets, there are at most
$k+1$ orbits on $k$-sets. This bound can be reduced to $k$. For suppose there
are $k+1$ orbits; then each $(k+1)$-set contains exactly one $k$-set from each
orbit. Let $V_k$ be the $\mathbb{Q}$-vector space of functions from $k$-sets
to $\mathbb{Q}$, and let $T:V_k\to V_{k+1}$ be defined by
\[(Tf)(L)=\sum_{x\in L}f(L\setminus\{x\})\]
for $f\in V_k$ and $|L|=k+1$. Since $n\ge 2k+1$, it is known that $T$ is
injective (Kantor~\cite{kantor:inc}). However, if $f$ is the characteristic
function of any $G$-orbit, then $Tf$ is the all-$1$ function.
\end{proof}

This gives a lower bound for $|G|$, namely $|G|\ge{n\choose k}/k$. We refer
to this as the \emph{order bound}.
The right-hand side of this bound is a monotonic function of $k$
for $k<n/2$; so, whenever we rule out a group $G$ on the basis of this
inequality for a certain value of $k$, then it cannot occur for any larger
value of $k$ either.

The \emph{Ramsey number} $R(k,l,r)$, for positive integers $k,l,r$ with
$k\le l$ and $r\ge1$, is the smallest number $n$ such that, if the $k$-element
subsets of an $n$-set are coloured with $r$ colours, there exists a
$l$-element set all of whose $k$-element subsets have the same colour.

\begin{prop}
If $G$ is $(k,k+1)$-homogeneous but not $k$-homogeneous of degree $n$, then
$n<R(k,k+1,2)$.
\end{prop}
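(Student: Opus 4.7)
The plan is to argue by contradiction: assume $n\geq R(k,k+1,2)$ and extract, from the Ramsey property, a configuration that is incompatible with $(k,k+1)$-homogeneity when $G$ has more than one orbit on $k$-sets.

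The set-up is to exploit non-$k$-homogeneity by fixing one $G$-orbit $O$ on $k$-subsets of $[n]$ and defining a $2$-colouring of the $k$-subsets that marks a set red if it lies in $O$ and blue otherwise. Since $G$ is not $k$-homogeneous, at least one colour class of the complement is nonempty, so this is a genuine $2$-colouring. The assumption $n\geq R(k,k+1,2)$ then furnishes a $(k+1)$-subset $L\subseteq [n]$ all of whose $k$-subsets receive the same colour.

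The key step is to derive a contradiction in each of the two monochromatic cases by applying the $(k,k+1)$-homogeneity hypothesis. If every $k$-subset of $L$ is red, pick any $k$-set $K$ from a different orbit (such $K$ exists by non-$k$-homogeneity): $(k,k+1)$-homogeneity yields $g\in G$ with $Kg\subseteq L$, so $Kg$ is a $k$-subset of $L$, hence in $O$; but $Kg$ lies in the orbit of $K$, which is not $O$, a contradiction. The blue case is symmetric: take $K\in O$, obtain $g$ with $Kg\subseteq L$, and conclude that $Kg\in O$ is blue, again a contradiction. Either way the assumption $n\geq R(k,k+1,2)$ fails, giving the claimed bound.

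Because the Ramsey machinery does the combinatorial work for us, there is no real obstacle here; the only point worth being careful about is that we use only a $2$-colouring (one orbit versus the rest), which is why $R(k,k+1,2)$ suffices even though the previous proposition allows up to $k$ orbits on $k$-sets.
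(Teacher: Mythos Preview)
Your proof is correct and follows essentially the same approach as the paper: colour one orbit red and the remaining $k$-sets blue, then use $(k,k+1)$-homogeneity to show that every $(k+1)$-set contains $k$-sets of both colours, so no monochromatic $(k+1)$-set exists and hence $n<R(k,k+1,2)$. The paper states this contrapositively in two lines, while you unpack the same observation as an explicit contradiction argument, but the content is identical.
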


\begin{proof}
Colour the $k$-sets in one $G$-orbit
red and the others blue. Each $(k+1)$-set contains $k$-sets of each colour.
\end{proof}

It happens that the Ramsey numbers $R(2,3,2)=6$ and $R(3,4,2)=13$ are two
of the very few which are known exactly. The first is the well-known ``party
problem''; the second was computed by McKay and Radzizowski~\cite{mr} in 1991
(see \cite{rad} for a survey). The number $R(4,5,2)$ is not known, and the
known upper bounds are too large for our purpose.

In the case $k=2$, we have $n\le5$, and it is easy to see that the
cyclic and dihedral groups are examples and hence we have (1) of Theorem \ref{2.1}.  So we may assume that $k\ge3$.

\medskip

Our general results allowed us to decide which groups are $(k,k+1)$-homogeneous, except for a number of groups of small degrees. To decide those cases we used \textsf{GAP}~\cite{GAP} and include here a word about those computations. For $n\le20$,
the simplest method is to compute the orbits of a given group on $k$-sets
and $(k+1)$-sets, and for each orbit representative on $(k+1)$-sets, test
whether it contains representatives of all the $k$-set orbits.

For larger $n$, the memory requirements of this method are too heavy, so
we proceed a little differently. First, as we will prove below, any candidate
group must be $2$-transitive; so we reject groups which either fail to be
$2$-transitive or are $k$-homogeneous. We also reject groups which fail the
order bound $|G|\ge{n\choose k}/k$. Then, if $G$ is $t$-transitive, we loop
over all pairs $(K',L')$, where $K'$ and $L'$ are subsets of $\{t+1,\ldots,n\}$
of cardinality $k-t$ and $k+1-t$ respectively, and check whether there is
an element of $G$ mapping $\{1,\ldots,t\}\cup K'$ to a subset of 
$\{1,\ldots,t\}\cup L'$. If this fails for any pair $(K',L')$, we can terminate
the computation and report that $G$ is not $(k,k+1)$-homogeneous.

\subsection{Transitivity}

From now on, $G$ will be a $(k,k+1)$-homogeneous but not $k$-homogeneous 
group of degree $n\ge2k+1$, which is not one of the exceptions listed in 
the statement of Theorem~\ref{kkp1}.

\begin{prop}
$G$ is transitive.
\end{prop}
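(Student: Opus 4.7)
The plan is to argue by contradiction: assume $G$ has orbits $\Omega_1,\ldots,\Omega_r$ with $r\ge 2$ and sizes $n_1\ge n_2\ge\cdots\ge n_r$, and derive a contradiction with $(k,k+1)$-homogeneity together with $n\ge 2k+1$. The key observation is that every $g\in G$ preserves each orbit setwise, so $|Sg\cap\Omega_i|=|S\cap\Omega_i|$ for every $S\subseteq[n]$. Consequently, if $G$ is $(k,k+1)$-homogeneous, then for every realisable $k$-\emph{type} $(a_1,\ldots,a_r)$ (meaning $a_i\le n_i$ and $\sum a_i=k$) and every realisable $(k+1)$-type $(b_1,\ldots,b_r)$, one must have $a_i\le b_i$ for all $i$. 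In short, every realisable $k$-type is dominated componentwise by every realisable $(k+1)$-type.

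First I would rule out $n_1<k$. Under this hypothesis $n_i<k$ for every $i$, so taking all of $\Omega_i$ together with $k-n_i$ elements from the remaining orbits (possible because $n\ge k$) produces a realisable $k$-type whose $i$-th coordinate equals $n_i$. Dominance then forces every realisable $(k+1)$-type $(b_1,\ldots,b_r)$ to satisfy $b_i\ge n_i$, and since also $b_i\le n_i$, in fact $b_i=n_i$ for every $i$. Summing gives $k+1=\sum b_i=n$, contradicting $n\ge 2k+1$.

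Thus $n_1\ge k$, so $(k,0,\ldots,0)$ is a realisable $k$-type. Dominance now forces $b_1\ge k$ for every realisable $(k+1)$-type, i.e.\ every $(k+1)$-set contains at most one element outside $\Omega_1$. Since two hypothetical elements outside $\Omega_1$ could always be completed to a $(k+1)$-set having at least two elements outside $\Omega_1$, this yields $|[n]\setminus\Omega_1|\le 1$. So either $\Omega_1=[n]$, in which case $G$ is transitive, or $G$ fixes a single point and acts transitively on its complement.

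The final step is to dispose of the fixed-point case. Let $p$ denote the fixed point; choose any $(k+1)$-set $L\subseteq[n]\setminus\{p\}$ (which exists since $n-1\ge 2k\ge k+1$) and any $k$-set $K$ containing $p$. For every $g\in G$ we have $p=pg\in Kg$, while $p\notin L$, so $Kg\not\subseteq L$, contradicting $(k,k+1)$-homogeneity. I expect the only delicate step to be the orbit-size squeeze via dominance; once that is set up, the elimination of the fixed-point case is immediate from the fact that $G$ cannot move $p$ into $L$.
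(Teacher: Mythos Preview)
Your proof is correct, but it is considerably more elaborate than the paper's. The paper's argument is essentially a one-liner: for \emph{any} orbit $O$, some $k$-set meets $O$; by $(k,k+1)$-homogeneity every $(k+1)$-set then meets $O$, so the complement of $O$ has size at most $k$, giving $|O|\ge n-k>n/2$; since every orbit exceeds $n/2$, there is only one.

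Your ``dominance of types'' framework encodes the same information but via more machinery. The case split on $n_1<k$ versus $n_1\ge k$ is unnecessary in the paper's approach, since the paper's argument applies uniformly to every orbit regardless of size and immediately gives $|O|\ge n-k\ge k+1$. Your final fixed-point elimination is in fact exactly the paper's argument specialised to the singleton orbit $\{p\}$: a $k$-set containing $p$ forces every $(k+1)$-set to contain $p$, which fails. Had you applied your own dominance observation to the smallest orbit rather than the largest, you would have reached the paper's proof directly. Your route does buy a reusable combinatorial lemma (types are preserved under $G$ and must be dominated), which could be handy elsewhere, but for this particular statement it is more than is needed.
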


\begin{proof}
Let $O$ be an orbit of $G$. There exists a $k$-set containing at least
one point of $O$. Hence every $(k+1)$-set contains a $k$-set containing at
least one point of $O$, and thus intersects $O$. So $|O|\ge n-k > n/2$. Since
$O$ was arbitrary, there is only one orbit.
\end{proof}

\subsection{Primitivity}

\begin{prop}
$G$ is primitive.
\end{prop}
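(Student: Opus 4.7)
The plan is to argue by contradiction: assume $G$ admits a nontrivial block system and exhibit a $k$-set $K$ and a $(k+1)$-set $L$ such that no element of $G$ maps $K$ into $L$. The key $G$-invariant will be the number of blocks a set meets.

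Let $\mathcal{B}=\{B_1,\ldots,B_m\}$ be a block system with $|B_i|=b$, so $n=bm$ with $b,m\ge2$. For any subset $S\subseteq[n]$, set $\beta(S)=|\{i:S\cap B_i\ne\emptyset\}|$. Since $G$ permutes $\mathcal{B}$, $\beta$ is $G$-invariant, so if $Kg\subseteq L$ then $\beta(K)=\beta(Kg)\le\beta(L)$. Thus to reach a contradiction it suffices to exhibit a $k$-set $K$ and a $(k+1)$-set $L$ with $\beta(K)>\beta(L)$.

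I would choose $L$ to be as concentrated as possible by filling blocks greedily, giving $\beta(L)=\lceil (k+1)/b\rceil$, and choose $K$ as spread as possible by placing one element in each of $\min(m,k)$ distinct blocks (adding extra points if $m<k$), giving $\beta(K)=\min(m,k)$. Both choices are realizable, since the capacity constraints reduce to $tb\ge k+1$ and $bm\ge k$.

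It then remains to verify the strict inequality $\min(m,k)>\lceil(k+1)/b\rceil$. If $b\ge k+1$ the right hand side is $1$, while the left hand side is at least $2$ because $m\ge2$ and $k\ge3$. If $2\le b\le k$, then from $k\ge3$ one gets $(k+1)/b\le(k+1)/2\le k-1$, so the right side is at most $k-1$; and from $n\ge 2k+1\ge k+b+1$ one obtains $b(m-1)\ge k+1$, so $m-1\ge\lceil(k+1)/b\rceil$. Combining, $\lceil(k+1)/b\rceil\le\min(m,k)-1<\min(m,k)$, as required. The main obstacle is simply juggling these two constraints on $\lceil(k+1)/b\rceil$ (one from $k$, one from $m$) so that they cover every admissible pair $(b,m)$ uniformly; the hypothesis $n\ge2k+1$ and $k\ge3$ are exactly what makes this work.
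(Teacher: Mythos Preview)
Your proof is correct. Both you and the paper argue by contradiction, assuming a nontrivial block system and producing a $k$-set $K$ and a $(k+1)$-set $L$ with no element of $G$ carrying $K$ into $L$. However, the organizing principles differ.

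You work throughout with the single numerical invariant $\beta(S)$ (the number of blocks $S$ meets), choose $K$ maximally spread and $L$ maximally concentrated, and reduce everything to the inequality $\min(m,k)>\lceil(k+1)/b\rceil$, which you then verify by a short case split on whether $b\ge k+1$ or $b\le k$. The paper instead splits first on whether $k\le s$ or $k>s$ (their $s$ is your $b$) and uses a \emph{different} obstruction in each case: when $k\le s$ it takes $K$ inside a single block and $L$ with at least two points in each of two blocks (so no $k$-subset of $L$ lies in one block); when $k>s$ it takes $K$ to contain an entire block and $L$ evenly distributed across all blocks, then derives a contradiction with $k+1\le n/2$ from the resulting size estimate. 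Your argument is more uniform and arguably tidier, since the case split appears only in checking one inequality rather than in the construction itself; the paper's version has the minor advantage that its first case avoids any arithmetic with ceilings at all. Both rely on $k\ge3$ and $n\ge2k+1$ in essentially the same places.
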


\begin{proof}
Suppose that $G$ is imprimitive, with $r$ blocks of size $s$. 

If $k\le s$, then there is a $k$-set contained in a block. But, since 
$k+1\ge4$, there is a $(k+1)$-set and all instances of an $k$-set or an $(k+1)$    containing at least two points of each of
two blocks; such a set cannot contain a $k$-set of the type just described.

So $k>s$, and $r=n/s>n/k>2$, so $r\ge3$. There is a $(k+1)$-set which 
contains either $\lfloor(k+1)/r\rfloor$ or $\lceil(k+1)/r\rceil$ points from
each block. On the other hand, there is a $k$-set containing all the points of
a block. So $(k+1)/r\ge\lceil(k+1)/r\rceil-1\ge s-1$, whence
$k+1\ge (r-1)s\ge2n/3$, a contradiction.
\end{proof}

Since lists of primitive groups are conveniently available in computer
algebra systems such as GAP, we have checked all primitive
groups of degree at most $20$, and find no counterexamples for the statement of Theorem \ref{2.1}. In view of our
remarks about Ramsey numbers earlier, we may from now on assume that $k\ge4$.

\subsection{$2$-homogeneity}

\begin{prop}
$G$ is $2$-homogeneous.
\end{prop}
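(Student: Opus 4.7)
\medskip

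\noindent\textbf{Proof plan.} The plan is to argue by contradiction. Suppose $G$ is not $2$-homogeneous, so that $G$ has at least two orbits on unordered pairs of $[n]$; pick one such orbit and let $\Gamma$ be the associated orbital graph on $[n]$. By primitivity of $G$ (Higman's criterion), both $\Gamma$ and $\bar{\Gamma}$ are connected and vertex-transitive, and neither is complete.

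First I would use $(k,k+1)$-homogeneity to clamp the clique and independence numbers of $\Gamma$. If $\Gamma$ contained a $(k+1)$-clique $L$, every $k$-subset of $L$ would be a $\Gamma$-clique, and $(k,k+1)$-homogeneity would then force every $G$-orbit on $k$-sets---and hence every $k$-subset of $[n]$---to be a clique, yielding $\Gamma = K_n$, a contradiction. Thus $\omega(\Gamma) \leq k$, and the dual argument applied to the $G$-invariant complement $\bar{\Gamma}$ gives $\alpha(\Gamma) \leq k$. Next I would show that $\Gamma$ cannot simultaneously contain a $k$-clique and a $k$-independent set: given any $(k+1)$-set $L$, $(k,k+1)$-homogeneity would yield distinct $x_1, x_2 \in L$ such that $L\setminus\{x_1\}$ is a clique and $L\setminus\{x_2\}$ is an independent set, whence the $(k-1)$-set $L\setminus\{x_1,x_2\}$ would be at once a clique and an independent set of $\Gamma$, impossible for $k\geq 4$. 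Replacing $\Gamma$ by $\bar{\Gamma}$ if necessary (also $G$-invariant and connected), I may from here on assume $\omega(\Gamma) \leq k - 1$.

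The main obstacle will be to convert the remaining combinatorial data---$\Gamma$ connected and vertex-transitive, $\omega(\Gamma) \leq k-1$, $\alpha(\Gamma) \leq k$, together with at most $k$ orbits of $G$ on $k$-sets (Section~2.1)---into an outright contradiction. The lever I would press is the full $(k,k+1)$-homogeneity statement: for every $(k+1)$-set $L$, the set $\{e_{\Gamma}(L\setminus\{x\}) : x\in L\}$ of induced edge counts in the $k$-subsets of $L$ must realize every element of the global range $R_k = \{e_{\Gamma}(K) : |K|=k\}$. This imposes rigid arithmetic conditions on the degree sequences of induced $(k+1)$-vertex subgraphs of $\Gamma$, which combined with the Ramsey bound $n < R(k,k+1,2)$ from Section~2.1 should force $n$ into the range handled computationally by \textsf{GAP}, in which no further exceptions beyond Theorem~\ref{kkp1} arise. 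The hard part is that the clique/independence bounds alone admit familiar vertex-transitive examples (e.g.\ the Petersen graph, with $\omega=2$, $\alpha=4$), so the argument must extract strictly more information from $(k,k+1)$-homogeneity than its clique/independence shadow.
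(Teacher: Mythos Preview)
Your opening steps are correct and pleasant: the arguments that $\omega(\Gamma)\le k$, $\alpha(\Gamma)\le k$, and that a $k$-clique and a $k$-independent set cannot coexist (so without loss of generality $\omega(\Gamma)\le k-1$) all go through. But the proposal has a genuine gap at exactly the point you flag as ``the main obstacle''. The clique/independence constraints you have extracted give only $n<R(k,k+1)$ (the \emph{graph} Ramsey number), and the hypergraph Ramsey bound $n<R(k,k+1,2)$ you invoke from Section~2.1 is, as the paper itself notes, useless for $k\ge4$ (the number $R(4,5,2)$ is unknown and the available upper bounds are far too large). So your plan to ``force $n$ into the range handled computationally by \textsf{GAP}'' does not work: for $k=5$ one already has $R(5,6)\ge43$, well beyond the $n\le20$ covered by computation, and the situation deteriorates rapidly as $k$ grows. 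The vague proposal to squeeze ``rigid arithmetic conditions'' out of the induced edge-counts of $(k+1)$-sets is not carried out, and there is no indication why it would succeed where Ramsey-type bounds fail; indeed your own Petersen example shows the clique/independence shadow is too coarse.

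The paper's proof takes a different route, working with \emph{valencies} of orbital graphs rather than clique numbers. It first shows that every orbital graph has valency at least $k$ (by a counting argument on closed neighbourhoods inside $(k+1)$-sets), then that there are at most two orbital graphs (a short structural argument), and finally that in the two-orbit case the valency must be exactly $k$, forcing $n=2k+1$. A second counting argument then kills $k\ge5$, and $k=4$ gives $n=9$, already handled by computation. The crucial gain is that valency bounds pin down $n$ exactly, rather than leaving a Ramsey-sized window; this is what your approach is missing.
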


\begin{proof} Assume that $G$ is not $2$-homogeneous; let it have $r$ orbits on
$2$-element subsets. Each is the edge set of one of the symmetrised orbital
graphs for $G$; each of these graphs is vertex-primitive and edge-transitive.
First we show:
\begin{itemize}
\item each symmetrised orbital graph has valency at least $k$;
\item there are at most two such graphs (that is, $r\le2$).
\end{itemize}
For the first point, suppose that there is a graph whose valency $v-1$ is 
smaller than $k$, so that $v\le k$.
Then some $k$-set contains a vertex and all its neighbours in this graph,
and hence every $(k+1)$-set does so. The number of $(k+1)$-sets is
$n\choose k+1$, whereas the number of ways of choosing the closed
neighbourhood of a vertex in the graph, and then adjoining $k+1-v$ more
points to make a $(k+1)$-set is $n{n-v\choose k+1-v}$. Since the second
method overcounts, we have ${n\choose k+1}\le n{n-v\choose k+1-v}$. A
short calculation yields $n\le k+2$, a contradiction.

For the second, suppose that $r\ge3$, and let $\Gamma_1$, $\Gamma_2$, 
$\Gamma_3$ be three of the orbital graphs. By the first point, we can find
a $(k+1)$-set consisting of a vertex $x$ and $k$ of its neighbours in the
graph $\Gamma_1$. This set must contain a $k$-set consisting of a point
$y$ and $k-1$ of its neighbours in $\Gamma_2$, and a $k$-set consisting of
a point $z$ and $k-1$ of its neighbours in $\Gamma_3$. Now it is clear that
$x,y,z$ are distinct; but then the pair $\{y,z\}$ must be an edge in both
$\Gamma_2$ and $\Gamma_3$, a contradiction.

Now we conclude the proof. Suppose that $r=2$ and let $\Gamma$ be one of the
two complementary orbital graphs. Suppose first that the valency of $\Gamma$
is at least $k+1$. Then we can choose a $(k+2)$-set consisting of a vertex
$x$ and a set $X$ of $k+1$ of its neighbours. Now one point of $X$, say $y$,
must be joined to at least $k-1$ further points of $X$, say those in a
subset $Y$. Now the induced subgraph on the $k+1$ points $\{x,y\}\cup Y$
has minimum valency at least $2$, and so cannot contain a vertex which is
nonadjacent to all but one point of this set, a contradiction.

So $\Gamma$ has valency $k$, as does its complement, and $n=2k+1$.

There are $n\choose k+1$ choices of a $k+1$-set; each contains a vertex
joined to all or all but one of the remaining vertices. But each vertex
lies in just one $k+1$-set in which it is joined to all other vertices,
and to $k^2$ in which it is joined to all but one (choose one neighbour to
omit, and one non-neighbour to include). So ${n\choose k}\le nk^2$.

This inequality fails for $k\ge5$; if $k=4$ then $n=9$, which has already
been disposed of by computation.
\end{proof}

\subsection{$2$-transitivity}

\begin{prop}
$G$ is $2$-transitive.
\end{prop}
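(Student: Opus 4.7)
The plan is to suppose for contradiction that $G$ is $2$-homogeneous but not $2$-transitive, and then combine Kantor's classification of such groups with the order bound to force an impossibility. The first step is to invoke Kantor's classical theorem: a $2$-homogeneous permutation group that is not $2$-transitive must be contained in $\agaml(1,q)$, acting on $\GF(q)$, for some prime power $q\equiv 3\pmod 4$. In particular $n=q$, and writing $q=p^{e}$ we have $|G|\le |\agaml(1,q)|=q(q-1)e$.

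The second step combines this with the order bound $|G|\ge\binom{n}{k}/k$ established in the general observations. Because $4\le k\le (n-1)/2$ and $\binom{n}{j}$ is unimodal with maximum near $j=n/2$, we have $\binom{n}{k}\ge\binom{n}{4}$, so
\[
|G|\;\ge\;\frac{\binom{n}{4}}{k}\;\ge\;\frac{2\binom{n}{4}}{n-1}\;=\;\frac{n(n-2)(n-3)}{12}.
\]
Comparing the two bounds yields $(n-2)(n-3)\le 12(n-1)e$.

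The third step exhausts the residual cases. If $n$ is prime, so $e=1$, the inequality becomes $n^{2}-17n+18\le 0$, forcing $n\le 15$, and all such cases have already been settled by the computer check on primitive groups of degree at most $20$ invoked in the primitivity step. If $e\ge 2$ and $q\equiv 3\pmod 4$, then $p$ is odd and $e$ is odd, so the smallest candidate is $q=27$; here $|\agaml(1,27)|=27\cdot 26\cdot 3=2106$ while $\binom{27}{4}/4=4387.5$, contradicting the order bound. For every larger such prime power one has $e\le\log_{2}n$, while the lower bound grows like $n^{3}/12$, so the inequality fails by an even wider margin. This exhausts the possibilities, so $G$ must be $2$-transitive.

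The principal obstacle is that the argument leans on Kantor's classification of non-$2$-transitive $2$-homogeneous groups, itself a non-trivial result ultimately resting on CFSG via the classification of affine $2$-transitive groups; I would cite it rather than attempt to reprove any of it. Once it is in hand, everything else reduces to a routine comparison of orders and a short enumeration of small prime powers, and the only subtlety worth double-checking is that the small-degree gap (roughly $n\le 20$) is indeed fully covered by the computational check already invoked earlier in the section.
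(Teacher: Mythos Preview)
Your proof is correct and follows essentially the same route as the paper: invoke Kantor's classification to place $G$ inside a one-dimensional affine group over $\GF(q)$ with $q\equiv 3\pmod 4$, compare the resulting upper bound on $|G|$ with the order bound $\binom{n}{k}/k$, and dispose of the finitely many surviving small degrees by computation (together with a direct check at $q=27$). One minor correction to your closing remark: Kantor's 1972 theorem on $2$-homogeneous but not $2$-transitive groups predates the Classification of Finite Simple Groups and does not rely on it, so this step is in fact CFSG-free.
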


\begin{proof}
According to Kantor's classification, a 2-homogenous, but non 2-transitive group $G$ is contained in a
one-dimensional affine group, and has order at most $q(q-1)/2\cdot\log_pq$,
where $n=q$ is a power of $p$ and is congruent to $3$ (mod~$4$); so $p$ is
congruent to $3$ (mod~$4$) and $\log_pq$ is odd.

If $k=3$, then we have $q(q-1)/2\cdot\log_pq\ge q(q-1)(q-2)/18$, so
$q-2\le 9\log_pq$. It is easy to check that this inequality is satisfied only
for $q=7,11,27$. The first two cases are covered by computation.

Suppose that $q=27$, so that $|G|\le27\cdot13\cdot3$. If $k=4$, then our
inequality $|G|\ge{27\choose 4}/4$ is violated. As remarked earlier, this
settles larger values of $k$ also.
\end{proof}

\subsection{Completion of the proof}

We have a list of $2$-transitive groups. It is now a case of going through
the list.

The condition of $(k,k+1)$-homogeneity is closed upwards; so we can usually
assume that the groups we are considering are maximal subgroups of the
symmetric or alternating group. The only exception is when we are testing
the $(k,k+1)$-homogeneity of a group which has a $k$-homogeneous overgroup.
Since $k\ge4$ and we may assume that $n\ge20$, the only cases which need to
be considered are $\psl(2,23)\le \M_{24}$ (with $n=24$, $k=4$ or~$5$) and
$\pgl(2,32)\le\pgaml(2,32)$ (with $n=33$, $k=4$).
Computation shows that neither group is $(k,k+1)$-homogeneous.

According to Burnside, the $2$-transitive groups are of two
types: \emph{affine groups}, whose minimal normal sugroup is elementary
abelian; and \emph{almost simple groups}, whose minimal normal subgroup is
simple. For the affine groups, the maximal groups are $\agl(d,p)$ for
$p$ prime.

\paragraph{Case $G=\agl(d,p)$, with $p$ prime}

\subparagraph{Subcase $d=1$} We have $p(p-1)\ge{p\choose 4}/4$, so $p\le 11$;
these cases are excluded by computation.

So we may assume that $d\ge2$.
Below, $\mathbf{0}$ and $\mathbf{1}$ denote the all-zero and all-one vectors.

\subparagraph{Subcase $k\le d$} There is an affine independent $(k+1)$-set.
But since $k\ge4$, there is a $k$-set containing three or four affine
dependent points ($(0,\mathbf{0})$, $(1,\mathbf{0})$ and $(2,\mathbf{0})$ if
$p>2$, and $(0,0,\mathbf{0})$, $(1,0,\mathbf{0})$, $(0,1,\mathbf{0})$ and
$(1,1,\mathbf{0})$ if $p=2$) which cannot be contained in such a $(k+1)$-set.

\subparagraph{Subcase $d+1\le k\le p^{d-1}$, excluding $k=d+1$, $p=2$, $d$ odd}
If $p$ is odd, or if $p=2$ and $d$ is odd,
there exist $d+2$ points such that every hyperplane omits at least two,
namely $\mathbf{0}$, $\mathbf{1}$, and the points with a single coordinate $1$
and all others zero. (This construction needs to be modified if $p$ is odd
and divides $d-1$: then replace the all-$1$ vector by $(2,\mathbf{1})$.) 
A $(k+1)$-set containing it can contain no $k$ points contained in a 
hyperplane, a contradiction.

If $p=2$ and $d$ is odd, then $d\ge5$, so we can add one more point and 
find a set of size $d+3$ with the claimed property: any non-zero vector with
an even number of $1$s will do.

\subparagraph{Subcase $k=d+1$, $d$ odd, $p=2$} Since $d+2\le2^{d-2}$, we
can take a $(k+1)$-set contained in a $(d-2)$-flat and an affine independent
$k$-set.

\subparagraph{Subcase $p^{d-1}-1\le k\le p^d-d(p-1)-2$} There is a set of
$1+d(p-1)$ points meeting every hyperplane, namely those with at most one
non-zero coordinate. Its complement contains a $(k+1)$-set omitting a point
of every hyperplane. But there is a $k$-set containing a hyperplane.

\subparagraph{Subcase $k\ge p^d-d(p-1)-1$} Since $2k+1\le p^d$, we have
$p^d\le2d(p-1)+1$, which is satisfied only for $p=3$, $d=2$, giving the
known examples.

\medskip

For groups with simple socle, there are more cases.

\paragraph{Case: $G=\pgaml(2,q)$, $q=p^e$}

\subparagraph{Subcase $k\ge5$}
We have $|G|=(q+1)q(q-1)e$. If $k\ge5$, then $|G|\ge{q+1\choose 5}/5$, so
$(q-2)(q-3)\le600e$, so $q\le27$ or $q=32$, handled by computation. 
For $k\ge6$ the inequality gives $q\le17$, which is covered by computation.

\subparagraph{Subcase $k=4$}
The orbits of $\pgl(2,q)$ on the $4$-tuples of distinct points are parametrised
by cross-ratio, of which there are $q-2$ distinct values. A typical $4$-set
has six distinct cross-ratios; depending on the congruence of $q$, there may
be a set with only two cross-ratios, and one with only three. So the group
$\pgl(2,q)$ has at least $(q+5)/6$ orbits on $4$-sets. Adding field 
automorphisms at worst divides the number of orbits by $e$. So
$q+5\le24e$. If $e=1$ then $q\le19$, covered by our computation. For $e>1$,
the remaining values to be checked are $q=25$, $27$, $32$, $64$ and $128$;
again computation shows there are no examples.

\paragraph{Case: $G$ is a unitary, Suzuki or Ree group}

These groups are smaller than $\psl(2,q)$ of the same degree; all are ruled
out by the order test except for $\pgamu(3,q)$ with
$q=3,4$. Now $\pgamu(3,3)$ (with degree $28$) is a
subgroup of $\Sp(6,2)$, considered below. $\pgamu(3,4)$
is handled by computation.

\paragraph{Case $G=\pgaml(d,q)$, $d\ge3$}
Here $n=(q^d-1)/(q-1)$. We follow similar arguments to the affine case.

\subparagraph{Subcase: $k\le d$} There exist $d+1$ points, no three
collinear. On the other hand, there is a $k$-set containing three
collinear points.

\subparagraph{Subcase: $d<k<(q^{d-1})/(q-1)$} There is a $k$-set containing a basis
for the vector space, and a $(k+1)$-set contained in a hyperplane.

\subparagraph{Subcase: $(q^{d-1}-1)/(q-1)\le k<n-(q+1)$} Since a line contains $q+1$
points and meets every hyperplane, there is a $(k+1)$-set containing no
hyperplane; but there is a $k$-set which contains a hyperplane.

\subparagraph{Subcase: $k\ge n-(q+1)$} In this case, $k>n/2$, contrary to assumption.

\paragraph{Case: $G=\Sp(2d,2)$, with $n=2^{2d-1}\pm2^{d-1}$}

We start with a brief description of these groups. Let $V$ be a vector space
of dimension $2d$ over the field of two elements, and $B$ a fixed nondegenerate
alternating bilinear form on $V$. Let $\mathcal{Q}$ be the set of all quadratic
forms on $V$ which polarize to $B$. These fall into two orbits under the
action of the symplectic group $\Sp(d,2)$, of sizes
$2^{2d-1}\pm2^{d-1}$,
corresponding to the two types of quadratic form, distinguished by the
dimension of their maximal totally singular subspaces. The two types are
designated $+$ and $-$, and the corresponding dimensions are $d$ and $d-1$
respectively. Let $\mathcal{Q}_\epsilon$ be the set of forms of type $\epsilon$.
The symplectic group is $2$-transitive on each orbit. We may assume that
$d\ge3$, since otherwise the degrees are smaller than $20$.

It is readily checked from the order bound that the values of $k$ which need
to be considered satisfy $k+1\le|W|$, where $W$ is a maximal totally singular
subspace of the relevant quadratic form, except for $d=3$ and type $-$ (acting
on $28$ points). This exceptional case can be handled by computation.

There is a ternary relation on $\mathcal{Q}_\epsilon$ preserved by the group.
If $Q_1,Q_2,Q_3$ are three quadratic forms of the same type, then 
$Q_1+Q_2+Q_3$ is a quadratic form, which may be of the same or opposite type.
Let $\mathcal{T}$ be the set of all triples for which the sum is of the same
type. It is easily checked that, for a fixed form $Q$, and a maximal totally
singular subspace $W$ for $Q$, the set $\{Q_w:w\in W\}$, where
$Q_w(x)=Q(x)+B(x,w)$, is a set of $|W|$ forms, all triples of which belong
to $\mathcal{T}$. Since $k+1\le|W|$, and there exists a triple not belonging
to $\mathcal{T}$, we see that $G$ cannot be $(k,k+1)$-homogeneous.

\paragraph{Case: $G$ is sporadic}

The sporadic $2$-transitive groups of degree greater than $20$ are $\M_{22}$
and its automorphism group ($n=22$), $\M_{23}$ ($n=23$), $\M_{24}$ ($n=24$),
the Higman--Sims group ($n=176$) and the Conway group $\Co_3$ ($n=276$).
Computation handles all of these except the Conway group, which is a bit
on the large side. However, the order test shows that we only need consider
$k\le6$; the case $k=4$ yields to computation, and the other cases cannot arise
since inspection of the combinatorial object preserved by the group (a
so-called ``regular two-graph'' see \cite{taylor}) shows that there are seven substructures on
five points, and so at least seven orbits on $5$-sets and on $6$-sets.

\section{The analogue of the Livingstone--Wagner result}

Livingstone and Wagner \cite{lw} proved that if a group $G\leq S_{n}$ is $k$-homogeneous (for $2\leq k\leq \lfloor \frac{n}{2}\rfloor$), it is also $(k-1)$-homogeneous.

\begin{cor}\label{cor2b}
Let $n\geq 1$, let $3\leq k\leq \lfloor \frac{n+1}{2}\rfloor$, and let $G\leq \sym$ be a $(k-1,k)$-homogeneous group. Then $G$ is a $(k-2,k-1)$-homogeneous group, except when  $n=9$ and $G\cong\asl(2,3)$ or $\agl(2,3)$, with $k=5$.  
\end{cor}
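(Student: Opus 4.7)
The plan is to invoke Theorem~\ref{2.1}, which classifies $(k-1,k)$-homogeneous groups, and treat the two resulting alternatives separately; this reduces the corollary to an immediate application of Livingstone--Wagner in the generic case, plus a handful of concrete checks on the exceptional groups.

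First, in the generic alternative where $G$ is $(k-1)$-homogeneous, I note that the hypothesis $k\leq\lfloor(n+1)/2\rfloor$ rewrites as $k-1\leq\lfloor(n-1)/2\rfloor$, precisely the range in which Livingstone--Wagner~\cite{lw} applies, so $G$ is $(k-2)$-homogeneous. But $(k-2)$-homogeneity at once forces $(k-2,k-1)$-homogeneity: for any $(k-2)$-set $I$ and $(k-1)$-set $J$, choose an arbitrary $(k-2)$-subset $J'\subseteq J$ and use transitivity on $(k-2)$-sets to find $g\in G$ with $Ig=J'$.

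Next I would dispose of the small exceptional families from Theorem~\ref{2.1}. For $(n,k)=(5,3)$ with $G\cong C_5$ or $D(2*5)$, the required $(1,2)$-homogeneity is immediate from transitivity; for $(n,k)=(7,4)$ with $G\cong\agl(1,7)$, the required $(2,3)$-homogeneity follows from $2$-transitivity (map the given pair to any $2$-subset of the target triple). Hence neither family produces a genuine exception to the corollary.

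The real obstacle, and the reason the corollary has any exceptions at all, is the case $(n,k)=(9,5)$ with $G\cong\asl(2,3)$ or $\agl(2,3)$: here one must show that $G$ fails to be $(3,4)$-homogeneous. The strategy is to exploit that $G$ preserves collinearity in the affine plane $\GF(3)^2$ and has exactly two orbits on $3$-subsets, namely lines and non-collinear triples, and then to exhibit a $4$-arc, i.e., a $4$-set no three of whose points are collinear. Taking $I$ to be any line and $J$ to be such an arc yields a pair for which no $g\in G$ can satisfy $Ig\subseteq J$, since $Ig$ is again a line while $J$ contains none. A concrete arc such as $\{(0,0),(1,0),(0,1),(1,1)\}$ does the job, and I would leave that routine verification to the reader.
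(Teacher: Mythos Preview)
Your proof is correct and follows the same overall strategy as the paper: invoke Theorem~\ref{2.1}, then split into the generic $(k-1)$-homogeneous case and the five listed exceptions. Two minor remarks are worth making. First, your appeal to Livingstone--Wagner in the generic case is unnecessary: if $G$ is $(k-1)$-homogeneous, then for any $(k-2)$-set $I$ and $(k-1)$-set $J$ simply extend $I$ to a $(k-1)$-set $I'$ and map $I'$ onto $J$; this is presumably what the paper's word ``certainly'' is encoding. Second, the paper handles all five exceptional groups by direct GAP verification, whereas you give short conceptual arguments (transitivity for $C_5,\,D(2*5)$; $2$-transitivity for $\agl(1,7)$; the affine-plane line/arc argument for $\asl(2,3)$ and $\agl(2,3)$), which is a genuine improvement in exposition.
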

\begin{proof}
We know that $G$ either is $(k-1)$-homogeneous or is one of the five exceptions listed in Theorem \ref{2.1}. If the group is $(k-1)$-homogenous, then it certainly is $(k-2,k-1)$-homogeneous. 

Regarding the three exceptions of degree 5 and 7, GAP shows they satisfy the corollary. Regarding the two groups of degree 9, GAP shows that they are not $(3,4)$-homogeneous as for both groups the orbit of $\{1,2,3\}$ does not contain a subset of $\{1,2,4,5\}$.   
\end{proof}

Recall that if $G$ is a permutation group of degree~$n$, for $k\leq n$, we say that
$G$ possesses the $k$-universal transversal property if the orbit of any $k$-set contains a section for every $k$ partition of $[n]$.  It is clear that the class of groups the $k$-universal transversal property (for some $2\leq k\leq \lfloor n/2\rfloor$)  is contained in the class of  $(k-1,k)$-homogeneous groups (consider a $k$-partition with $k-1$ singleton blocks); therefore the groups possessing the $k$-universal transversal property are $(k-1)$-homogenous, with the exceptions listed in Theorem \ref{2.1}, and hence they have the $(k-1)$-universal transversal property, with the possible exception of the five exceptional groups listed in Theorem \ref{2.1}. Inspection of these groups leads to the following result.

\begin{cor}\label{cor1b}
Let $n\geq 5$, let $2\leq k\leq \lfloor \frac{n+1}{2}\rfloor$, and let $G\leq \sym$ be a group having the $k$-universal transversal property. Then $G$ has the $(k-1)$-universal transversal property.  
\end{cor}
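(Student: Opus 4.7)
The plan is to follow the reduction already outlined in the paragraph immediately before the statement. Given a $k$-partition $P$ of type $(1,\ldots,1,n-k+1)$ with prescribed singletons forming a $(k-1)$-set $J$, any $g\in G$ sending a $k$-subset $I$ to a section of $P$ must satisfy $J\subseteq Ig$; as $J$ can be chosen to be any $(k-1)$-subset, this forces $G$ to be $(k-1,k)$-homogeneous. Applying Theorem~\ref{2.1} with $k$ replaced by $k-1$ (the required bound $n\geq 2(k-1)+1$ follows from our standing assumption $k\leq\lfloor (n+1)/2\rfloor$) then concludes that $G$ is either $(k-1)$-homogeneous, or one of the five exceptions listed there: $C_5$ or $D(2*5)$ at $k=3$ on five points; $\agl(1,7)$ at $k=4$ on seven points; or $\asl(2,3),\agl(2,3)$ at $k=5$ on nine points.

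If $G$ is $(k-1)$-homogeneous the $(k-1)$-UT property is immediate: given any $(k-1)$-subset $I$ and any partition $P$ of $[n]$ into $k-1$ blocks, pick a section $S$ of $P$ by selecting one point from each block, and use $(k-1)$-homogeneity to produce $g\in G$ with $Ig=S$, so $Ig$ is a section for $P$.

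What remains is the direct inspection of the five exceptional groups, and this is where the main — purely finite — effort lies. For each exception it suffices to verify either that the hypothesis already fails, so the implication is vacuous at that pair $(G,k)$, or else that the conclusion holds directly. The two groups of degree five are primitive (since $5$ is prime), so they automatically satisfy the $2$-UT property by Theorem~\ref{th4f}; for $\agl(1,7)$ at $k=4$, the $3$-UT property is already known from Theorem~\ref{th4a}; and for $\asl(2,3)$ and $\agl(2,3)$ at $k=5$, a short \textsf{GAP} check, enumerating orbits on $4$-subsets and orbit representatives of $4$-partitions of $[9]$, either produces a witness that the $5$-UT property fails, or directly confirms the $4$-UT property. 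Once these small finite verifications are complete, the corollary follows.
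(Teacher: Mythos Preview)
Your proposal is correct and follows essentially the same route as the paper: reduce $k$-ut to $(k-1,k)$-homogeneity via the partition with $k-1$ singleton blocks, invoke Theorem~\ref{2.1} to get $(k-1)$-homogeneity (hence $(k-1)$-ut) outside the five exceptions, and then inspect the exceptions. The paper's own proof of the inspection step is even terser---it simply says ``Inspection of these groups leads to the following result''---so your slightly more explicit handling (primitivity for degree~$5$, Theorem~\ref{th4a} for $\agl(1,7)$, and a \textsf{GAP} check for degree~$9$) is entirely in the same spirit; for the record, the degree-$9$ groups in fact fail the $5$-ut hypothesis, so that branch of your dichotomy is the operative one.
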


If $k>\lfloor\frac{n+1}{2}\rfloor$, then no analogue of the Livingstone--Wagner Theorem can hold. But the situation is actually very simple.

\begin{thm}\label{bigk}
Let $G$ be a subgroup of $\sym$, and let $k$ be an integer satisfying
$\lfloor\frac{n+1}{2}\rfloor < k < n$. Then the following are equivalent:
\begin{enumerate}
\item $G$ has the $k$-universal transversal property;
\item $G$ is $(k-1,k)$-homogeneous;
\item $G$ is $k$-homogeneous.
\end{enumerate}
In particular, if these condiditions hold with $k<n-5$, then $G$ is $\sym$
or $\alt$.
\end{thm}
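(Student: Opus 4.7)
The plan is to establish the cycle $(1)\Rightarrow(2)\Rightarrow(3)\Rightarrow(1)$ and then deduce the final assertion from Livingstone--Wagner together with the classification of finite $6$-transitive groups.

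First, $(1)\Rightarrow(2)$ is the reduction already highlighted in the introduction. Given a $(k-1)$-set $I$ and a $k$-set $J$, take the partition $P$ of $[n]$ whose blocks are the $k-1$ singletons $\{i\}$ for $i\in I$ together with one block $[n]\setminus I$ of size $n-k+1$; every section for $P$ consists of $I$ plus one extra point, so applying the $k$-universal transversal property to $J$ and $P$ yields $g\in G$ with $I\subseteq Jg$, whence $Ig^{-1}\subseteq J$. Conversely, $(3)\Rightarrow(1)$ is essentially a tautology: a section for any $k$-partition is itself a $k$-subset, and a $k$-homogeneous group acts transitively on $k$-subsets, so the orbit of any $k$-set already contains every section.

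The main step is $(2)\Rightarrow(3)$, which I would deduce from Theorem~\ref{2.1} via the duality between $(i,j)$- and $(n-j,n-i)$-homogeneity. Setting $j:=n-k$, $(k-1,k)$-homogeneity becomes $(j,j+1)$-homogeneity, and the hypothesis $k>\lfloor(n+1)/2\rfloor$ forces $2k>n+1$, i.e.\ $n>2j+1$, so the degree assumption of Theorem~\ref{2.1} is satisfied strictly. The three exceptional families of that theorem all live on the boundary $n=2j+1$ (namely $(n,j)\in\{(5,2),(7,3),(9,4)\}$), so none of them can occur in our range. Therefore $G$ is $j$-homogeneous, which is the same as $k$-homogeneous by complementation.

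Finally, suppose these equivalent conditions hold with $k<n-5$. Set $l:=n-k$; then $l\geq 6$, and since $k>n/2$ we also have $l<n/2$. By duality $G$ is $l$-homogeneous, and the Livingstone--Wagner theorem~\cite{lw} upgrades $l$-homogeneity to $l$-transitivity for $l\geq 5$ in the range $l\leq n/2$. Since $l\geq 6$, the standard CFSG classification of $6$-transitive groups forces $\alt\leq G$, so $G\in\{\alt,\sym\}$. The only point that requires careful bookkeeping is the verification that all three exceptions of Theorem~\ref{2.1} sit precisely at the excluded boundary $n=2j+1$, which the strict inequality $k>\lfloor(n+1)/2\rfloor$ rules out; no other genuine obstacle is in evidence.
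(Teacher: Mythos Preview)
Your proof is correct and follows essentially the same route as the paper: the implication $(2)\Rightarrow(3)$ via the duality $(k-1,k)\leftrightarrow(n-k,n-k+1)$ and Theorem~\ref{2.1}, with the exceptions ruled out by the strict inequality. Your explicit observation that all three exceptional families sit exactly on the boundary $n=2j+1$, together with the spelled-out Livingstone--Wagner plus CFSG argument for the final clause, are details the paper leaves implicit.
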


\begin{proof}
(1) implies (2): The proof of this given earlier does not depend on the
value of $k$.

(2) implies (3): Let $G$ be $(k-1,k)$-homogeneous. Then $G$ is
$(n-k,n-k+1)$-homogeneous. Because of the inequality on $k$, the exceptional
groups in Theorem~\ref{th6} do not occur; so $G$ is $(n-k)$-homogeneous, and
hence $k$-homogeneous.

(3) implies (1): Clear.
\end{proof}

\section{The classification of the groups possessing the $k$-universal transversal property}\label{kutp}

Let $G$ be a permutation group of degree~$n$. For $k\leq n$, we say that
$G$ possesses the $k$-universal transversal property if the orbit of any $k$-set contains a section for every $k$ partition of $[n]$. A group is said to have the universal transversal property if it has the $k$-universal transversal property for all $k\leq n$. In \cite{ArMiSc} the following theorem is  proved.

\begin{thm} \label{superlemma}
A subgroup $G$ of $\sym$ has the universal
transversal property  if and only if one of the following is valid:
\begin{enumerate}
    \item[\rm (i)] $n=5$ and $G\cong C_5,\ \dihed{5},$ or $\agl(1,5)$;
    \item[\rm (ii)] $n=6$ and $G\cong \psl(2,5)$ or $\pgl(2,5)$;
    \item[\rm (iii)]  $n=7$ and $G\cong\agl(1,7)$;
    \item[\rm (iv)] $n=8$ and $G\cong\pgl(2,7)$;
    \item[\rm (v)] $n=9$ and $G\cong\psl(2,8)$ or $\pgaml(2,8)$;
    \item[\rm (vi)] $G=\alt$ or $\sym$.
\end{enumerate}
\end{thm}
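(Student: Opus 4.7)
The plan is to reduce the universal transversal property to a finite list of per-$k$ constraints and then intersect the classifications already assembled earlier in the paper. The cases $k=1$ and $k=n$ are vacuous; for $\lfloor(n+1)/2\rfloor<k\le n-1$ Theorem \ref{bigk} converts the $k$-universal transversal property into $k$-homogeneity; and for $2\le k\le \lfloor(n+1)/2\rfloor$ one can appeal to Theorems \ref{th4a}--\ref{th4f}. Thus $G$ has the universal transversal property if and only if it simultaneously satisfies a short explicit list of conditions drawn from those theorems.

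For the sufficiency direction I would observe that when $\alt\le G$ the group is $(n-2)$-transitive and so $k$-homogeneous for every $k\le n-2$; the orbit of any $k$-set is then all of $\binom{[n]}{k}$ and trivially contains a section for every $k$-partition. For the small exceptions (i)--(v) I would verify the universal transversal property directly: for each listed $G$ and each $k\le n/2$, enumerate the $G$-orbits on $k$-subsets and check that each orbit meets the set of sections of every $k$-partition. For $n\le 9$ this is a short finite check, feasible by hand or in \textsf{GAP}.

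For necessity the argument splits on $n$. When $n\ge 11$, we have $\lfloor(n+1)/2\rfloor\ge 6$; picking any $k$ with $6\le k\le \lfloor(n+1)/2\rfloor$, Theorem \ref{th4b} tells us that the $k$-universal transversal property forces $\alt\le G$, so $G\in\{\alt,\sym\}$. When $n\le 10$, Theorem \ref{th4f} first restricts $G$ to the (short) list of primitive groups of that degree; I would then test each candidate against the $k$-UTP criteria from Theorem \ref{th4a} for $2\le k\le \lfloor(n+1)/2\rfloor$ and against the $k$-homogeneity conditions from Theorem \ref{bigk} for the larger $k$. Exactly the groups appearing in (i)--(vi) will survive this sieve.

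The hard part is the small-degree bookkeeping. In each degree $n\in\{5,\ldots,10\}$ the list of primitive groups is short but nontrivial, and one must confirm the $k$-UTP for \emph{every} relevant $k$ simultaneously rather than in isolation. The most delicate cases are the sporadic $2$-transitive groups such as $\pgl(2,7)$ at $n=8$ and $\psl(2,8)$, $\pgaml(2,8)$ at $n=9$, where $k$-homogeneity is not available at the larger $k$ values and one must therefore locate a section inside a specific non-full $G$-orbit on $k$-sets. This is precisely the kind of orbit-section computation that the $k$-UTP classifications of the present paper were designed to carry out, and it is the step most likely to require careful ad hoc work rather than a uniform general bound.
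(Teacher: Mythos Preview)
Your plan is sound, but you should be aware that in this paper Theorem~\ref{superlemma} is not proved at all: it is quoted verbatim from \cite{ArMiSc} (it is the same result as Theorem~\ref{th2}, restated in the language of the universal transversal property). The proof in \cite{ArMiSc}, as summarised here around Theorem~\ref{th3}, is rather different from yours: it isolates the single critical value $k=\lfloor n/2\rfloor$ and shows directly that for $n\ge 12$ the $\lfloor n/2\rfloor$-ut property already forces $\alt\le G$, and then handles $n\le 11$ by inspection. It does not rely on the full per-$k$ classification that this paper develops afterwards.

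Your route---intersecting Theorems~\ref{th4a}--\ref{th4f} over small $k$ with Theorem~\ref{bigk} over large $k$---is a legitimate and non-circular alternative, since the proofs of those theorems do not invoke Theorem~\ref{superlemma}. In effect you are recovering the earlier \cite{ArMiSc} result as a corollary of the sharper classification obtained here. One caution on the bookkeeping: the bound in Theorem~\ref{th4a} is stated as $k\le\lfloor(n+1)/2\rfloor$, while the combined Theorem~\ref{RegularAndKut} uses $k\le\lfloor n/2\rfloor$, and for odd $n$ the boundary value $k=(n+1)/2$ matters (for instance $C_5$ has the $3$-ut property without being $3$-homogeneous, yet is not listed among the $k=3$ exceptions in Theorem~\ref{th4a}). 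So when you carry out the small-degree sieve, do it against the primitive-group list and direct orbit/section checks rather than reading off the exception list mechanically; the list in Theorem~\ref{superlemma} is exactly what you will obtain.
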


The goal of the following two sections is to prove that with some exceptions, the groups possessing the $k$-universal transversal property are $k$-homogeneous (for $2\leq k\leq \lfloor \frac{n}{2}\rfloor$).
We abbreviate ``$k$-universal transversal property'' to
\textit{$k$-ut property}.

Our main results are Theorems \ref{th4a}--\ref{th4f} stated in the introduction, and that we now state in a single theorem.

\begin{thm}\label{RegularAndKut}
Let $n\geq 11$, $G\leq \sym$ and let $2\leq k\leq \lfloor \frac{n}{2}\rfloor$.
\begin{enumerate}
	\item If $6\leq k\leq \lfloor \frac{n}{2}\rfloor$, then the following are equivalent: 
		\begin{enumerate}
			\item $G$ has the $k$-ut property; 
			\item $\alt\leq G$. 
		\end{enumerate}
		\vspace{0.3cm}
	\item The following are equivalent: 
		\begin{enumerate}
			\item $G$ has the $5$-ut property; 
			\item $G$ is 5-homogeneous, or  $n=33$ and $G=\pgaml(2,32)$.
		\end{enumerate}
\vspace{0.3cm}
\item  
		\begin{enumerate}
			\item if $G$ is $4$-homogenous,  or $n=12$ and $G=\M_{11}$, then $G$ has the $4$-ut property. 
			\item Apart from the possible exception of some $G$ with $\psl(2,q)\leq G\leq \pgaml(2,q)$,   				the 	groups listed above are the only ones having the $4$-ut property. 
		\end{enumerate}
\vspace{0.3cm}
\item 
	\begin{enumerate}
			\item if $G$ is $3$-homogenous,  or one of the following groups 
					\begin{enumerate}
						\item $\psl(2,q)\leq G\leq \psigl(2,q)$, where $q\equiv 1 (\mbox{mod }4)$; 
						\item $\Sp(2d,2)$ with $d\geq 3$, in either of its $2$-transitive representations;
						\item $2^{{2d}}:\Sp(2d,2)$;
						\item Higman--Sims;
						\item $\Co_{3}$;
						\item $2^{{6}}:\G_{2}(2)$ and its subgroup of index $2$;
						\item $\agl(1,p)$ where, for all $c\in \GF(p)^{{*}}$, $|\langle -1,c,c-1\rangle|=p-1$; 
					\end{enumerate}
					 then $G$ has the $3$-ut property. 
			\item Apart from the possible exception of the Suzuki groups $\Sz(q)$,   				the 			groups listed above are the only ones having the 3-ut property. 
		\end{enumerate}
		\vspace{0.3cm}
\item 	The following are equivalent. 
\begin{enumerate}
			\item $G$ has the $2$-ut property; 
			\item $G$ is primitive. 
		\end{enumerate}
\end{enumerate}
For $n<11$, a group  $G\leq \sym$ with the $k$-universal transversal property is
$k$-homogeneous, with the following exceptions: 
\begin{enumerate}
\item[(i)] $n=5$,  $C_{5}$ or $D(2*5)$ and $k=2$;
\item[(ii)] $n=6$, $\psl(2,5)$ and $k=3$;
\item[(iii)]  $n=7$,  $C_{7}$ or $D(2*7)$, and $k=2$; or 
$\agl(1,7)$ and $k=3$;
\item[(iv)] $n=8$, $\pgl(2,7)$ and $k=4$;
\item[(v)]  $n=9$,  $3^{{2}}:4$ or $3^{{2}}:D(2*4)$ and $k=2$;
\item[(vi)] $n=10$, $\mathcal{A}_{5} $ or $\mathcal{S}_{5}$ and $k=2$; or 
$\psl(2,9)$ or $\mathcal{S}_{6}$   and $k=3$.
\end{enumerate}
\end{thm}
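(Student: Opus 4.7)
The plan is a case analysis that reduces, via the classification of finite $2$-transitive groups (and ultimately the Classification of Finite Simple Groups), to checking $k$-ut on a short list of candidate families. I would proceed in four phases.

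First, I would establish the necessary conditions. The introduction already shows that $k$-ut implies $(k-1,k)$-homogeneity (by specializing to partitions of type $(1,\ldots,1,n-k+1)$), so Theorem~\ref{2.1} gives that $G$ is $(k-1)$-homogeneous outside a short list of small-degree exceptions; iterating Corollary~\ref{cor1b} forces $G$ to be $\ell$-homogeneous for all $2\leq\ell\leq k-1$, and in particular primitive. Part~(5) of the theorem is the easy equivalence: if $G$ is imprimitive with block $B$, then two points of $B$ together with the partition $\{B,[n]\setminus B\}$ violate $2$-ut, while if $G$ is primitive, an orbital-graph argument shows any alleged $I$ and $P=\{X,Y\}$ forcing every $Ig\subseteq X$ or $Ig\subseteq Y$ would make $X$ and $Y$ blocks of imprimitivity.

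Second, I would dispatch $k\geq 6$: $(k-1)$-homogeneity with $k-1\geq 5$ gives $(k-1)$-transitivity by Livingstone--Wagner, and by CFSG the only $5$-transitive groups of degree $\geq 11$ are $\alt$, $\sym$, $\M_{12}$, and $\M_{24}$. For $k\geq 6$ we additionally need $k$-homogeneity, which eliminates the Mathieu groups and leaves only $\alt\leq G$, yielding item~(1). For $k=5$, $\M_{12}$ and $\M_{24}$ re-enter as the only non-alternating $5$-homogeneous possibilities, and I would verify their $5$-ut property directly using the Steiner systems $S(5,6,12)$ and $S(5,8,24)$. The exceptional entry $G=\pgaml(2,32)$ of degree $33$ must be shown both to satisfy the $5$-ut property (by exhibiting enough cross-ratio/Galois orbit representatives on $5$-subsets of the projective line to section every $5$-partition) and to fail $5$-homogeneity (which follows from Kantor's list).

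Third, for $k\in\{3,4\}$ I would work through the CFSG list of $2$-transitive groups family by family: affine groups $\agl(d,p)$, projective groups $\psl(d,q)\leq G\leq\pgaml(d,q)$, the symplectic groups $\Sp(2d,2)$ in their two $2$-transitive representations together with the affine extension $2^{2d}{:}\Sp(2d,2)$, unitary/Suzuki/Ree groups, and the sporadic $2$-transitive groups ($\HS$, $\Co_3$, $2^{6}{:}\G_{2}(2)$ and its index-$2$ subgroup, and stray Mathieu cases). For each candidate the strategy is uniform: either exhibit a $k$-set $K$ and a $k$-partition $P$ with no $g\in G$ sending $K$ to a section of $P$ (using affine dependence, line/hyperplane incidence, cross-ratio invariants, or, for symplectic groups, the ternary relation on quadratic forms already exploited in Section~2), or else produce a combinatorial certificate that every orbit on $k$-sets contains a section of every $k$-partition. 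The number-theoretic hypothesis in item~(4)(a)(vii) is exactly what falls out of analyzing $\agl(1,p)$: the $3$-ut condition translates into the requirement that $\langle -1,c,c-1\rangle=\GF(p)^{\times}$ for every $c\neq 0,1$.

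Fourth, the small-degree residue ($n<11$) and the ad hoc entries at the end of the statement are dispatched by direct GAP computation. The principal obstacle is resolving the families openly left in items~(3)(b) and~(4)(b): deciding $4$-ut for the remaining $\psl(2,q)\leq G\leq\pgaml(2,q)$ with $q$ prime or $q=2^{p}$, and deciding $3$-ut for the Suzuki groups $\Sz(q)$ and for certain index-$2$ subgroups of $\agl(1,p)$. Both reduce to delicate orbit-counting problems — the structure of cross-ratio orbits under a twisted Galois--linear action in the projective case, and the Frobenius/Sylow-$2$ structure of $\Sz(q)$ on $3$-subsets of a $(q^{2}+1)$-point set — and the paper explicitly flags these as open.
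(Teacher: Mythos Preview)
Your overall architecture matches the paper's: reduce $k$-ut to $(k-1,k)$-homogeneity via Theorem~\ref{2.1}, land in the list of $(k-1)$-homogeneous groups, then do a case analysis through the CFSG classification of $2$-transitive groups. Part~(5) and the $n<11$ computations are handled as in the paper.

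There is, however, a genuine inversion in your treatment of the Mathieu groups and Steiner systems. For $k=5$ the groups $\M_{12}$ and $\M_{24}$ are $5$-homogeneous, so $5$-ut is automatic and needs no verification; the Steiner systems $S(5,6,12)$ and $S(5,8,24)$ play the \emph{opposite} role, namely they witness that $\M_{12}$ and $\M_{24}$ \emph{fail} the $6$-ut property. Your sentence ``for $k\geq 6$ we additionally need $k$-homogeneity, which eliminates the Mathieu groups'' is not an argument: that $6$-ut forces $6$-homogeneity is precisely what must be proved for these groups. The paper's workhorse here is a clean negative lemma you do not state: if $G$ preserves a Steiner system $S(k-1,l,n)$ with $k-1<l<n$, then $G$ fails $k$-ut (take $k-2$ singletons inside a block, the rest of that block as one further part, and everything else as the last part). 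This single lemma eliminates, in one stroke, subgroups of $\agl(d,p)$ ($d>1$, $p>2$) for $k=3$, $3$-transitive subgroups of $\agl(d,2)$ for $k=4$, subgroups of $\pgaml(d,q)$ ($d>2$) for $k=3$, the unitary and Ree groups for $k=3$, large subfamilies of $\pgaml(2,q)$ for $k=4$, and all the Mathieu groups at the relevant $k$. Without it your Phase~3 case analysis has no uniform negative tool and your Phase~2 argument for $k=6$ is incomplete.

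On the positive side the paper's main device, which you only obliquely gesture at via ``the ternary relation on quadratic forms'', is the combination of $2$-primitivity plus (almost) generous $2$-transitivity with the regular two-graph inequalities on $\lambda$: this is what proves $3$-ut for $\psl(2,q)$ with $q\equiv1\pmod4$, for $\Sp(2d,2)$ in both actions, for $2^{2d}{:}\Sp(2d,2)$, and for $\Co_3$. The remaining positive verifications ($\M_{11}$ of degree $12$, $\pgaml(2,32)$, $2^6{:}\G_2(2)$ and its index-$2$ subgroup, and $\HS$) are done in the paper by explicit GAP algorithms (building up partial partitions and pruning), not by the cross-ratio or structural arguments you propose; a purely theoretical verification of $5$-ut for $\pgaml(2,32)$ along your lines would be new.
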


\begin{proof}
In Sections \ref{kutp} and \ref{exceptional} all the claims for $n\geq 11$ are proved. Regarding $n<11$, all the claims can be easily checked with GAP. 

For $n=5,6$ all the possible groups appear in the statement of the theorem.  

For $n=7$ the group $7:3$ is $2$-homogenous, but does not have the $3$-ut property as the orbit of $\{1,2,7\}$ has no transversal  for $\{ \{ 1 \}, \{2, 7 \}, \{ 3, 6, 4, 5 \} \}$. The $2$-homogeneous group $L(3,2)$ also does not have the $3$-ut property as the orbit of $\{1,2,4\}$ does not contain a transversal for $\{ \{ 1 \}, \{ 2, 4 \}, \{ 3, 7, 6, 5 \} \}$.

For $n=8$, the $3$-homogenous groups $\agl(1,8)$, $\agaml(1,8)$, $\asl(3,2)$ do not have the $4$-ut property. There is no section for the partition $\{ \{ 1 \}, \{ 2 \}, \{ 3, 4 \}, \{ 5, 6, 7, 8 \} \}$ in the  orbits of the set $\{1,2,3,4\}$. 
The $3$-homogenous group $\psl(2,7)$  does not have the $4$-ut property as the orbit of $\{1,2,3,5\}$ has no transversal for the partition $\{\{ 1, 5, 7, 8 \}, \{ 2, 6 \}, \{ 3 \}, \{ 4 \} \}$.

For $n=9$, the $2$-homogenous groups $\M_{9}$, $\agl(1,9)$, $\agaml(1,9)$, $\asl(2,3)$, $\agl(2,3)$ do not have the $3$-ut property. Their orbits on the set $\{1,2,3\}$ contain no section for the partition $\{ \{ 1 \}, \{ 2,3 \}, \{ 4 , 5, 6, 7, 8,9 \} \}$.

For $n=10$ the $3$-homogeneous groups $\pgl(2,9)$, $\M_{10}$, $\pgaml(2,9)$   do not have the $4$-ut property as the partition   $\{ \{ 1 \}, \{ 2 \}, \{ 3, 10 \}, \{ 4, 9, 8, 6, 7, 5 \} \}$ has no transversal in the orbit of $\{1,2,3,10\}$.
\end{proof}

\begin{prop}\label{critical}
\begin{enumerate}
\item A $k$-homogeneous group has the $k$-ut property.
\item A group with the $k$-ut property is $(k-1,k)$-homogeneous, and hence
is $(k-1)$-homogeneous or one of the five exceptions to Theorem~\ref{th6}.
\item If $k>n/2$, then a group has the $k$-ut property if and only if it is $k$-homogeneous
or one of the exceptions to Theorem~\ref{th6}.
\end{enumerate}
\end{prop}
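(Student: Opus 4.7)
The plan is to handle the three clauses in sequence, using only the $(k-1,k)$-homogeneity classification (Theorem~\ref{th6}), complementation symmetry, and Theorem~\ref{bigk}.

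For (1), given a $k$-set $I$ and a $k$-partition $P$, I pick any section $S$ of $P$; since $|S|=k$, the assumption of $k$-homogeneity supplies $g\in G$ with $Ig=S$, which is a section for $P$ by construction.

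For (2), I would derive $(k-1,k)$-homogeneity from the $k$-ut property via a partition-with-singletons trick. Given a $(k-1)$-set $K$ and a $k$-set $L$, consider the $k$-partition
\[
P = \bigl\{\, \{x\} : x \in K \,\bigr\} \cup \bigl\{\, [n]\setminus K \,\bigr\}.
\]
Apply the $k$-ut property to $I=L$: there exists $g\in G$ such that $Lg$ is a section of $P$, and since each of the $k-1$ singleton blocks contributes its unique element to that section, $K\subseteq Lg$, i.e.\ $Kg^{-1}\subseteq L$. This establishes $(k-1,k)$-homogeneity, and the remaining conclusion of (2) is then immediate from Theorem~\ref{th6}.

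For the forward direction of (3), the $k$-ut property gives $(k-1,k)$-homogeneity by (2). If $k>\lfloor(n+1)/2\rfloor$, Theorem~\ref{bigk} immediately upgrades this to $k$-homogeneity and rules out any exceptional group. The remaining subcase is $n$ odd with $k=(n+1)/2$: here Theorem~\ref{th6} gives that $G$ is $(k-1)$-homogeneous or one of the three exceptional families, but since $k-1=n-k$ in this regime, complementation is a $G$-equivariant bijection between $(k-1)$-subsets and $k$-subsets, so $(k-1)$-homogeneity is equivalent to $k$-homogeneity. Hence $G$ is $k$-homogeneous or one of the exceptions, as required.

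For the converse direction of (3), $k$-homogeneity gives $k$-ut by (1). The three exceptional families of Theorem~\ref{th6} all satisfy $k=(n+1)/2$ with $n\in\{5,7,9\}$, so they lie in the range $k>n/2$ and must be verified to possess the $k$-ut property. The families $\{C_5,D(2*5)\}$ at $(n,k)=(5,3)$ and $\{\agl(1,7)\}$ at $(7,4)$ appear in Theorem~\ref{superlemma} with the full universal transversal property, so they have the $k$-ut property a fortiori. The case $(n,k)=(9,5)$ for $\asl(2,3)$ and $\agl(2,3)$ is the main obstacle, as these two groups are absent from Theorem~\ref{superlemma}; I would dispose of it by a finite check---only the five partition-types of $9$ into $5$ parts need to be considered, and the orbits of each group on the $\binom{9}{5}=126$ five-subsets can be analysed using the affine-plane structure on $[9]$, or simply automated in GAP.
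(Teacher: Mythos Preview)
Your proof is correct and follows essentially the same route as the paper. Parts (1) and (2) match exactly what the paper points to in the preambles to Theorems~\ref{th5} and~\ref{th6}; for (3), the paper's one-line reference to ``the preamble to Theorem~\ref{kkp1}'' is far less explicit than your argument via Theorem~\ref{bigk} plus complementation at $k=(n+1)/2$, and in particular the paper never spells out the converse verification for the five exceptional groups---so your appeal to Theorem~\ref{superlemma} for degrees $5$ and $7$, together with the proposed finite check for $\asl(2,3)$ and $\agl(2,3)$ at $(n,k)=(9,5)$, is in fact more careful than the original.
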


The first part is trivial; the second is contained in the preamble to
Theorem~\ref{th5} and the result of Theorem~\ref{th6}; the third is contained
in the preamble to Theorem~\ref{kkp1}.

Our aim is to determine, as completely as possible, the groups with the
$k$-ut property which are not $k$-homogeneous. For $k=2$, there are many
such groups and no hope of a determination:

\begin{prop}
A subgroup of $\sym$ has the $2$-ut property if and only if it is primitive.
\label{primlem}
\end{prop}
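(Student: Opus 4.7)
The plan is to prove both directions by tying a failure of the $2$-ut property to the existence of a nontrivial $G$-invariant partition of $[n]$, which is the classical obstruction to primitivity.

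For the forward direction, suppose $G$ is not primitive. Then $G$ preserves some nontrivial partition of $[n]$, and we may take $A$ to be a block of this partition of size at least $2$ (an orbit in the intransitive case, a nontrivial block of imprimitivity in the transitive imprimitive case). By the choice of $A$, for every $g\in G$ either $Ag=A$ or $Ag\cap A=\emptyset$. Picking $\{a,b\}\subseteq A$, we have $\{a,b\}g\subseteq Ag$ for every $g$, so $\{a,b\}g$ is always contained in $A$ or disjoint from $A$, and is therefore never a section for $\{A,[n]\setminus A\}$. Hence $G$ fails the $2$-ut property.

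For the converse, assume $G$ lacks the $2$-ut property: there exist distinct $a,b\in[n]$ and a subset $A\subseteq[n]$ with $0<|A|<n$ such that $\{a,b\}g$ is never a section for $\{A,[n]\setminus A\}$. Equivalently, for every $g\in G$ one has $\{a,b\}\subseteq Ag$ or $\{a,b\}\cap Ag=\emptyset$ (replace $g$ by $g^{-1}$ and note that $g$ ranges over $G$ iff $g^{-1}$ does). Define a relation $\sim$ on $[n]$ by declaring $x\sim y$ whenever, for every $g\in G$, $\{x,y\}\subseteq Ag$ or $\{x,y\}\cap Ag=\emptyset$. This is an equivalence relation, since it is the kernel of the map $[n]\to\{0,1\}^G$ that sends each point to the record of its memberships in the translates $Ag$. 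It is also $G$-invariant, because for any $h\in G$ the pair $\{xh,yh\}$ behaves with respect to $Ag$ exactly as $\{x,y\}$ behaves with respect to $Agh^{-1}$, so the condition for $xh\sim yh$ is just a reindexing of the condition for $x\sim y$.

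The $\sim$-classes therefore form a $G$-invariant partition of $[n]$. This partition is nontrivial: the class of $a$ contains $b\neq a$ by hypothesis, so is not a singleton; and taking $g$ to be the identity shows that any $x\in A$ and $y\in[n]\setminus A$ satisfy $x\not\sim y$, so the partition is not the single class $[n]$. This contradicts primitivity, completing the proof. The only delicate point in the whole argument is the verification that $\sim$ is a $G$-invariant equivalence relation; once this is in place, nontriviality of the induced partition comes for free from the pair $\{a,b\}$ (giving a class of size at least $2$) and from the properness of $A$ (ruling out the one-class partition).
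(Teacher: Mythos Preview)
Your proof is correct, and it takes a genuinely different route from the paper's. The paper appeals to Higman's theorem that a transitive group is primitive if and only if every non-diagonal orbital graph is connected, and then observes that connectedness of a graph is precisely the statement that every $2$-partition of the vertex set admits an edge as a section; since the $G$-orbits on $2$-sets are exactly the edge sets of the symmetrised orbital graphs, the equivalence follows in two lines.

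Your argument, by contrast, is self-contained: in one direction you exhibit a bad pair and a bad $2$-partition from a block of a $G$-invariant partition; in the other you manufacture a nontrivial $G$-congruence directly from a witness $(\{a,b\},A)$ to the failure of $2$-ut, namely the kernel of the map $x\mapsto (\mathbf{1}_{x\in Ag})_{g\in G}$. What you gain is that no external result is invoked, and the construction of the congruence is explicit. What the paper's approach buys is brevity and a conceptual link to the orbital-graph framework that is reused elsewhere in the paper (for instance in the argument for the $3$-ut property via $2$-primitivity). Your converse is, in effect, a direct proof of the relevant special case of Higman's theorem, rephrased in terms of the set $A$ rather than the orbit of the pair $\{a,b\}$.

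One small cosmetic point: your parenthetical ``an orbit in the intransitive case'' does not cover the degenerate situation where $G$ is trivial (all orbits singletons), but your main sentence already handles this, since for $n\geq 3$ the trivial group preserves, say, $\{\{1,2\},\{3\},\ldots,\{n\}\}$, which is a nontrivial partition with a block of size $2$.
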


\begin{proof}
By Higman's Theorem~\cite{higman}, $G$ is primitive if and only if all the
non-diagonal orbital graphs are connected. (These are just the graphs whose
edge sets are the orbits on $2$-sets.) But a graph is connected if and
only if, for every 2-partition of the vertices, there is an edge which is
a section for the partition.
\end{proof}

However, for $k>2$, we are in a stronger position, due to the following pair
of results, one negative, one positive:

\begin{prop}
If $G$ is a group of automorphisms of a Steiner system $S(k-1,l,n)$
with $k-1<l<n$, then $G$ does not have the $k$-ut property.
\end{prop}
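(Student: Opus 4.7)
The plan is to exhibit, for any Steiner system $S(k-1,l,n)$ with $k-1<l<n$, an explicit $k$-set $I$ and an explicit $k$-partition $P$ of $[n]$ such that no translate $Ig$ is a section for $P$. The driving idea is that, because $G$ preserves the blocks, if $I$ is chosen inside some block then every $Ig$ is also contained in some block; we then design $P$ so that no $k$-subset of any block can be a section of it.

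The concrete choice is as follows. Fix a block $B_0$ of the Steiner system. Since $|B_0|=l\geq k$, pick any $k$-subset $I\subseteq B_0$. The complement $B_0^c:=[n]\setminus B_0$ is nonempty because $l<n$, and $B_0$ can be split into $k-1$ nonempty pieces $C_1,\ldots ,C_{k-1}$ because $l\geq k$. Let $P=\{B_0^c,C_1,\ldots ,C_{k-1}\}$; this is a partition of $[n]$ into exactly $k$ parts.

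The main claim is that no $g\in G$ makes $Ig$ a section for $P$. Since $g$ is an automorphism of the Steiner system, the image $B_0 g$ is again a block of size $l$, and $Ig\subseteq B_0 g$. Suppose, for contradiction, that $Ig$ is a section for $P$; then $Ig$ consists of exactly one point $y\in B_0^c$ together with a $(k-1)$-subset $S$ picking one point from each $C_i$. By construction $S\subseteq B_0$, and also $S\subseteq Ig\subseteq B_0 g$, so both $B_0$ and $B_0 g$ are blocks containing the $(k-1)$-set $S$. The defining property of $S(k-1,l,n)$ forces the block through $S$ to be unique, hence $B_0 g=B_0$; but then $Ig\subseteq B_0$ contradicts $y\in Ig\cap B_0^c$. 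Therefore the $G$-orbit of $I$ contains no section for $P$, and $G$ lacks the $k$-ut property.

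There is no serious obstacle here; the whole argument hinges on a single trick, namely choosing $P$ so that all ``small'' parts $C_1,\ldots ,C_{k-1}$ lie inside a single block, thereby pinning down any hypothetical section by the uniqueness of the block through a $(k-1)$-set. The only inequalities needed ($B_0^c\neq\emptyset$ and the existence of a splitting of $B_0$ into $k-1$ nonempty parts) use nothing beyond the hypothesis $k-1<l<n$.
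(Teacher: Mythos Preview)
Your argument is correct and follows essentially the same route as the paper: pick a $k$-set $I$ inside a block $B_0$, and take as partition the pieces of $B_0$ together with its complement, so that any section must contain $k-1$ points of $B_0$ and one point outside, forcing by the Steiner property a contradiction with $Ig$ lying in a single block. The only cosmetic difference is that the paper chooses the specific splitting of $B_0$ into $k-2$ singletons and one part of size $l-k+2$, whereas you allow an arbitrary splitting into $k-1$ nonempty parts; the logic is identical.
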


(A Steiner system is a collection of blocks or subsets of size $l$ of the
$n$-set so that each $(k-1)$-set is contained in a unique block.)

\begin{proof}
Take the partition with $k-2$ singleton parts, one part of size
$l-k+2$ consisting of the remaining points in some block containing these
points, and one part consisting of everything else. A $k$-set which is
contained in a block cannot be a section for this partition.
\end{proof}

This shows that the following $(k-1)$-homogeneous groups do not have the $k$-ut property:
\begin{enumerate}
\item Subgroups of $\agl(d,p)$, if  $d>1$ and $p>2$, with $k=3$ (these
groups preserve the geometry of affine points and lines).
\item $3$-transitive subgroups of $\agl(d,2)$ for $d>2$, with $k=4$ (these
preserve geometry of affine points and planes).
\item Subgroups of $\pgaml(d,q)$ with $d>2$, for $k=3$ (these preserve the
geometry of projective points and lines).
\item The unitary and Ree groups, with $k=3$ (these preserve unitals).
\item Subgroups of $\pgaml(2,q)$ containing $\psl(2,q)$, where $q$ is a proper
power of an odd prime or $q=2^e$ where $e$ is not prime, with $k=4$ (these
preserve circle geometries). Note that we can exclude subgroups of
$\psigl(2,q)$ containing $\psl(2,q)$ for $q\equiv1$ (mod~$4$), since these
groups are not $3$-homogeneous.
\item The Mathieu groups $\M_{11}$, $\M_{12}$, $\M_{22}$ (and $\aut(\M_{22})$),
$\M_{23}$ and $\M_{24}$ in their usual representations, with $k=5,6,4,5,6$,
respectively, as these preserve famous Steiner systems.
\end{enumerate}

A permutation group $G$ is \emph{$k$-primitive} if it is $(k-1)$-transitive
and the pointwise stabiliser of $k-1$ points acts primitively on the
remaining points. It is \emph{generously $k$-transitive} if the setwise
stabiliser of $k+1$ points induces the symmetric group on these points, and
is \emph{almost generously $k$-transitive} if the setwise stabiliser of
$k+1$ points induces the symmetric or alternating group on them 
(Neumann~\cite{pmn:generous}).

\begin{lem}
If a permutation group $G$ is $k$-primitive and almost generously
$k$-transitive, then every orbit of $G$ on $(k+1)$-sets contains a section 
for any $(k+1)$-partition of which $k-1$ of the parts are singletons.
\end{lem}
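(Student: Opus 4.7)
The plan is to deploy the two halves of the $k$-primitivity hypothesis in sequence. I would first use the $(k-1)$-transitivity part of $k$-primitivity to slot $k-1$ points of a given $(k+1)$-set $S$ onto the $k-1$ prescribed singletons of the partition; then I would use the primitivity of the resulting pointwise stabiliser on the remaining $n-k+1$ points, together with Higman's theorem on connectedness of orbital graphs, to push the other two points of $S$ across the two non-singleton parts.

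In more detail, write the partition as $P=\{\{a_1\},\ldots,\{a_{k-1}\},A,B\}$ with $A,B$ nonempty, and write $S$ as $\{t_1,\ldots,t_{k-1},u,v\}$. By $(k-1)$-transitivity there is $g_0\in G$ with $t_ig_0=a_i$ for every $i$; the images $u_0:=ug_0$ and $v_0:=vg_0$ then both lie in $Y:=[n]\setminus\{a_1,\ldots,a_{k-1}\}=A\sqcup B$. The pointwise stabiliser $H:=G_{a_1,\ldots,a_{k-1}}$ acts primitively on $Y$ by the definition of $k$-primitivity. It therefore suffices to find $h\in H$ such that $\{u_0h,v_0h\}$ meets both $A$ and $B$, since then $g:=g_0h$ satisfies $Sg=\{a_1,\ldots,a_{k-1},u_0h,v_0h\}$, a section of $P$.

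The existence of such an $h$ is the heart of the matter. Consider the graph $\Gamma$ on vertex set $Y$ whose edges form the $H$-orbit of the $2$-set $\{u_0,v_0\}$; since $u_0\ne v_0$, this is the symmetrised orbital graph of a non-diagonal orbital of $H$. If no $H$-image of $\{u_0,v_0\}$ crossed the bipartition $A\sqcup B$, then $\Gamma$ would contain no edge between $A$ and $B$ and hence be disconnected. But $H$ is primitive on $Y$, so Higman's theorem forces $\Gamma$ to be connected, a contradiction.

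The only point that requires care is the correct form of Higman's theorem, ensuring connectedness of the undirected orbital graph (not merely strong connectedness of the directed one); this is standard but worth stating precisely. Interestingly, the almost generous $k$-transitivity half of the hypothesis is not called upon in this argument: $k$-primitivity alone suffices for this particular claim. I would flag the almost generous hypothesis as presumably serving later applications of the lemma, where finer control on the orbit structure of $(k+1)$-sets may be needed.
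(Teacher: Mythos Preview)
Your argument is correct and follows the same underlying route as the paper: both reduce to Higman's connectedness criterion applied to the primitive action of the $(k-1)$-point stabiliser $H$ on the complement $Y$ of the singletons. The paper first invokes almost generous $k$-transitivity to assert a natural correspondence between $G$-orbits on $(k+1)$-sets and $H$-orbits on $2$-subsets of $Y$, and then appeals to the earlier proposition that primitivity is equivalent to the $2$-ut property. Your direct construction---pick any labelling of $S$, carry $k-1$ of its points onto the singletons by $(k-1)$-transitivity, and then apply Higman to the single symmetrised $H$-orbital graph through the remaining pair---bypasses that correspondence entirely. Your closing observation is also right: $k$-primitivity alone suffices for the conclusion as stated; the almost generous $k$-transitivity hypothesis is used by the paper only to make the orbit correspondence a genuine bijection, something the argument does not actually require.
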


\begin{proof}
In an almost generously $k$-transitive group, each orbit on $(k+1)$-sets
corresponds in a natural way to an orbit of the $(k-1)$-point stabiliser on
pairs of points outside the given $k-1$ points. Now the proof concludes as
in Proposition~\ref{primlem}.
\end{proof}

\begin{prop}\label{two-graph-prop}
Each of the following $2$-transitive groups $G$ has the $3$-ut property:
\begin{enumerate}
\item $\psl(2,q)\le G\le\psigl(2,q)$, where $q\equiv1$ ($\mathrm{mod}\;4$);
\item $\Sp(2d,2)$ with $d\ge3$, in either of its $2$-transitive representations;
\item $2^{2d}\colon\Sp(2d,2)$;
\item $\Co_3$.
\end{enumerate}
\end{prop}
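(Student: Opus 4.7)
The plan is to verify the $3$-ut property for each of the four families by exploiting the common feature that $G$ is a $2$-transitive group of automorphisms of a regular two-graph $\Delta$ on its domain. Since $G$ is $2$-transitive but not $3$-homogeneous in each case, it has exactly two orbits on $3$-sets, namely the triples in $\Delta$ and the triples not in $\Delta$; the $3$-ut property thus amounts to showing that for every $3$-partition $(A,B,C)$ of $\interval{n}$ with nonempty parts, there is a triple of each type meeting each part in exactly one point.

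First I would handle $3$-partitions having at least one singleton part via the preceding lemma. For each of the four families $G$ is $2$-primitive: the point stabilizer of $\psl(2,q) \le G \le \psigl(2,q)$ is the Borel (a Frobenius group on $\GF(q)$), which is primitive, while the rank-$3$ point stabilizers of $\Co_3$, $\Sp(2d,2)$, and the affine extension $2^{{2d}}:\Sp(2d,2)$ are known to be primitive on the complement of the fixed point. Each family is also generously $2$-transitive, since in the associated two-graph the setwise stabilizer of any pair contains an involution swapping its two points. The lemma then supplies a section in each of the two orbits.

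For a $3$-partition $(A, B, C)$ with $|A|, |B|, |C| \ge 2$, I would use the fact that for any pair $\{x, y\}$ the regularity of $\Delta$ yields a constant number $a$ of points $z$ with $\{x, y, z\} \in \Delta$; and because the group has permutation rank $3$ via the two-graph, these $a$ points (respectively, the remaining $n - 2 - a$) are precisely the two orbits of $G_{xy}$ on $\interval{n} \setminus \{x, y\}$. Fixing $x \in A$ and $y \in B$, a coherent (respectively incoherent) section through $\{x, y\}$ exists iff $C$ meets the corresponding $G_{xy}$-orbit. Using the explicit parameters of the four regular two-graphs (as tabulated in Taylor), a direct double count of $\sum_{(x,y) \in A \times B}|\{z \in C : \{x,y,z\} \in \Delta\}|$ against its complementary sum shows that for every partition with all parts of size at least $2$ both types of sections must exist.

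The hard part is this last step: positivity of the two counts is not a formal consequence of the regularity of $\Delta$ alone, so one must invoke specific numerical values. For large $n$ in each family the counting produces a comfortable margin, but for small-degree instances (such as $\psl(2,q)$ with small $q \equiv 1 \pmod{4}$, $\Sp(6,2)$, and $\Co_3$ itself) the counting is supplemented with direct inspection of the explicit structure of the two-graph, along the lines of the computational verifications used earlier in the paper.
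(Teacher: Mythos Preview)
Your overall framework matches the paper's: reduce to partitions with no singleton part via $2$-primitivity and generous $2$-transitivity, then exploit the regular two-graph structure. However, the decisive step for partitions $\{X,Y,Z\}$ with all parts of size $\ge2$ is handled quite differently, and your version has a gap.

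The paper does not attempt a double count. Instead it uses the \emph{second} defining axiom of a two-graph --- that any four points contain an even number of triples of $T$ --- which you never invoke. Assuming the orbit $T$ contains no section, take $x_1,x_2$ in the smallest part $X$ and $y\in Y$, $z\in Z$; since $x_1yz,\,x_2yz\notin T$, the parity condition on $\{x_1,x_2,y,z\}$ forces $x_1x_2y\in T\Leftrightarrow x_1x_2z\in T$. A short switching argument then shows that either \emph{all} triples $x_1x_2w$ with $w\in Y\cup Z$ lie in $T$ (so $\lambda\ge|Y|+|Z|\ge 2n/3$), or \emph{none} do (so $\lambda\le|X|-2\le n/3-2$). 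One then simply checks that neither inequality is met by the listed values of $\lambda$; no separate treatment of small degrees is needed.

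By contrast, your sum $\sum_{(x,y)\in A\times B}|\{z\in C:\{x,y,z\}\in\Delta\}|$ cannot be controlled using only $\lambda$-regularity (as you yourself concede), and you do not say what replaces it --- so as written the argument is incomplete precisely at the point where the paper's parity trick does the work. Two smaller slips: the phrase ``permutation rank~$3$'' is wrong for a $2$-transitive group (rank~$2$); presumably you mean that $G_{xy}$ has two orbits on the remaining points, which is true but does not by itself force positivity of your count. And your justification of generous $2$-transitivity argues about involutions on \emph{pairs}, whereas the condition concerns the setwise stabiliser of a \emph{triple} inducing $S_3$.
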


\begin{proof}
Each of these groups is $2$-primitive and generously $2$-transitive, so by
the previous lemma it is enough to consider partitions $\{X,Y,Z\}$ in which no part is
a singleton. We assume that $X$ is the smallest part.

Also, each of these groups has just two orbits on $3$-sets, and each orbit $T$
is a \emph{regular two-graph} (Taylor~\cite{taylor}), that is,
\begin{itemize}
\item any two points lie in exactly $\lambda$ members of $T$, for some
$\lambda$;
\item any four points contain an even number of members of $T$.
\end{itemize}

Now suppose that the orbit $T$ contains no section for the partition
$\{X,Y,Z\}$. For any $x_1,x_2\in X$,
$y\in Y$ and $z\in Z$, we have $x_1yz, x_2yz\notin T$, and so both or neither
of $x_1x_2y$ and $x_1x_2z$ belong to $T$. Suppose that $x_1x_2y\in T$, for
some $y\in Y$. Then $x_1x_2z\in T$ for all $z\in Z$, and $x_1x_2y\in T$ for all
$y\in Y$. Hence we have $\lambda \ge |Y|+|Z| \ge 2n/3$. In the contrary case,
neither of these triples belong to $T$, and $\lambda \le |X|-2 \le n/3 - 2$.

However, for these groups, it is easily checked that these inequalities fail
in all cases:
\begin{itemize}
\item for $\psl(2,q)$, $n=q+1$, $\lambda=(q-1)/2$;
\item for $\Sp(2d,2)$, $n = 2^{2d-1} \pm 2^{d-1}$, $\lambda=2^{2d-2}$ or
$\lambda=2^{2d-2}\pm 2^{d-1}-2$;
\item for $2^{2d} \colon \Sp(2d,2)$, $n = 2^{2d}$, $\lambda = 2^{2d-1}$ or
$\lambda=2^{2d-1}-2$;
\item for $\Co_3$, $n=276$, $\lambda=112$ or $\lambda=162$.
\end{itemize}
\end{proof}

These results, together with computation for $n\le 11$, resolve
the question of the $k$-ut property for all groups $G$ which
are $(k-1)$-homogeneous but not $k$-homogeneous, but for the following exceptional cases:

\begin{enumerate}
\item $k=3$:
  \begin{enumerate}
  \item $\agl(1,p)$, $p$ prime, or its subgroup of index $2$ if $p \equiv 3$
 (mod~$4$);
  \item $2^6 \colon \G_2(2)$ and its subgroup of index $2$;
  \item $\Sz(q)$;
  \item The Higman--Sims group.
  \end{enumerate}
\item $k=4$:
  \begin{enumerate}
  \item $\psl(2,q) \le G \le\pgaml(2,q)$, with either $q$ prime (except 
  $\psl(2,q)$ for $q\equiv1$ (mod~$4$), which is not $3$-homogeneous), or $q=2^p$ for $p$ prime;
  \item $\M_{11}$, degree $12$.
  \end{enumerate}
\item $k=5$:
  \begin{enumerate}
  \item $\pgaml(2,32)$, degree $33$.
  \end{enumerate}
\end{enumerate}

\section{The Exceptional Cases}\label{exceptional}

In this section we are going to look at the exceptional  cases listed at the end of the previous section.

\subsection{The group $\M_{11}$ \ut{4}}

The group $\M_{{11}}$ of degree 12 can easily be handled with GAP.  This group has two orbits on $4$-sets (in GAP are the orbits of $\{1,2,3,4\}$ and of $\{1,2,3,7\}$). GAP checks in less than one minute that the orbit of each one of these sets contains a section for all possible $4$-partitions of $\{1,\ldots,12\}$.

\subsection{Some general results}\label{general}

In this subsection we are going to prove a number of auxiliary results.  We start by associating a graph to a $t$-homogeneous group $G\leq \sym$ as follows. Let $B\subseteq [n]$ and $c\in [n]$ such that $|B|=t-1$. Then we define the following graph on the points in $[n]\setminus B$:
\[
G(B,c)=\{ \{x,y\}  \mid  \{x,y\}\cup B \in (\{1,\ldots ,t,c\})G \}.
\]   
In the particular case of $B=\{b\}$, we will write $G(b,c)$ rather than $G(\{b\},c)$.  

Observe that for every $g\in G$ we have $G(B,c)\cong G(Bg,c)$. Therefore, as $t$ is larger than the order of $B$, it follows that $G(B,c)$ is connected if and only if $G(B',c)$ is connected, for all $B'$ such that $|B'|=|B|=t-1$. 

\begin{prop}\label{connected} 
If a $t$-homogeneous group $G\leq \sym$ \ut{(t+1)}, then $G(B,c)$ is connected, for all $B\subset [n]$ (with $|B|=t-1$) and all $c\in [n]$.
\end{prop}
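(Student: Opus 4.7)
The strategy is a direct contradiction: assuming $G(B,c)$ is disconnected, I will exhibit a $(t+1)$-set $I$ together with a $(t+1)$-partition $P$ such that no element in the $G$-orbit of $I$ is a section for $P$, contradicting the $(t+1)$-ut property.

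First, by the symmetry observation recorded just before the statement (namely $G(B,c) \cong G(Bg,c)$ for all $g \in G$) and the fact that $G$ is $t$-homogeneous on $t$-subsets (so transitive on $(t-1)$-subsets a fortiori), it is enough to work with $B = \{1,\ldots,t-1\}$. The case $c \in B$ is vacuous (the orbit consists of $t$-sets, no edge arises), so assume $c \notin B$, making $\{1,\ldots,t,c\}$ a genuine $(t+1)$-set. Observe first that $G(B,c)$ does contain edges: by $t$-homogeneity, applied to an element of $G$ sending $\{1,\ldots,t-1\}$ to $B$, the image of $\{1,\ldots,t,c\}$ has the form $B \cup \{x,y\}$ with $x,y \in [n]\setminus B$, contributing $\{x,y\}$ as an edge.

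Now suppose for contradiction that $G(B,c)$ is disconnected. Then I can partition its vertex set $[n]\setminus B$ into two non-empty parts $V_1, V_2$ with no edges between them (take $V_1$ to be one connected component and $V_2$ the union of the rest). Define the partition
\[
P = \bigl\{\{1\},\{2\},\ldots,\{t-1\},V_1,V_2\bigr\},
\]
which has $t+1$ parts covering $[n]$, and set $I = \{1,\ldots,t,c\}$.

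By the $(t+1)$-ut property applied to $I$ and $P$, there exists $g \in G$ such that $Ig$ is a section for $P$. The singleton parts force each of $1,\ldots,t-1$ to lie in $Ig$, so $B \subseteq Ig$; and $Ig$ must meet each of $V_1,V_2$ in exactly one point. Hence $Ig = B \cup \{x,y\}$ for some $x \in V_1$ and $y \in V_2$. But then by the definition of $G(B,c)$, the pair $\{x,y\}$ is an edge of $G(B,c)$ joining $V_1$ and $V_2$, contradicting the choice of $V_1, V_2$. Thus $G(B,c)$ is connected.

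The construction is forced, so there is no real obstacle; the only care required is the reduction to the case $B = \{1,\ldots,t-1\}$ and the observation that the graph has at least one edge (so that the notion of ``disconnected'' is substantive), both of which follow quickly from $t$-homogeneity.
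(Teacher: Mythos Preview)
Your argument is correct and is essentially identical to the paper's: assume $G(B,c)$ is disconnected, split $[n]\setminus B$ into a component $D$ and its complement, form the $(t+1)$-partition $(\{b_1\},\ldots,\{b_{t-1}\},D,\text{rest})$, and observe that any section in the orbit of $\{1,\ldots,t,c\}$ containing $B$ would yield an edge crossing the split. Your preliminary reduction to $B=\{1,\ldots,t-1\}$ and the edge-existence remark are harmless but unnecessary; one small slip is that ``$c\notin B$'' does not by itself make $\{1,\ldots,t,c\}$ a $(t+1)$-set---you also need $c\neq t$---though this degenerate case is implicitly excluded in context.
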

\begin{proof}
If $G(B,c)$ is not connected, then there exists 
a connected component $D$ contained in the graph. Now consider the partition $P=(\{b_{1}\},\dots,\{b_{t-1}\},D,R)$, where $R$ contains the remaining elements, that is, $R=[n]\setminus (B\cup D)$, and $B=\{b_{1},\ldots,b_{t-1}\}$. 

Any section for the partition $P$ must contain $B$. Therefore any set containing $B$ and in the orbit of  $\{1,\ldots ,t,c\}$ must be of the form $\{x,y\}\cup B$ and hence $x$ and $y$ are connected in $G(B,c)$. Thus, either $x\in D$ and hence $y\in D$ (because $D$ is a connected component of $G(B,c)$) so that     $\{x,y\}\cup B$ is not a section for $P$; or $x\not\in D$ and hence $y\not \in D$ thus implying $x,y\in R$. Again $\{x,y\}\cup B$ is not a section for $P$. The result follows. 
\end{proof}

This proposition immediately implies that, for example, the 2-homogeneous group $G=\agl(1,17)$ \notut{3}. 

In fact, according to GAP, the graph $G(\{17\},4)$ has the following two connected components 
$\{ \{1, 2, 3, 4, 6, 9, 10, 15 \}, \{ 5, 7, 8, 11, 12, 13, 14, 16 \}\}$. And it can be checked, in fact, that the orbit of $\{1,2,4\}$ under $G$ has no section for the partition $$P=( \{1, 2, 3, 4, 6, 9, 10, 15 \}, \{ 5, 7, 8, 11, 12, 13, 14, 16 \},\{17\}).$$ (More on these groups below.)

For the particular case of the $3$-ut property, another important graph is the following: for a set $C\subseteq [n]$ and $c\in [n]$, we have 
\[
\Gamma(C,c)=\bigcup_{{b\in C}}\left(G(b,c)\cap \left[([n]\setminus C)\times([n]\setminus C)\right] \right).
\]

Let $G$ be a group admitting a  bad 3-partition, that is, $P=(A,C,A')$ such that no set in the orbit of $\{1,2,c\}$ is a section for $P$. This means that the distance from $A$ to $A'$ in the graph $\Gamma(C,c)$ must be infinite. In fact, if it is not infinite, it must be one as every vertex in this graph either is on $A$ or in $A'$. That means that there exist $a\in A$ and $a'\in A'$ that are connected in $\Gamma(C,c)$. But, by definition, $\Gamma$ is a union of subgraphs of $G(b,c)$ and hence it follows that for some $b\in C$ we have that $\{a,a'\}$ is an edge in $G(b,c)$. Thus $b\in C$, $a\in A$, $a'\in A'$ and hence $\{a,b,a'\}$ is a section for $P$ that belongs to the orbit of $\{1,2,c\}$, by the definition of $G(b,c)$.   		  It is proved that bad partitions $P=(A,C,A')$ induce graphs   $\Gamma(C,c)$ in which the distance from $A$ to $A'$ is infinite. 

This observation leads to the following procedure that (if it ends) allows to check that a group $G$ \ut{3}. We already know that if $G(n,c)$ is disconnected, then the group \notut{3}. The question is whether there exists a group $G$ with connected graph $G(n,c)$, but that \notut{k}.  Therefore we assume that $G(n,c)$ is connected and start with three sets $(A,C,A')$, all contained in $[n]$, such that (without loss of generality because we only consider 2-homogeneous transitive groups) $\{1\}= A$ and $\{n\}= C$, where $n$ is the degree of the group $G$. We are going to try to build a bad partition and hence include in $A\cup A'$ and $C$ all the elements that must necessarily be in each one of this sets provided that we want $\Gamma(C,c)$ to be disconnected on $A\cup A'$. So we proceed as follows (denote the distance from $a$ to $b$ in graph $Gr$ by $D_{{Gr}}(a,b)$): for a fixed $d\in [n]$ that will be the distance in $G(n,c)$ from $A$ to $A'$,

\begin{enumerate}
\item put in $A'$ an element $y\in [n]$ such that $D_{G(n,c)}(1,y)=d$; add to $A\cup A'$ the set $\{x\in [n] \mid \{1,y\}\in G(x,c)\}$. (Observe that these $x$ must be in $A\cup A'$ because if one of them is  in $C$, then $\{1,y,x\}$ would be a section for $(A,C,A')$ and hence any oversets of $A,A',C$ yielding a partition of $[n]$ would have a section in the orbit of $\{1,2,c\}$.)  
\item add to $C$ the set $\{x\in [n]\mid 1-\ldots-x-\ldots -y\}$, where $a-b$ means that $\{a,b\}$ is an edge in $G(n,c)$. (Observe that if one of these $x$ is not in $C$, then $D_{G(n,c)}(A,B)<d$, contrary to our assumption.)
\item check if $A\cup A'$ is connected under $\Gamma(C,c)$; if it is connected, then all the overpartions of $(A,C,A')$ are good; if it is not connected, then
\item add to $A\cup A'$ the set $\{x\in [n]\mid A\cup A' \mbox{ is connected in } \Gamma(C\cup \{x\},n)\}$ (as such $x$ cannot go to $C$). 
\item $A\cup C\supseteq \{x\mid D_{G(n,c)}(A,x)<d\}$ and $A'\cup C\supseteq \{x\mid D_{G(n,c)}(A',x)<d\}$; 
\item add to $A$ the set $(A\cup C)\cap (A\cup A')$.
\item add to $A'$ the set $(A'\cup C)\cap (A\cup A')$.
\item add to $C$ the set $(A\cup C)\cap (C\cup A')$.
\item go to (3). 
\end{enumerate}

\subsection{The two exceptional groups of degree 64}\label{64}
The group $H=2^{{6}} :\G_{2}(2)$ and its subgroup $G$, of index 2, have $G(64,c)$ connected. Therefore the guess is that they have the  $3$-ut property. To test that conjecture we are going to apply to $G$ the procedure outlined at the end of the previous subsection, as if $G$ \ut{3}, then the overgroup $H$ also has. On the set   $[64]$ the group $G$ has three orbits on $3$-sets, namely (in GAP) $\{1,2,3\}$, $\{1,2,5\}$ and $\{1,2,29\}$.

The points $y$ such that  $D_{G(64,3)}(1,y)=2$ are 
\begingroup
\everymath{\scriptstyle}
\tiny
$$\{2, 3, 4, 13, 14, 15, 16, 21, 22, 23, 24, 25, 26, 27, 28, 37, 38, 39, 40, 41, 42, 43, 44, 49, 50, 51, 52, 61, 62, 63\}.$$ 
\endgroup
To all of them the procedure ends yielding the result that at a certain point $A$ and $A'$ are connected under $\Gamma(C,3)$. And there are no $y$ such that  $D_{G(64,3)}(1,y)>2$.

The points $y$ such that  $D_{G(64,5)}(1,y)=2$ are 
\begingroup
\everymath{\scriptstyle}
\tiny
$$\begin{array}{c}\{5, 6, 7, 8, 9, 10, 11, 12, 15, 17, 18, 19, 20, 23, 25, 29, 30, 31, 32, 33, 34, \\ 35, 36, 40, 42, 45, 46, 47, 48, 50, 53, 54, 55, 56, 57, 58, 59, 60 \}.\end{array}$$ 
\endgroup
To all of them the procedure ends yielding the result that at a certain point $A$ and $A'$ are connected under $\Gamma(C,5)$. And there are no $y$ such that  $D_{G(64,5)}(1,y)>2$.

 Regarding $\{1,2,29\}$ (the $3$-set with the smallest orbit), the points $y$ such that  $D_{G(64,29)}(1,y)=2$ are 
\begingroup
\everymath{\scriptstyle}
\tiny
$$\{2, 3, 4, 13, 14, 16, 21, 22, 24, 26, 27, 28, 37, 38, 39, 41, 43, 44, 49, 51, 52, 61, 62, 63 \}.$$ 
\endgroup
To all of them the procedure ends yielding the result that at a certain point $A$ and $A'$ are connected under $\Gamma(C,29)$. In this case  there are also some $y$ such that  $D_{G(64,29)}(1,y)=3$:
\begingroup
\everymath{\scriptstyle}
\tiny
$$\{  5, 6, 7, 8, 9, 10, 11, 12, 17, 18, 19, 20, 29, 30, 31, 32, 33, 34, 35, 36, 45, 46, 47, 48, 53, 54, 55, 56, 57, 58, 59, 60  \}.$$
\endgroup
To all of them the procedure ends yielding the result that at a certain point $A$ and $A'$ are connected under $\Gamma(C,29)$. And there are no $y$ such that  $D_{G(64,29)}(1,y)>3$.

It is checked that $G$ \ut{3} and hence  the same holds for $H$.

      \subsection{The group $\mbox{P}\Gamma\mbox{L}(2,32)$}\label{pgaml}

The graph $G(\{1,2,3\},c)$ is connected and hence the guess is that this group \ut{5}. 
It is too big to be tested directly and the algorithm used in subsection \ref{64} does not work here. Therefore we used the following algorithm (in GAP) to prove that indeed this group \ut{5}. 

$G$ has three orbits on 5-sets: $\{1,\ldots,5\}$, $\{1,\ldots,4,6\}$ and $\{1,\ldots,4,10\}$ are representatives. Suppose we want to prove that $\{1,\ldots,4,10\}G$ contains a section for all partitions. To do that we are going to try to build a bad partition (one that has no section in  $\{1,\ldots,4,10\}G$).
\begin{enumerate}
\item We start with the subpartition $p:=\{\{1\},\{2\},\{3\},\{4\},\{6\}\}$ and add the number $5$  (the smallest not in $\{1,2,3,4,6\}$) to the blocks of $p$ in the five possible ways. We get $5$ subpartions and remove all the partitions that have a section in $\{1,\ldots,4,10\}G$. According to GAP all the $5$ are left.
\item Repeat the previous step with $7$ being included in each one of the previous five subpartitions, and then removing the ones that have a section in $\{1,\ldots,4,10\}G$. According to GAP all the $25$ are left.
\item Repeating with $8$, we end up with $115$ subpartitions. 
\item With $9$, we get $337$ subpartitions. 
\item Repeating with $10,\ldots,19$, we get, respectively, the following number of subpartitions $$719,1052,1065,1193,912,229,211,137,50,2.$$ 
 And it does not matter where we put $20$, we end always with a partition admitting a section in $\{1,\ldots,4,10\}G$. This proves that $\{1,\ldots,4,10\}G$ contains a section for all the partitions in which $1,2,3,4,6$ are all in different blocks. 
\item  Then we start with a subpartition $\{\{1\},\{2\},\{3\},\{4\},\{5\}\}$ and the orbit $\{1,\ldots,4,10\}G$, and follow the previous algorithm. Again we get that $\{1,\ldots,4,10\}G$ contains a section for all 5-partitions (in which $1,2,3,4,5$ are in different blocks). This proves that $\{1,\ldots,4,10\}G$ contains a section for all the 5-partitions since there is only another orbit of partitions: those in which $1,2,3,4,10$ are in different blocks and for those $\{1,\ldots,4,10\}G$ trivially contains a section.
\item Finally we repeat the same algorithm with $\{1,\ldots,5\}G$ and $\{1,\ldots,4,6\}G$. The worse case is when $P:=\{\{1\},\{2\},\{3\},\{4\},\{6\}\}$ and we have the orbit of $\{1,\ldots,4,10\}$.
\end{enumerate}

\subsection{The Higman--Sims group }

Let $G$ be the Higman--Sims group, a group of order $2^{{9}}\cdot 3^{{2}}\cdot 5^{{3}}\cdot 7\cdot 11$ and degree $176$.
Then $G$ has three orbits $O_1$, $O_2$, $O_3$ on ordered triples, with cardinalities  $176\cdot175\cdot t$, for $t=12,72,90$; representatives of the orbits are $(1,2,16)$, $(1,2,3)$ and $(1,2,6)$ respectively.

According to Taylor~\cite{taylor}, $O_2$ is a regular two-graph with $\lambda=72$, and exactly the same argument as in Proposition~\ref{two-graph-prop} shows that every $3$-partition has a section belonging to the orbit $O_2$. So we only have to deal with the orbits $O_1$ and $O_3$.

Moreover, every $3$-partition is equivalent under $G$ to one with $1,2,3$ in different parts; so we started with the subpartition $\{\{1\},\{2\},\{3\}\}$ and applied the same algorithm used in the preceding subsection, concluding that this group has the $3$-ut property. 

  \subsection{The groups $\agl(1,p)$ for $p$ prime}\label{numbertheory}
  
  We proved above that if the graph $G(B,c)$ is not connected for some $B$ and $c$, then $G$ \notut{k}, for $k=|B|+2$. Unfortunately, the groups $\agl(1,p)$ have disconnected graphs for some $p$, and connected graphs for      
    other $p$. Therefore we need  sharper results.  The aim of this section is to prove them. 
    
  We start by providing a characterization of  connectedness in this setting.   
    We denote by $\GF(p)$ the field with $p$ elements and by $\GF(p)^{{*}}$ its non-zero elements.  
 \begin{prop}
 Let $G=\agl(1,p)$, with $p$ prime. If  $G(0,c)$ is not connected then $H:=\langle -1,c,c-1\rangle\subset \GF(p)^{{*}}$. In such a case, the orbit of $\{0,1,c\}$ under $\agl(1,p)$ has no section for the partition $\{\{0\},H,\mbox{rest}\}$. 
\end{prop}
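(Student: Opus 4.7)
My plan is to identify $G(0,c)$ with a Cayley graph on the multiplicative group $\GF(p)^{*}$ and then read off both assertions from the structure of the subgroup generated by the connection set.

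First I would set up coordinates so that $G = \agl(1,p)$ acts on $\GF(p)$ by $x\mapsto ax+b$; then the stabilizer of $0$ is $\GF(p)^{*}$ acting by multiplication. Since $G$ is sharply $2$-transitive, the orbit of an ordered triple $(\alpha,\beta,\gamma)$ of pairwise distinct points is parametrised by the invariant $(\gamma-\alpha)/(\beta-\alpha)$. Running over the six orderings of $\{0,1,c\}$ yields the anharmonic orbit
$$S_{c} \ :=\ \{\,c,\ 1/c,\ 1-c,\ 1/(1-c),\ (c-1)/c,\ c/(c-1)\,\},$$
and the same computation applied to a triple $\{0,x,y\}$ shows that its anharmonic orbit depends only on $r=y/x$. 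Hence $\{0,x,y\}$ lies in the orbit of $\{0,1,c\}$ iff $y/x\in S_{c}$.

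Second, since $S_{c}$ is closed under inversion, the edge set of $G(0,c)$ is precisely that of the Cayley graph on $\GF(p)^{*}$ with connection set $S_{c}$, whose connected components are the cosets of $\langle S_{c}\rangle$. A short calculation gives $\langle S_{c}\rangle = H$: the inclusion $\langle S_{c}\rangle\subseteq H$ is immediate from the definitions, while for the reverse inclusion $c$ and $1-c$ are already in $S_{c}$, and the identity
$$-1 \ =\ c\cdot\frac{c-1}{c}\cdot\frac{1}{1-c}$$
displays $-1$ as a product of elements of $S_{c}$, so $-1\in\langle S_{c}\rangle$ and therefore $H\subseteq\langle S_{c}\rangle$. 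Consequently $G(0,c)$ is connected iff $H=\GF(p)^{*}$, which establishes the first assertion.

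For the second assertion, assume $G(0,c)$ is disconnected and set $R:=\GF(p)^{*}\setminus H$, which is nonempty by the first part. Consider the partition $P=\{\{0\},H,R\}$. Any section for $P$ has the form $\{0,h,r\}$ with $h\in H$ and $r\in R$; if such a triple were in the orbit of $\{0,1,c\}$, the cross-ratio criterion would force $r/h\in S_{c}\subseteq H$, hence $r\in hH=H$, contradicting $r\in R$. So no section exists, as claimed. The only step with any content is the first (identifying the cross-ratio invariant of the $G$-orbit on $3$-sets); once that correspondence is in hand, both conclusions fall out by direct coset arguments, and I do not anticipate any significant obstacle.
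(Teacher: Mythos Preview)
Your proof is correct and, at its core, is the same computation as the paper's---both amount to showing that the edge set of $G(0,c)$ on $\GF(p)^{*}$ consists of pairs $\{x,y\}$ with $y/x$ in the anharmonic set $S_{c}$, and that $\langle S_{c}\rangle=\langle -1,c,c-1\rangle$. The paper carries this out by parametrising the edges through $0$ as $\{\alpha,c\alpha\}$, $\{-\alpha,(c-1)\alpha\}$, $\{(1-c)\alpha,-c\alpha\}$ and then showing that any connected component $A$ satisfies $A=cA=(c-1)A=-A$; you package the same information as a Cayley graph on $\GF(p)^{*}$ with connection set $S_{c}$, which makes the conclusion (components are cosets of $\langle S_{c}\rangle$) automatic. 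Your identity $-1=c\cdot\frac{c-1}{c}\cdot\frac{1}{1-c}$ is a particularly clean way to see $-1\in\langle S_{c}\rangle$, and your formulation yields the biconditional (connected iff $H=\GF(p)^{*}$) in one stroke, which the paper establishes only in the subsequent theorem. The second assertion is handled identically in both: once you know edges through $0$ join points in the same $H$-coset, no triple $\{0,h,r\}$ with $h\in H$, $r\notin H$ can lie in the orbit of $\{0,1,c\}$.
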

\begin{proof}
Observe that $\{b,x,y\}$ is in the orbit of $\{0,1,c\}$ if and only if $$\{x,y\}\in \{\{\alpha+b,c\alpha+b\},\{b-\alpha, (c-1)\alpha +b\},\{(1-c)\alpha +b, b-c\alpha\}\mid \alpha \in \GF(p)^{{*}}\}.$$

In particular, for $b=0$, we have that 

$$\{x,y\}\in \{\{\alpha,c\alpha\},\{-\alpha, (c-1)\alpha\},\{(1-c)\alpha , -c\alpha\}\mid \alpha \in \GF(p)^{{*}}\}.$$

Now if $G(0,c)$ is not connected, then there exists 
a set $A\subset \GF(p)^{{*}}$ such that for every $\{x,y\}\in G(0,c)$ we have 
\[
x\in A \Leftrightarrow y \in A.
\]
 
In particular, for $\{x,y\}= \{\alpha,c\alpha\}$ we have $\alpha \in A \Leftrightarrow c\alpha \in A$, that is, $A=Ac$. In the same way we get the conditions $A=-A(c-1)$ and $Ac=A(c-1)$. 
Collecting these three conditions we get that: 
\[
A=Ac=A(c-1)=-A.
\] 

Now, for $c_{i}\in \{-1,c,c-1\}$, we have  $A=Ac_{1}=(Ac_{2})c_{1}=\ldots =Ac_{n}\cdots c_{1}$ and hence $A=A\langle -1,c,c-1\rangle$.
Clearly, saying that  {\em there exists a proper subset $A\subset \GF(p)^{{*}}$ such that $A=-A=Ac=A(c-1)$} is equivalent to  saying that {\em the group $\langle -1,c,c-1\rangle$ is strictly contained in $\GF(p)^{{*}}$}. The first claim follows.

Regarding the second claim, any section for the partition must have the form $\{0,c_{1},r\}$, with $c_{1}\in H$, and hence we must have $\{c_{1},r\}\in G(0,c)$. As we saw above, this means that 
$$\{c_{1},r\}\in \{\{\alpha,c\alpha\},\{-\alpha, (c-1)\alpha\},\{(1-c)\alpha , -c\alpha\}\mid \alpha \in \GF(p)^{{*}}\}.$$    
Checking all the possibilities always leads to the conclusion that $r\in H$. 
\end{proof}

The next result is our main result regarding the groups $\agl(1,p)$. 

\begin{thm}
Let $p$ be a prime and let $c\in \GF(p)\setminus \{0,1\}$. Then the following are equivalent:
\begin{enumerate}
\item the orbit of $\{0,1,c\}$ under $\agl(1,p)$ contains a section for all the $3$-partitions of $\{0,\ldots ,p\}$; 
\item $|\langle -1,c,c-1\rangle|=p-1$.
\end{enumerate}
\end{thm}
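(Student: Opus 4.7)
The direction $(1)\Rightarrow(2)$ follows immediately from the preceding proposition. Let $H:=\langle -1,c,c-1\rangle$. If $|H|<p-1$ then $H$ is a proper subgroup of $\GF(p)^{*}$, and the preceding proposition already exhibits $(\{0\},H,\GF(p)\setminus(H\cup\{0\}))$ as a $3$-partition that admits no section in the orbit of $\{0,1,c\}$; so the hypothesis of $(1)$ forces $|H|=p-1$.

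For the converse, my plan is first to recast $G(0,c)$ as a Cayley graph, and then to argue by contradiction. The edges of $G(0,c)$ are exactly the pairs $\{x,y\}\subseteq\GF(p)^{*}$ with $y/x$ lying in the anharmonic orbit $\Omega=\{c,\,1-c,\,1/c,\,1/(1-c),\,(c-1)/c,\,c/(c-1)\}$ of $c$ (the three affine maps sending $0$, $1$ or $c$ to $0$ account for the three pairs of reciprocal ratios in $\Omega$). A short computation---for instance $\frac{c-1}{c}\cdot\frac{1}{1-c}=-\frac{1}{c}$, whence $-1=(-1/c)\cdot c\in\langle\Omega\rangle$---shows $\langle\Omega\rangle=H$. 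Hence $G(0,c)$ is the Cayley graph on the cyclic group $(\GF(p)^{*},\cdot)$ with connection set $\Omega$, and the hypothesis $|H|=p-1$ is precisely the connectedness of this Cayley graph.

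Now suppose for contradiction that $(A,B,C)$ is a bad $3$-partition of $\GF(p)$. Using the $2$-transitivity of $\agl(1,p)$, translate so that $0\in C$. Since $\{a,b,0\}$ belongs to the orbit iff $b/a\in\Omega$, the badness condition at $d=0$ says that no edge of the Cayley graph $G(0,c)$ joins $A$ to $B$. When $C=\{0\}$, we have $A\cup B=\GF(p)^{*}$ with both $A$ and $B$ nonempty, directly contradicting connectedness. For $|C|\ge 2$ one must also use that for every $d\in C$ the translates $A-d$ and $B-d$ are non-adjacent in $G(0,c)$, equivalently $(B-d)\cap(A-d)\cdot\Omega=\emptyset$. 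I would combine these simultaneous constraints with a Cauchy--Davenport/Kneser-type lower bound on the size of the ratio set $B\cdot A^{-1}$ in the cyclic group $\GF(p)^{*}$ (once $|A|+|B|$ exceeds $p-1-|\Omega|$, this set must meet $\Omega$), together with the $S_{3}$-symmetry of the bad-partition condition in $(A,B,C)$, which lets us place $0$ in a part of smallest size so $|C|\le p/3$.

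The main obstacle is precisely the case $|C|\ge 2$: connectedness of a single translate of $G(0,c)$ is no longer sufficient, and one must orchestrate the simultaneous constraints coming from different $d\in C$. The sumset bound sketched above disposes of all partitions with $|C|\le|\Omega|-1$, but a finer argument (using the cyclic structure of $\GF(p)^{*}$ more carefully, and handling by hand the finitely many degenerate $c$ for which $|\Omega|<6$, such as $c=1/2$ or $c$ a primitive sixth root of unity) will be needed to cover the remaining range.
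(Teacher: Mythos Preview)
Your direction $(1)\Rightarrow(2)$ is fine, and your recasting of $G(0,c)$ as the Cayley graph of $(\GF(p)^{*},\cdot)$ with connection set $\Omega$, together with $\langle\Omega\rangle=H$, is exactly the computation that underlies the paper's argument as well; the singleton--part case then drops out of connectedness, as you say.

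The genuine gap is the case where the part containing $0$ has more than one element, which you explicitly leave open. Your proposed attack via Cauchy--Davenport/Kneser bounds is speculative and, more to the point, unnecessary: the paper uses no additive combinatorics whatsoever. Instead of trying to \emph{connect} the two outer parts through the Cayley graph (where the translated middle part may indeed block every path, which is precisely the obstacle you ran into), the paper extracts an \emph{invariance} condition. Writing the bad partition as $(A,C,A')$ and fixing any $b\in C$, one reads off from the explicit edge description of $G(b,c)$---exactly as in the preceding proposition---that the nonempty set $A-b\subseteq\GF(p)^{*}$ satisfies $(A-b)c=A-b$, $(A-b)(c-1)=A-b$ and $-(A-b)=A-b$; hence $A-b$ is $H$-invariant. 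Since $|H|=p-1$, this forces $|A|=|A-b|\ge p-1$, an immediate contradiction. A single basepoint $b$ suffices, and the argument is uniform in $|C|$. The reformulation you were missing is to read the hypothesis $H=\GF(p)^{*}$ not as a path-existence statement but as the statement that $\GF(p)^{*}$ has no proper nonempty $H$-invariant subset.
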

\begin{proof}
We already proved that if $|\langle -1,c,c-1\rangle|<p-1$, then there exists 
a 3-partition $P$ such that no set in the orbit of $\{0,1,c\}$ (under $\agl(1,p)$) is a section for $P$.

Conversely, suppose  that $|\langle -1,c,c-1\rangle|=p-1$ and suppose that there exists 
a bad partition $P=(A,C,A')$. This implies that $\Gamma(C,c)$ is not connected, that is, for all $\{x,y\}\in \Gamma(C,c)$, either $x,y\in A$ or $x,y\in A'$. As $\Gamma(C,c)$ is a union of graphs, this means  that for all $b\in C$, if $\{x,y\}\in G(b,c)$, then  $$x\in A \Leftrightarrow y \in A.$$

Now, repeating the arguments in the previous result we observe that this last condition is equivalent to saying that 
\[
\begin{array}{rccl}
\left(\forall b\in C,\alpha \in \GF(p)^{{*}}\right) &\alpha \in A-b &\Leftrightarrow & c\alpha\in A-b;\\
\left(\forall b\in C,\alpha \in \GF(p)^{{*}}\right) &-\alpha \in A-b &\Leftrightarrow & (c-1)\alpha \in A-b;\\
\left(\forall b\in C,\alpha \in \GF(p)^{{*}}\right) &(1-c)\alpha \in A-b &\Leftrightarrow & -c\alpha \in A-b.
\end{array}
\]

The first equivalence implies that $(A-b)=(A-b)c$, that is, $B=Bc$, for $B=A-b$; the second implies $B=B(c-1)$, and the last implies $B=-Bc^{{-1}}(1-c)^{{-1}}$. All these three together  imply  $B=Bc=B(c-1)=-B$. We already proved that this implies $|B|\geq |\langle -1,c,c-1\rangle|=p-1$, and clearly $|B|=|A|$. A contradiction since in $P=(A,C,A')$ the set $A$ cannot have $p-1$ elements.     
\end{proof}

If $p\equiv 1$ (mod $3$) and $p>7$, then we can take $c$ to be a primitive $6^{\mbox{th}}$ root of the unity; then $c^{2}=c-1$, so $\langle c,c-1,-1\rangle$ is a subgroup of order $6$. Thus $\agl(1,p)$ does not have the $3$-ut property if $p\equiv 1$ (mod $3$) and $p>7$. 

Also, if $p\equiv 1$ (mod $4$) and $p>5$, then there are consecutive quadratic residues in $\{1,\ldots, p-1\}$; if $c$ is the larger of such a pair, then $\langle c,c-1,-1\rangle$ is contained in the subgroup of squares and again $\agl(1,p)$ does not have the $3$-ut property. (If no two consecutive residues exist, then as $1$ and $p-1$ are residues, we see that residues and non-residues must alternate, apart from one pair of non-consecutive non-residues. But consecutive integer squares in $\{1,\ldots,p-1\}$ are an odd distance apart, and so there must be consecutive non-residues between them. So there are only two such squares, namely $1$ and $4$ and so $p=5$.)

Thus, only for primes $p\equiv 11$ (mod $12$) is the question undecided. 

We have not so far considered the subgroup of index $2$ in $\agl(1,p)$, which is $2$-homogeneous for $p\equiv 3$ (mod $4$). But if $p\equiv 7$ (mod $12$), then $\agl(1,p)$ does not have the $3$-ut property, and neither does its subgroup. So these are also undecided only for $p\equiv 11$ (mod $12$).

\subsection{The groups $\psl(2,q)$}

Regarding the groups $\psl(2,q) \le G \le\pgaml(2,q)$, with either $q$ prime (with the exception of
  $\psl(2,q)$ for $q\equiv1$ (mod~$4$), which are not $3$-homogeneous), or $q=2^p$ for $p$ prime, we have the following:
  \begin{enumerate}
\item Suppose $p$ is such that for some $c\in \GF(p)^{{*}}$ we have that $\langle -1,c,c-1\rangle$ is a proper subgroup of $\GF(p)^{{*}}$. Then   there exists 
a 3-partition $P=(A,B,C)$ such that the orbit of $\{0,1,c\}$ under $\agl(1,p)$ has no section for $P$. Therefore the partition $(\{\infty\},A,B,C)$ has no section in the orbit of $\{\infty
,0,1,c\}$ under $\pgl(2,p)$. This follows from the fact that any set in this orbit containing $\infty$ is of the form $\{\infty\}\cup D$ where $D$ is a 3-set in the orbit of $\{0,1,c\}$ under $\agl(1,p)$.    
\item By the previous observation, it follows that, for the same $p$,  $G$ \notut{4} for all $ G\leq \pgl(2,p)$.  
\item for the case $\psl(2,q)$, where $q=2^{{p}}$, we have $\psl(2,q)=\mbox{SL}(2,q)$.   
\item Below we have all the edges of $G(\{0,1\},c)$.
\end{enumerate}
\[
\begin{array}{l}
\left\{ {\frac {c{\alpha}^{2}}{c{\alpha}^{2}-c+1}},2\,{\frac {{\alpha}^{2}}{2\,{\alpha}^{2}-1}} \right\} 
 \left\{ 2\,{\frac {{\alpha}^{2}}{2\,{\alpha}^{2}+1}},2\,{\frac {c{\alpha}^{2}}{2\,c{\alpha}^{2}-c+2}} \right\} 
 \left\{ {\frac {c{\alpha}^{2}}{c{\alpha}^{2}-1+c}},2\,{\frac {c{\alpha}^{2}}{2\,c{\alpha}^{2}-2+c}} \right\} \\
 \left\{ {\frac {{\alpha}^{2} \left( {c}^{2}+2-3\,c \right) }{-3\,c{\alpha}^{2}+2\,{\alpha}^{2}+{c}^{2}{\alpha}^{2}-1}},{\frac {c{\alpha}^{2} \left( c-1 \right) }{1-c{\alpha}^{2}+{c}^{2}{\alpha}^{2}}}\right\} 
 \left\{ {\frac {{\alpha}^{2} \left( {c}^{2}+2-3\,c \right) }{-3\,c{\alpha}^{2}+2\,{\alpha}^{2}+{c}^{2}{\alpha}^{2}+1}},{\frac {c{\alpha}^{2} \left( c-2 \right) }{2-2\,c{\alpha}^{2}+{c}^{2}{\alpha}^{2}}}\right\} \\
\left\{ {\frac {c{\alpha}^{2} \left( c-1 \right) }{-c{\alpha}^{2}-1+{c}^{2}{\alpha}^{2}}},{\frac {c{\alpha}^{2} \left( c-2 \right) }{{c}^{2}{\alpha}^{2}-2\,c{\alpha}^{2}-2}} \right\} 
\left\{ {\frac {{\beta}^{2} \left( c-1 \right) }{c{\beta}^{2}-{\beta}^{2}-2+c}},{\frac {{\beta}^{2} \left( c-1 \right) }{c{\beta}^{2}-{\beta}^{2}-c}} \right\} \\
\left\{ {\frac {{\beta}^{2} \left( c-1 \right) }{c{\beta}^{2}-c-{\beta}^{2}+2}},{\frac {{\beta}^{2}}{{\beta}^{2}-2}} \right\} 
 \left\{ {\frac {{\beta}^{2} \left( c-1 \right) }{c{\beta}^{2}+c-{\beta}^{2}}},{\frac {{\beta}^{2}}{{\beta}^{2}+2}} \right\} 
\left\{ {\frac {{\beta}^{2} \left( c-2 \right) }{c{\beta}^{2}+4\,c-2\,{\beta}^{2}-4}},{\frac {{\beta}^{2}}{{\beta}^{2}+2}} \right\} \\
 \left\{ {\frac {{\beta}^{2} \left( c-2 \right) }{c{\beta}^{2}-2\,{\beta}^{2}+4-4\,c}},{\frac {{\beta}^{2} \left( c-2 \right) }{c{\beta}^{2}-2\,{\beta}^{2}-2\,c}} \right\} 
\left\{ {\frac {{\beta}^{2}}{{\beta}^{2}-2}},{\frac {{\beta}^{2} \left( c-2 \right) }{c{\beta}^{2}+2\,c-2\,{\beta}^{2}}} \right\} 
\end{array}
\]
  
  But it does not seem clear how an approach similar to the one used in $\agl(1,p)$ can be carried out here...
  
  \subsection{The state of the art} Regarding our list of exceptional cases the situation is the following:

\begin{enumerate}
\item $k=3$:
  \begin{enumerate}
  \item regarding $\agl(1,p)$, with $p$ prime, or its subgroup of index $2$ if $p \equiv 3$
 (mod~$4$), we have: 
 \begin{enumerate}
 \item $\agl(1,p)$, or (if $p\equiv 3$ (mod $4$)) its subgroup of index $2$, does not have the $3$-ut property, unless possibly when $p\equiv 11$ (mod $12$);  
 \item in general $\agl(1,p)$ \notut{3} if and only if there exists 
$c\in \GF(p)\setminus \{0,1\}$ such that $|\langle c,c-1,-1\rangle|<p-1$. 
\end{enumerate}
  \item The group $2^6 \colon \G_2(2)$   \ut{3}; the same happens to its subgroup of index $2$.
  \item The groups $\Sz(q)$ appear to have connected $G(b,c)$ and hence, probably, each one of them  \ut{3}.
  \item The Higman--Sims group \ut{3}.
  \end{enumerate}
\item $k=4$:
  \begin{enumerate}
  \item For the groups $\psl(2,q) \le G \le\pgaml(2,q)$, with either $q$ prime, or $q=2^p$ for $p$ prime, the situation is this:
  \begin{enumerate}
  \item if $q$ is prime and there exists 
$c\in GF(p)^{{*}}$ such that $|\langle c,c-1,-1\rangle|<q$, then  $G\leq \pgl(2,q)$ \notut{4}.
\item for $q\equiv1$ (mod~$4$) the group 
  $\psl(2,q)$  is not $3$-homogeneous. 
  \item what happens in the other groups is undecided.  
\end{enumerate}
 \item $\M_{11}$, degree $12$, \ut{4}.
  \end{enumerate}
\item $k=5$:
  \begin{enumerate}
  \item $\pgaml(2,32)$, degree $33$, \ut{5}.
  \end{enumerate}
\end{enumerate}

\section{Regular semigroups}

Arguably, three of the most important classes of semigroups are groups, inverse semigroups and regular semigroups, defined as follows: for a semigroup $S$ we have that
\begin{itemize}
\item  $S$ is a group if for all $a\in S$ there exists a unique $b\in S$ such that $a=aba$;
\item  $S$ is  inverse if for all $a\in S$ there exists a unique $b\in S$ such that $a=aba$ and $b=bab$;
\item  $S$ is regular if for all $a\in S$ there exists $b\in S$ such that $a=aba$.  
\end{itemize}

Recall from the introduction that to a large extent semigroup structure  theory is (almost) all about trying to show how the idempotents shape the structure of the semigroup. Therefore it is no surprise that groups and inverse semigroups can be  characterized by their idempotents:
\begin{itemize}
\item a semigroup is inverse if and only it is regular and the idempotents commute (see \cite[Theorem 5.5.1]{Ho95});  
\item a semigroup is a group if and only if it is regular and contains exactly one idempotent (see \cite[Ex. 3.11]{Ho95}). 
 \end{itemize}   
 
 Inverse semigroups, apart from being the class of (non-group) semigroups with the largest number of books dedicated to them, were introduced by geometers and they keep being very important to them \cite{Pa99}. 
 
   The full transformation semigroup $T(X)$ is regular, and all regular semigroups embed in some $T(X)$; every group embeds in some $T(X)$ as a group of permutations; and every inverse semigroup embeds in some $T(X)$ as an inverse semigroup of quasi-permutations, that is, transformations in which all but one of the kernel classes are singletons. (This follows from the Vagner--Preston representation \cite[Theorem 5.1.7]{Ho95} that maps every inverse semigroup into an isomorphic semigroup of partial bijections on a set; and every partial bijection $f$ on $X$ can be extended to a quasi-permutation $\bar{f}$ on $X\cup \{\infty\}$, defining $x\bar{f}=\infty$, for all $x$ not in the domain of $f$, and $x\bar{f}=xf$ elsewhere, yielding a semigroup of full quasi-permutations isomorphic to the original one.)    

 In the introduction we provided the classification of the groups $G\leq \sym$ such that the semigroup generated by $G$ and any map $a\in \trans$ is regular. Our aim now is to dramatically improve that result by extending it to quasi-permutations and transformations of a given rank. The main observation is the following straightforward lemma. 

\begin{lem}\label{auxl}
Let $a\in \trans$ and let $G\leq \sym$. Then $a$ is regular in $\langle a,G\rangle$ if and only if there exists $g\in G$ such that $\mbox{rank}(aga)=\mbox{rank}(a)$. 
\end{lem}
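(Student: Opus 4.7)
For the forward direction, the plan is to inspect a witness $b \in \langle a, G \rangle$ to regularity of $a$, i.e., $a = aba$, written as a word in the generators $a$ and elements of $G$. In the easy subcase $b \in G$, the element $g := b$ already satisfies $\rank(aga) = \rank(a)$. Otherwise $b$ contains at least one occurrence of $a$; I would factor at its leftmost $a$, writing $b = g_0 \, a \, w$ with $g_0 \in G$ (inserting the identity if $b$ begins with $a$) and $w$ the tail of the word. Substituting into $aba = a$ gives $(a g_0 a)(wa) = a$, forcing $\rank(a) \le \rank(a g_0 a)$; combined with the general inequality $\rank(a g_0 a) \le \rank(a)$, this yields the desired equality with $g := g_0$.

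The converse is the substantive direction. Given $g \in G$ with $\rank(aga) = \rank(a)$, I would use the paper's right-action convention to observe that $\im(aga) = \im(a) \cdot ga \subseteq \im(a)$; equal cardinalities then force $\im(aga) = \im(a)$. In particular, $ga$ restricted to $\im(a)$ is a self-surjection of a finite set, hence a permutation of $\im(a)$. Letting $k$ be its order, $(ga)^k$ fixes $\im(a)$ pointwise, and since $xa \in \im(a)$ for every $x$, we obtain $a \cdot (ga)^k = a$. Setting $b := (ga)^{k-1} g \in \langle a, G \rangle$ (with the convention $b := g$ when $k = 1$), a direct computation gives $aba = a \cdot (ga)^k = a$, establishing regularity.

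There is no serious obstacle here; the authors themselves call the lemma straightforward. The only step that deserves a moment's care is the identification of the correct element—namely $ga$ acting on $\im(a)$, rather than $ag$—together with the finite-set observation that a rank-preserving composition restricts to a \emph{permutation} of the image. Once this is in hand, passing to the order of that permutation is what converts the rank hypothesis into an honest equation $aba = a$ inside $\langle a, G \rangle$.
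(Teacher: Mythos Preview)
Your proof is correct and follows essentially the same approach as the paper's: for the forward direction you factor a regularity witness $b$ at an occurrence of $a$ and compare ranks, and for the converse you observe that $ga$ permutes $\im(a)$ and raise it to the order of that permutation. The only cosmetic difference is that you isolate the leftmost group letter $g_0$ while the paper uses one of the interior letters $g_i$ after first arguing $\rank(b)=\rank(a)$; your version is arguably a shade cleaner.
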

\begin{proof}
Suppose that $a$ is regular in $\langle a,G\rangle$. Then there exists 
$b\in \langle a,G\rangle$ such that $a=aba$. As $\mbox{rank}(uv)\leq \mbox{min}\{\mbox{rank}(u),\mbox{rank}(v)\}$ it follows that $\mbox{rank}(b)\geq \mbox{rank}(a)$. Now, either $b\in G$ and the result follows, or $b=g_{1}ag_{2}\ldots g_{m}ag_{m+1}$ and $\mbox{rank}(b)\leq \mbox{rank}(a)$, that is, $\mbox{rank}(a)=\mbox{rank}(b)$. Therefore, for every $g_{i}\in \{g_{2},\ldots,g_{m}\}$ we have $\mbox{rank}(ag_{i}a)=\mbox{rank}(a)$. It is proved that there exists 
$g\in G$ such that $\mbox{rank}(aga)=\mbox{rank}(a)$. 

Conversely, if $\mbox{rank}(aga)=\mbox{rank}(a)$, then $[n]ag$ is a transversal of $\mbox{Ker}(a)$ and hence $ga$ permutes $[n]a$. Therefore, for some natural $m$, $(ga)^{{m}}$ acts on $[n]a$ as the identity and hence $a(ga)^{{m}}=a$. The lemma follows. 
\end{proof}

In \cite{lmm} a stronger version of the previous result is proved. 

\begin{thm}(\cite[Theorem 2.3 and Corollary 2.4]{lmm})\label{mcalister}
Let $G\leq \sym$ and let $a\in \trans$. Then the following are equivalent:
\begin{enumerate}
\item there exists $g\in G$ such that $\mbox{rank}(aga)=\mbox{rank}(a)$;
\item $a$ is regular in $\langle G,a\rangle$;
\item every $b\in \langle G,a\rangle$, such that $\mbox{rank}(b)=\mbox{rank}(a)$, is regular in $\langle G,a\rangle$.
\end{enumerate}
\end{thm}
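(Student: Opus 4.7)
The implication (3)$\Rightarrow$(2) is trivial (take $b=a$), and (1)$\Leftrightarrow$(2) is precisely Lemma~\ref{auxl}. So the only content to supply is (1)$\Rightarrow$(3), and the plan is to upgrade the construction underlying Lemma~\ref{auxl} so that it witnesses regularity for an arbitrary rank-$k$ element of $\langle G,a\rangle$.

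Set $k=\rank(a)$ and $A=[n]a$. From the fixed $g\in G$ with $\rank(aga)=k$, the proof of Lemma~\ref{auxl} produces $h:=(ga)^{N-1}g\in\langle G,a\rangle$ satisfying $aha=a$, where $N$ is the order of the permutation $(ga)|_A$ of $A$. Now take $b\in\langle G,a\rangle$ with $\rank(b)=k$. If $b\in G$ then $b^{-1}\in G\subseteq\langle G,a\rangle$, so $b$ is regular; otherwise write $b=g_0ag_1a\cdots ag_m$ with $m\geq 1$ and $g_i\in G$. For each interior index $1\leq i\leq m-1$, factor $b$ as $X\cdot(ag_ia)\cdot Y$; since rank is submultiplicative and unaffected by composition with permutations, this forces $\rank(ag_ia)=k$. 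By the very same argument that produced $h$, the map $\pi_i:=(g_ia)|_A$ is then a permutation of $A$ of some order $N_i$, and in particular $a(g_ia)^{N_i}=a$.

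Proposed witness for the regularity of $b$:
\[
c\;:=\;g_m^{-1}\,(g_{m-1}a)^{N_{m-1}-1}\,(g_{m-2}a)^{N_{m-2}-1}\,\cdots\,(g_1a)^{N_1-1}\,h\,g_0^{-1}\;\in\;\langle G,a\rangle,
\]
with the middle product read as the identity when $m=1$. One verifies $bcb=b$ by a telescoping computation: the outer $g_m g_m^{-1}$ and $g_0^{-1}g_0$ cancel, and then for $i=m-1,m-2,\ldots,1$ in turn the factor $(g_ia)^{N_i-1}$ combines with the adjacent $(g_ia)$ from the first copy of $b$ to form $(g_ia)^{N_i}$, which is in turn swallowed by the preceding $a$ via $a(g_ia)^{N_i}=a$. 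Once all interior factors have been consumed one is left with $g_0\cdot a\cdot h\cdot a\,g_1a\cdots ag_m$, and the identity $aha=a$ collapses this to $b$.

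The point requiring most care is the ordering of the factors in $c$: the $(g_ia)^{N_i-1}$ must appear in the \emph{reverse} order $i=m-1,m-2,\ldots,1$, so that the cancellations with the first copy of $b$ unfold from the right inwards; any other order breaks the telescoping. Granted this, the whole proof reduces to the two identities $aha=a$ (from hypothesis (1)) and $a(g_ia)^{N_i}=a$ (from the fact that $g_ia$ acts as a permutation of $A$), and it shows that regularity of $a$ propagates to the entire rank-$k$ stratum of $\langle G,a\rangle$: the interior permutations can always be undone inside the semigroup, while the boundary permutations come for free from $G$.
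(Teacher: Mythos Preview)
The paper does not prove Theorem~\ref{mcalister}; it simply quotes it from \cite[Theorem~2.3 and Corollary~2.4]{lmm}, so there is no in-paper argument to compare against. Your proposal therefore goes beyond what the paper supplies: it gives a complete, self-contained proof of the cited result, built on top of Lemma~\ref{auxl}.

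Your argument is correct. The only points worth checking are the ones you already flagged:
\begin{itemize}
\item Any element of $\langle G,a\rangle$ not lying in $G$ can indeed be written as $g_0 a g_1 a\cdots a g_m$ with $g_i\in G$ (allowing $g_i=e$), since consecutive group elements can be multiplied together.
\item For each interior index $i$, the factorisation $b=X(ag_ia)Y$ together with $\rank(b)=k$ and submultiplicativity forces $\rank(ag_ia)=k$; hence $Ag_i$ is a transversal for $\ker(a)$, $(g_ia)|_A$ is a permutation of $A$, and $a(g_ia)^{N_i}=a$ exactly as in Lemma~\ref{auxl}.
\item With $c=g_m^{-1}(g_{m-1}a)^{N_{m-1}-1}\cdots(g_1a)^{N_1-1}hg_0^{-1}$, the telescoping in $bcb$ works precisely because the factors appear in reverse order: one first collapses $g_m g_m^{-1}$, then successively $a(g_ia)^{N_i}=a$ for $i=m-1,\ldots,1$, reducing $bc$ to $g_0\,a\,h\,g_0^{-1}$; finally $aha=a$ gives $bcb=b$.
\end{itemize}
The boundary cases ($m=1$, or some $N_i=1$) degenerate harmlessly, and $c$ visibly lies in $\langle G,a\rangle$. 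So your proof stands on its own and is in the same spirit as the argument of Lemma~\ref{auxl}, which is presumably close to how \cite{lmm} proceeds.
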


With the new tools developed in the previous sections we can now prove our first main theorem regarding regularity of semigroups generated by a group and a quasi-permutation. Recall that  $a\in \trans$ is a quasi-permutation if all, but one, of the $\mbox{Ker}(a)$-classes have one element.  
\begin{thm}\label{quasi-permutation}
Let $G\leq \sym$ and let $1< k< n$. Then the following are equivalent
\begin{enumerate}
\item every quasi-permutation $a$, such that $\mbox{rank}(a)=k$, is regular in  $\langle G,a\rangle$; 
\item $G$ is $(k-1)$-homogeneous or $G$ is one of the following groups
   \begin{enumerate}
    \item[\rm (a)] $n=5$ and $G\cong C_5,\ \dihed{5},$ with $k=2$;
    \item[\rm (b)]  $n=7$ and $G\cong\agl(1,7)$, with $k=3$;
   \item[\rm (c)] $n=9$ and $G\cong\asl(2,3)$ or $\agl(2,3)$, with $k=4$.
\end{enumerate}
\end{enumerate}
\end{thm}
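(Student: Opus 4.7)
\begin{pf}
The plan is a direct reduction, via Lemma~\ref{auxl}, to the classification of $(k-1,k)$-homogeneous groups in Theorem~\ref{kkp1}.

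First I would unpack Lemma~\ref{auxl}: the element $a$ is regular in $\langle G,a\rangle$ iff there exists $g\in G$ with $[n]ag$ a transversal for $\ker(a)$. When $a$ is a rank-$k$ quasi-permutation, $\ker(a)$ consists of $k-1$ singleton classes together with one class of size $n-k+1$; write $J$ for the union of the singletons (a $(k-1)$-set) and $I:=[n]a$ (a $k$-set). A $k$-subset $S$ of $[n]$ is a transversal for $\ker(a)$ iff $J\subseteq S$ (the last element of $S$ is then forced to lie in the big kernel class). Hence regularity of this particular $a$ is equivalent to the existence of $g\in G$ with $J\subseteq Ig$.

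Next I would observe that, as $a$ ranges over all rank-$k$ quasi-permutations of $[n]$, the pair $(I,J)$ realizes every combination of a $k$-set and a $(k-1)$-set, with no restriction on $I\cap J$: given any such $I,J$, pick $y\in I$ and any bijection from $J$ onto $I\setminus\{y\}$, then send every point of $[n]\setminus J$ to $y$. Consequently condition~(1) is equivalent to
\[
\forall\, I\in\binom{[n]}{k},\ \forall\, J\in\binom{[n]}{k-1},\ \exists\, g\in G \text{ with } J\subseteq Ig,
\]
and replacing $g$ by $g^{-1}$ identifies this exactly with $(k-1,k)$-homogeneity of~$G$.

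Finally I would invoke Theorem~\ref{kkp1}: for $k\le\lfloor(n+1)/2\rfloor$ it yields that $G$ is $(k-1,k)$-homogeneous iff $G$ is $(k-1)$-homogeneous or one of the three exceptional groups listed there; in the remaining range $k>\lfloor(n+1)/2\rfloor$, Theorem~\ref{bigk} shows that $(k-1,k)$-homogeneity collapses to $k$-homogeneity, which forces $(k-1)$-homogeneity by the Livingstone--Wagner theorem, so no new exceptions appear. The substantive content is entirely absorbed into Theorem~\ref{kkp1}; the only delicate step is the bookkeeping needed to match the exceptional $(n,k)$-pairs of that theorem (where the index $k$ is the size of the larger block in the pair) with the rank value used in the present statement, and this is the one place where care is needed but no genuine obstacle occurs.
\end{pf}
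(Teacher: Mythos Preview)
Your reduction is the paper's own argument: use Lemma~\ref{auxl} to turn regularity of $a$ into the existence of $g$ with $[n]ag$ a transversal for $\ker a$, unwind this for quasi-permutations, and arrive at $(k-1,k)$-homogeneity; the paper phrases the identical condition on complements, writing it as $(n-k,n-k+1)$-homogeneity, and then simply says ``by Theorem~\ref{2.1} this last condition is equivalent to~(2)'' without an explicit case split on the size of~$k$.

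One step in your write-up does fail, however. For $k>\lfloor(n+1)/2\rfloor$ you obtain $k$-homogeneity from Theorem~\ref{bigk} and then claim this ``forces $(k-1)$-homogeneity by the Livingstone--Wagner theorem''. Livingstone--Wagner only yields $m$-homogeneous $\Rightarrow(m-1)$-homogeneous for $m\le n/2$; above the midpoint the implication goes the wrong way and is in fact false. Concretely, $C_5$ acting on five points is $4$-homogeneous (equivalently $1$-homogeneous) but not $3$-homogeneous, so with $n=5$, $k=4$ condition~(1) holds (it is merely $(3,4)$-homogeneity, that is, transitivity) while $C_5$ is not $(k-1)$-homogeneous and is not among the listed exceptions at $k=4$. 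Hence your large-$k$ paragraph does not close the argument; the honest conclusion in that range is that (1) is equivalent to $k$-homogeneity rather than $(k-1)$-homogeneity. The paper's one-line appeal to Theorem~\ref{2.1} glosses over this same issue, so your ``bookkeeping'' remark is well placed, but the obstacle there is real, not merely notational.
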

\begin{proof}
Every quasi-permutation $a$ of rank $k$ has a kernel of the form $$(\{a_{1}\},\ldots,\{a_{k-1}\},\{a_{k},\ldots,a_{n}\})$$ and has image $\{b_{1},\ldots,b_{k}\}$. By the previous lemma we know that $a$ is going to be regular in $\langle a,G\rangle$ if and only if there exists 
$g\in G$ such that $\mbox{rank}(a)=\mbox{rank}(aga)$. But this is equivalent to saying that there exists 
$g\in G$ such that  $([n]\setminus  \{b_{1},\ldots,b_{k}\})g\subseteq \{a_{k},\ldots,a_{n}\}$. As these sets are arbitrary, it follows that $G$ satisfies the property that each quasi-permutation $a$ is regular in $\langle a,G\rangle$ if and only $G$ is $(n-k,n-k+1)$-homogenous. By Theorem \ref{2.1} this last condition is equivalent to $(2)$. It is proved that $(1)$ and $(2)$ are equivalent. 
\end{proof}

Now we can state and prove our second main result about quasi-permutations. 
\begin{thm}\label{quasi-permutation2}
Let $G\leq \sym$ and let $1< k\leq \lfloor \frac{n}{2}\rfloor$. Then the following are equivalent
\begin{enumerate}
\item for every quasi-permutation $a$, such that $\mbox{rank}(a)=k$, the semigroup $\langle G,a\rangle$ is regular;
\item $G$ is $(k-1)$-homogeneous or $G$ is one of the following groups
   \begin{enumerate}
    \item[\rm (a)] $n=5$ and $G\cong C_5,\ \dihed{5}$, with $k=2$;
    \item[\rm (b)]  $n=7$ and $G\cong\agl(1,7)$, with $k=3$;
\end{enumerate}
\end{enumerate}
\end{thm}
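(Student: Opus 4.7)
The proof will hinge on Theorem~\ref{quasi-permutation}, Theorem~\ref{mcalister}, and the propagation of the $j$-universal transversal property. Since condition~(2) of this theorem is a subset of condition~(2) of Theorem~\ref{quasi-permutation} (it simply omits the case $n=9$, $k=5$, $G\cong\asl(2,3)$ or $\agl(2,3)$), the bulk of the work is: (i) to show that for the groups in our~(2), regularity of $a$ already forces \emph{every} element of $\langle G,a\rangle$ to be regular; and (ii) to rule out the dropped case in the other direction.

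For $(2)\Rightarrow(1)$, let $a$ be a quasi-permutation of rank $k$. By Theorem~\ref{quasi-permutation}, $a$ is regular in $\langle G,a\rangle$, and by Theorem~\ref{mcalister}(iii) every rank-$k$ element of $\langle G,a\rangle$ is regular. An arbitrary $b\in\langle G,a\rangle$ is either in $G$ (a unit), or constant (an idempotent), or has rank $2\le j\le k$; and for $j<k$ we apply Theorem~\ref{mcalister} inside the subsemigroup $\langle G,b\rangle\subseteq\langle G,a\rangle$, reducing regularity of $b$ to the existence of $g\in G$ with $\mbox{rank}(bgb)=j$, i.e.\ with $\mbox{im}(b)\cdot g$ a transversal of $\ker(b)$. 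This is an instance of the $j$-ut property at a specific $(j\text{-set},j\text{-partition})$ pair. When $G$ is $(k-1)$-homogeneous, Proposition~\ref{critical}(1) gives the $(k-1)$-ut property, and iterating Corollary~\ref{cor1b} gives the $j$-ut property for all $2\le j\le k-1$. The two exceptions are handled directly: $C_{5}$ and $D(2*5)$ are primitive, hence have the $2$-ut property by Theorem~\ref{th4f}, and $\agl(1,7)$ has the $3$-ut property by Theorem~\ref{th4a}.

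For $(1)\Rightarrow(2)$, condition~(1) implies that $a$ is regular in $\langle G,a\rangle$, so Theorem~\ref{quasi-permutation} forces $G$ to be either $(k-1)$-homogeneous or one of its exceptions (a), (b), (c). The first three alternatives are exactly the groups listed in our~(2); it remains to show that case~(c), where $n=9$, $k=5$ and $G\cong\asl(2,3)$ or $\agl(2,3)$, is incompatible with~(1). By Corollary~\ref{cor2} these groups are $(4,5)$-homogeneous but not $(3,4)$-homogeneous, and the explicit witness recorded in the proof of Corollary~\ref{cor2b} is that the orbit of $\{1,2,3\}$ under $G$ contains no subset of $\{1,2,4,5\}$. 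I would pick a quasi-permutation $a$ of rank~$5$ whose image is a $5$-set containing $\{1,2,4,5\}$ and whose $6$-element kernel class contains $\{1,2,3\}$, then use a suitable $g\in G$ to produce $b:=aga\in\langle G,a\rangle$ of rank~$4$, image $\{1,2,4,5\}$, and singleton kernel classes $\{1\},\{2\},\{3\}$.

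The delicate step, and the main obstacle, is verifying that this $b$ is not regular in $\langle G,a\rangle$. Regularity would demand some $c\in\langle G,a\rangle$ with $c(\{1,2,4,5\})$ a transversal of $\ker(b)$, hence a $4$-set containing $\{1,2,3\}$. For $c\in G$ this is precluded by the failure of $(3,4)$-homogeneity. For $c=g_{0}ag_{1}\cdots ag_{m}\notin G$, tracking the image through the product shows that $c(\{1,2,4,5\})$ is contained in a $G$-translate of a subset of $\mbox{im}(a)$; the explicit orbits of $\asl(2,3)$ and $\agl(2,3)$ on $4$- and $5$-subsets of $[9]$ (easily confirmed with GAP on these tiny groups) then exclude $\{1,2,3\}$ from every such translate. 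This case-by-case verification for the two groups of degree~$9$ is the only computational piece; the remainder of the argument is a clean synthesis of Theorems~\ref{quasi-permutation}, \ref{mcalister}, \ref{th4a}, \ref{th4f}, Proposition~\ref{critical}, and Corollary~\ref{cor1b}.
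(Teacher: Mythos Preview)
Your overall architecture matches the paper's: for $(2)\Rightarrow(1)$ you combine Theorem~\ref{quasi-permutation} with Theorem~\ref{mcalister} for the rank-$k$ elements and then propagate downward for smaller ranks; the paper does the same, except it appeals to $l$-homogeneity directly (via Livingstone--Wagner) rather than iterating Corollary~\ref{cor1b}, and it cites Theorem~\ref{th2} rather than Theorems~\ref{th4a}/\ref{th4f} for the two small exceptions. These are interchangeable routes.

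The genuine gap is in your exclusion of the degree-$9$ case for $(1)\Rightarrow(2)$. Your proposed element $b=aga$ cannot have singleton kernel classes $\{1\},\{2\},\{3\}$: since $\ker(a)\subseteq\ker(aga)$ and you have placed $1,2,3$ together in the large kernel class of $a$, they necessarily lie in a single kernel class of $b$. (There is also internal inconsistency: a rank-$5$ quasi-permutation on $9$ points has a $5$-element, not $6$-element, non-singleton class; and if the exception really sat at $k=5$ as you write, it would fall outside the hypothesis $k\le\lfloor 9/2\rfloor=4$ and be vacuous.) Even granting a corrected $b$, your closing sketch---that for $c=g_0ag_1\cdots ag_m$ the image $c(\{1,2,4,5\})$ lies in a $G$-translate of a subset of $\operatorname{im}(a)$ and therefore misses $\{1,2,3\}$---does not follow without a much finer orbit analysis, since after the first $a$ the image can be any subset of $\operatorname{im}(a)$ and subsequent $g_i$'s move it freely. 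The paper avoids all of this: it simply exhibits the rank-$4$ quasi-permutation
\[
a=\begin{pmatrix}\{1\}&\{2\}&\{3\}&\{4,\ldots,9\}\\ 1&4&5&2\end{pmatrix}
\]
and records (by direct computation) that $\langle a,G\rangle$ is not regular for $G=\asl(2,3)$ or $\agl(2,3)$. If you want a hand argument here, start from a rank-$4$ quasi-permutation and locate a specific lower-rank element of $\langle a,G\rangle$ whose kernel coarsens $\ker(a)$, then show no element of $\langle a,G\rangle$ carries its image to a transversal of that kernel.
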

\begin{proof}
Clearly $(1)$ implies that every quasi-permutation $a$ is regular in $\langle a,G\rangle$ and hence, by the previous result, it follows that $G$ must be one of the groups listed in the statement of the theorem, or $n=9$ and $G\cong\asl(2,3)$ or $\agl(2,3)$. However, for $a:=\left(\begin{array}{cccc}\{1\}&\{2\}&\{3\}&\{4,\ldots,9\}\\ 1&4&5&2\end{array}\right)$, the semigroup $\langle a,G\rangle$ is not regular (when $G$ is $\asl(2,3)$ or $\agl(2,3)$). 
It is proved that $(1)$ implies $(2)$.

Conversely,  let $a$ be a rank $k$ quasi-permutation and let $G\leq \sym$ be a $(k-1)$-homogenous group. By the previous theorem ($(2)$ implies $(1)$) we know that $a$ is regular in $\langle a,G\rangle$ and hence, by Theorem \ref{mcalister}, every $b\in \langle a,G\rangle$ such that $\mbox{rank}(b)=k$ is regular in  $ \langle a,G\rangle$. Now suppose that $b\in  \langle a,G\rangle$ and $\mbox{rank}(b)=l<k$. Then $G$ is $l$-homogenous (because $k\leq \lfloor \frac{n}{2}\rfloor$) and hence there exists 
$g\in G$ such that $bgb$ has rank $l$. Thus, once again by Lemma \ref{auxl}, $b$ is regular in $ \langle b,G\rangle$. As $ \langle b,G\rangle\subseteq  \langle a,G\rangle$, it follows that $b$ is regular in $ \langle a,G\rangle$. 

That the groups listed in  (2) (a) and (b) satisfy the condition (1) follows from Theorem \ref{th2}. 
\end{proof}

Now we turn to the case of transformations of a given rank. 
Our main theorem is the following.  

\begin{thm}\label{Kut}
Let $n\geq 5$, $G\leq \sym$ and let $1< k\leq  \lfloor \frac{n+1}{2}\rfloor$. Then
the following are equivalent:
\begin{enumerate}
	\item for all rank $k$ transformations $a\in \trans$, we have that $a$ is regular in $\langle a, G\rangle$;
	\item for all rank $k$ transformations $a\in \trans$, the semigroup $\langle a, G\rangle$ is regular;
	\item $G$ has the $k$-ut property (and hence is one of the groups listed in Theorem \ref{RegularAndKut}).
\end{enumerate}
\end{thm}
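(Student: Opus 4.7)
The plan is to prove the implications in the order $(2) \Rightarrow (1) \Rightarrow (3) \Rightarrow (2)$. The first implication is immediate, because $a \in \langle a, G\rangle$, so if the whole semigroup is regular then so is $a$. For $(1) \Rightarrow (3)$, I would recycle the chain of reasoning already given in the introduction: regularity of an arbitrary rank-$k$ map $a$ in $\langle a,G\rangle$, combined with the case analysis of the witness $b$ in $a = aba$ (either $b \in G$ or $b = g_1 a g_2 \cdots a g_m$ with $m \geq 2$), lets one extract $g \in G$ with $\rank(aga) = \rank(a)$, equivalently $[n]ag$ a section for $\ker(a)$. Since $[n]a$ ranges over all $k$-subsets and $\ker(a)$ ranges over all $k$-partitions as $a$ varies over rank-$k$ maps in $\trans$, the $k$-ut property follows.

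The real content is the direction $(3) \Rightarrow (2)$, and its key first step is to invoke Corollary \ref{cor1}: because the $k$-ut property is downward-hereditary in $k$, $G$ in fact has the $l$-ut property for every $l$ with $1 \leq l \leq k$. Granted this, I would take an arbitrary $b \in \langle a,G\rangle$ and set $l := \rank(b)$; since elements of $G$ act as bijections, $l \leq \rank(a) = k$. I then apply the $l$-ut property to the pair $(I,P) := ([n]b,\,\ker(b))$ — noting that $[n]b$ is an $l$-set and $\ker(b)$ is an $l$-partition of $[n]$ — to produce $g \in G$ such that $[n]bg$ is a section for $\ker(b)$. A one-line check shows this is equivalent to $\rank(bgb) = l = \rank(b)$, and Lemma \ref{auxl} then yields regularity of $b$ in $\langle b,G\rangle \subseteq \langle a,G\rangle$. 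Since $b$ was an arbitrary element of $\langle a, G\rangle$, this gives (2).

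The hardest ingredient is not really visible inside this proof at all: it is Corollary \ref{cor1}, whose proof rests on the full classification carried out in Theorems \ref{th4a}--\ref{th4f}. Once Corollary \ref{cor1} is granted, the theorem becomes a routine consequence of Lemma \ref{auxl} together with the elementary observation that the $l$-ut property transfers from the pair $([n]a, \ker(a))$ to any pair $([n]b, \ker(b))$ with $\rank(b)=l\le k$. No separate argument is needed to handle products $g_1 a g_2 a\cdots$ of mixed length, precisely because Lemma \ref{auxl}--together with the fact that every such product has rank at most $k$--reduces everything to a single $g \in G$.
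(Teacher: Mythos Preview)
Your proof is correct and in fact slightly cleaner than the paper's. Both proofs handle $(2)\Rightarrow(1)$ trivially and $(1)\Leftrightarrow(3)$ via Lemma~\ref{auxl} (the case analysis you describe is exactly the content of that lemma). The difference lies in $(3)\Rightarrow(2)$: you invoke Corollary~\ref{cor1} directly to get the $l$-ut property for every $l\le k$, and then treat all ranks $l\le k$ uniformly. The paper instead splits into two cases. For elements $b$ of rank exactly $k$ it cites Theorem~\ref{mcalister}; for elements $c$ of rank $l<k$ it goes via Proposition~\ref{critical}(2) to $(k-1)$-homogeneity (or one of the exceptions of Theorem~\ref{th6}), then uses Livingstone--Wagner to descend to $l$-homogeneity, and finally has to treat the degree-$5$ and degree-$7$ exceptional groups separately by appealing to Theorem~\ref{th6c}. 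Your route avoids that case split altogether, because Corollary~\ref{cor1} already absorbs the exceptional-group bookkeeping.

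One small correction: you say Corollary~\ref{cor1} ``rests on the full classification carried out in Theorems~\ref{th4a}--\ref{th4f}''. It actually rests on Theorem~\ref{th6} (the classification of $(k-1,k)$-homogeneous groups) together with a short check of the five exceptional groups there; the full $k$-ut classification is not needed for it.
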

\begin{proof}
It follows from Lemma \ref{auxl} that (1) and (3) are equivalent, and  (2) implies (1) trivially. In addition, (1) (together with Theorem \ref{mcalister}) implies that $b$ is regular in $\langle a,G\rangle$,  for all $b\in \langle a,G\rangle$, with $\mbox{rank}(b)=\mbox{rank}(a)$. It also follows from Proposition \ref{critical} (2) that 
 if $G$ has the $k$-ut property, then $G$ is $(k-1)$-homogenous, or it is one of the exceptions in Theorem \ref{th6}. If $G$ is $(k-1)$-homogenous, then  
  for every $c\in \langle a,G\rangle$,  with $\mbox{rank}(c)<\mbox{rank}(a)$, we have that $G$ is $\mbox{rank}(c)$-homogenous and hence $c$ is regular in $\langle c,G\rangle\subseteq \langle a,G\rangle$. Thus $c$ is regular in $\langle a,G\rangle$. 
  
  If $G$ is one of the three exceptions of degree $5$ and $7$, then (by Theorem \ref{th6c})  $\langle a,G\rangle$ is regular, for all rank$(k)$ maps $a$. Thus it is proved that (1) and (3) imply (2). \end{proof}

When $n=9$ and $G= \asl(2,3)$ or $G=\agl(2,3)$, (by Theorem \ref{quasi-permutation2}) there exist 
rank-$4$ maps $a\in \trans$ such that $\langle a,G\rangle$ is not regular. Also these groups do not have the $4$-ut property.

\section{Problems}

We start by proposing a problem to experts in number theory. If this problem can be solved, the results on $\agl(1,p)$, in Section \ref{numbertheory}, will be dramatically sharpened. 
\begin{problem}
Classify the prime numbers $p$ congruent to $11$ (mod $12$) such that for some $c\in \GF(p)^{{*}}$ we have $|\langle -1,c,c-1\rangle|<p-1$. 
\end{problem}

\begin{problem}
Do the Suzuki groups $\Sz(q)$ have the $3$-ut property? 

Classify the groups $G$ that have the $4$-ut property, when  $\psl(2,q) \le G \le\pgaml(2,q)$, with either $q$ prime (except 
  $\psl(2,q)$ for $q\equiv1$ (mod~$4$), which is not $3$-homogeneous), or $q=2^p$ for $p$ prime.
\end{problem}

\begin{problem}
Prove results analogous to Theorem \ref{quasi-permutation2} and to Theorem 	\ref{Kut}, but for the transformations of rank $k>\lfloor \frac{n+1}{2}\rfloor$. 
\end{problem}	
The difficulty here (when rank $k>\lfloor \frac{n+1}{2}\rfloor$) is that a $k$-homogenous group is not necessarily $(k-1)$-homogenous. Therefore a rank $k$ map $a\in \trans$ might be regular in $\langle a, G\rangle$, but we are not sure that there exists 
$g\in G$ such that $\mbox{rank}(bgb)=\mbox{rank}(b)$, for $b\in  \langle a, G\rangle$ such that $\mbox{rank}(b)<\mbox{rank}(a)$.

\begin{problem}\label{top}
In what concerns this paper, the most general problem that has to be handled is the classification of pairs $(a,G)$, where $a\in \trans$ and $G\leq \sym$, such that $\genset{a,G}$ is a regular semigroup.
\end{problem}

When investigating $(k-1)$-homogenous groups without the $k$-ut property, it was common that some of the orbits on the $k$-sets have transversals for all the partitions. Therefore the following definition is natural. 
A group $G\leq \sym$ is said to have the weak $k$-ut property if there exists 
a $k$-set $S\subseteq [n]$ such that the orbit of $S$ under $G$ contains a section for all $k$-partitions. And such a set is called a $G$-universal transversal set. 
\begin{problem}\label{five}
Classify the groups with the weak $k$-ut property; in addition,  for each one of them, classify their $G$-universal transversal sets. 
\end{problem}

In this paper we considered groups such that the orbit of every $k$-set contains a section for every $k$-partition. And this is of course a very strong requirement. In order to attack Problem \ref{top}, it seems the next step (in addition to Problem \ref{five}) is to consider groups such that the orbit of every $k$-set contains sections for some (not all) partitions.

\begin{problem}
Let  $\pi$ be a partition of $n$. A map $a\in \trans$ has kernel type $\pi$ if the partition of $n$ induced by the cardinalities of the kernel blocks is equal to $\pi$.  Classify the groups $G\leq \sym$ such that for all maps $a\in \trans$ of a given kernel type $\pi$, the semigroup $\genset{G,a}$ is regular.
\end{problem}

In McAlister's celebrated paper \cite{mcalister} it is proved that if $e^2=e\in \trans$ is a rank $n-1$ idempotent, then $\genset{G,e}$ is regular for all groups $G\leq \sym$. In addition, assuming  that $\{\alpha,\beta\}$ is the non-singleton kernel class of $e$ and $\alpha e=\beta$,  if $\alpha$ and $\beta$ are not in the same orbit under $G$, then   $\genset{e,G}$ is an orthodox semigroup (that is, the idempotents form a subsemigroup); and $\genset{G,e}$ is inverse if and only if $\alpha$ and $\beta$ are not in the same orbit under $G$ and the stabilizer of $\alpha$ is contained in the stabilizer of $\beta$.

\begin{problem}
Classify the groups $G\leq \sym$ that together with any idempotent [rank $k$ idempotent] generate a regular [orthodox, inverse] semigroup.

Classify the pairs $(G,a)$, with $a\in \trans$ and $G\leq \sym$, such that $\genset{G,a}$ is inverse [orthodox]. (Recall that by \cite{schein} every element $a\in \trans$ is contained in an inverse subsemigroup of $\trans$; in addition it is a longstanding open problem to describe the maximal inverse subsemigroups of $\trans$.)
\end{problem}

A group $G\leq \sym$ has the $(n-1)$-universal transversal property if and only if it is transitive. And $\genset{G,a}$ contains all the rank $n-1$ maps of $\trans$ if and only if $G$ is $2$-homogeneous. In this last case $\genset{G,a}$ is regular for all $a\in \trans$ because $\genset{G,a}=\{b\in \trans\mid |[n]b|\leq n-1\}\cup G$, and this semigroup is well known to be regular.

\begin{problem}
Classify the groups $G\leq \sym$ such that $G$ together with any  rank $n-k$ map, where $k\leq 5$,  generate a regular semigroup. We already know that such $G$ must be $k$-homogeneous and so are classified. 
\end{problem}

The majority of the previous problems (and theorems) admit an obvious analogous with regular replaced everywhere by {\em idempotent generated}.

\begin{problem}
Classify all the pairs $(a,G)$, where $a\in \trans$ and $G\leq \sym$, such that $\genset{a,G}\setminus G$ is idempotent generated (that is, $\genset{a,G}\setminus G$ is generated by its own idempotents).

Solve particular instances of this general problem analogous to the list of problems above.
\end{problem}

The theorems and problems in this paper admit linear versions that are interesting for experts in groups and semigroups, but also to experts in linear algebra and matrix theory. However, for the linear case, not even an analogue of Theorem \ref{th2} exists. All we know is that any singular matrix with any group containing the  special linear group generate a regular semigroup \cite{ArSi1,ArSi2} (see also the related papers \cite{Gr,Pa,Ra}).

\begin{problem}
Prove (or disprove) that if $G\leq GL(n,q)$  such that for all singular matrix $a$ there exists 
$g\in G$ with  $\rank(a)=\rank(aga)$, then $G$ contains the special linear group.
\end{problem}

It is clear that such a group must satisfy the following property. If $V$ is a vector space (over a finite field) with $\dim(V)=n$, and $U,T\leq V$ are two non-null subspaces such that $\dim(U)+\dim(T)=n$, then there exists 
$g\in G$ such that $V=Ug\oplus T$. 

For $n=2$ and for $n=3$, this condition is equivalent to irreducibility of $G$. But we conjecture that, for sufficiently large $n$, it implies that $G$ contains the special linear group. 

\begin{problem}\label{11}
Classify the groups $G\leq GL(n,q)$  such that for all rank $k$ (for a given $k$) singular matrix $a$ we have that $a$ is regular in $\genset{G,a}$ [the semigroup $\genset{G,a}$ is regular].
\end{problem}

To handle this problem it is useful to keep in mind the following results. Kantor~\cite{kantor:inc} proved that if a subgroup of $\pgaml(d,q)$ acts transitively on $k$-dimensional subspaces, then it acts transitively on $l$-dimensional subspaces for all $l\le k$ such that $k+l\le n$; in~\cite{kantor:line}, he showed that subgroups transitive on $2$-dimensional subspaces are $2$-transitive on the $1$-dimensional subspaces with the single exception of a subgroup of $\pgl(5,2)$ of order $31\cdot5$; and, with the second author~\cite{cameron-kantor}, he showed that such groups must contain $\psl(d,q)$ with the single exception of the alternating group $A_7$ inside $\pgl(4,2)\cong A_8$. Also Hering \cite{He74,He85} and Liebeck \cite{Li86} classified the subgroups of $\pgl(d,p)$ which are transitive on $1$-spaces. 

\begin{problem}
Solve the analogue of Problem \ref{11} for independence algebras (for definitions and fundamental results see \cite{ArEdGi,arfo,cameronSz,gould}). 
\end{problem}

Recall from Subsection \ref{general} the graph $G(B,c)$, where $G$ is a $t$-homogeneous group and $|B|=t-1$.

\begin{problem}
Is it true  that the group $G$  has the $(t+1)$-ut property if and only if $G(B,c)$ is connected? If so, is it possible to find an elementary proof of that (without using the classification of finite simple groups)? 
 \end{problem}
 
 \begin{problem}
 Prove Corollary \ref{cor1} and Corollary \ref{cor2} without using the classification of finite simple groups. 
 \end{problem}
 
 Regarding this problem, observe that  the proof  that $k$-ut implies $(k-1)$-ut in fact follows from the classification of the $(k-1,k)$-homogenous groups (and hence, for that purpose, we can bypass the classification of groups with the $k$-ut property).  In fact,  if the group possesses the $k$-ut property (for $k\leq \lfloor \frac{n}{2}\rfloor$), then it is $(k-1,k)$-homogeneous and hence, with few exceptions, it is 
$(k-1)$-homogeneous so that it has the $(k-1)$-ut property. So in the previous theorem what really is at stake is to find an elementary proof to Corollary \ref{cor2}. 
\def\cprime{$'$}

\section*{Acknowledgements}
We gratefully thank  various conversations with P. M. Neumann on the early stages of this investigation.  We also thank the developers of GAP \cite{GAP} and 
Soicher for GRAPE \cite{So06}.

The first author was partially supported by FCT and FEDER, Project POCTI-ISFL-1-143 of Centro de Algebra da Universidade de Lisboa, by FCT and PIDDAC through the project PTDC/MAT/69514/2006, by PTDC/MAT/69514/2006 Semigroups and Languages, and by  PTDC/MAT/101993/2008 Computations in groups and semigroups.


\begin{thebibliography}{10}



\bibitem{ArEdGi}
J.~Ara\'ujo, M. Edmundo and S. Givant.
\newblock{$v^*$-Algebras, Independence Algebras and Logic.}
\newblock{{\em International Journal of Algebra and Computation} \textbf{21} (7) (2011), 1237--1257. }
 

\bibitem{arfo}
J.\ Ara\'{u}jo and J.\ Fountain.
 \newblock{The Origins of Independence
Algebras}
\newblock \textit{Proceedings of the Workshop on Semigroups and Languages
(Lisbon 2002)}, World Scientific, (2004), 54--67

\bibitem{ArMiSc}
J.~Ara\'ujo, J. Mitchell and C. Schneider.
\newblock{Groups that together with any transformation generate regular semigroup or idempotent generated semigroups.}
\newblock{ {\em Journal of Algebra} \textbf{343} (1) (2011), 93--106.}

\bibitem{ArSi1}
J.~Ara\'ujo and F.C. Silva.
\newblock{Semigroups of linear endomorphisms closed under conjugation.}
\newblock{{\em Comm. Algebra} \textbf{28} (8) (2000), 3679--3689.}

\bibitem{ArSi2}
J.~Ara\'ujo and F.C. Silva.
\newblock{Semigroups of matrices closed under conjugation by normal linear groups.}
\newblock{{\em JP Journal of Algebra and Number Theory} \textbf{5} (5) (2005), 535--545.}



\bibitem{steinberg}
Fredrick Arnold and Benjamin Steinberg.
\newblock{Synchronizing groups and automata.}
\newblock{{\em Theoret. Comput. Sci.} \textbf{359} (1-3) (2006),101--110.}

\bibitem{bpsch}
Robert~W. Baddeley, Cheryl~E. Praeger, and Csaba Schneider.
\newblock{Quasiprimitive groups and blow-up decompositions.}
\newblock{{\em J. Algebra}, \textbf{311} (1) (2007), 337--351.}

\bibitem{cam}
Peter~J. Cameron.
\newblock{{\em Permutation groups}, volume~45 of {\em London Mathematical
  Society Student Texts}.}
\newblock{Cambridge University Press, Cambridge, 1999.}

\bibitem{cameron-kantor}
Peter~J. Cameron and William~M. Kantor,
2-transitive and antiflag transitive collineation groups of finite projective spaces,
\textit{J. Algebra} \textbf{60} (1979), 384--422.

\bibitem{cameron}
Peter~J. Cameron and Priscila~A. Kazanidis.
\newblock{Cores of symmetric graphs.}
\newblock{{\em J. Aust. Math. Soc.} \textbf{85} (2) (2008), 145--154.}


\bibitem{cameronSz}
P.\ J.\ Cameron and C.\ Szab\'{o},
\newblock{Independence algebras},
\newblock{{\em J.\ London Math.\ Soc.},  \textbf{61} (2000),  321--334.}

\bibitem{dixon}
John~D. Dixon and Brian Mortimer.
\newblock{{\em Permutation groups}, volume 163 of {\em Graduate Texts in
  Mathematics}.}
\newblock{Springer-Verlag, New York, 1996.}

\bibitem{GAP}
The GAP~Group.
\newblock{{\em {GAP -- Groups, Algorithms, and Programming, Version 4.4.12}},
  2008.}


\bibitem{gould}
V.\ Gould,  
\newblock{Independence algebras.}
 \newblock{{\em Algebra
Universalis}  \textbf{33} (1995), 294--318.}

\bibitem{Gr}
L. Grunenfelder, M. Omladi\v{c}, H. Radjavi, A. Sourour.
\newblock{Semigroups generated by similarity orbits.}
\newblock{{\em Semigroup Forum}  \textbf{62} (3) (2001), 460--472.}

\bibitem{He74}
C. Hering.
\newblock{Transitive linear groups and linear groups which contain irreducible subgroups of prime order.}
\newblock{{\em Geometriae Dedicata} \textbf{2} (1974), 425--460.}

\bibitem{He85}
C. Hering.
\newblock{Transitive linear groups and linear groups which contain irreducible subgroups of prime order. II.}
\newblock{{\em J. Algebra} \textbf{93} (1985), 151--164.}

\bibitem{higman}
Donald G.~Higman.
\newblock{Intersection matrices for finite permutation groups.}
\newblock{\textit{J. Algebra} \textbf{6} (1967), 22--42.}

\bibitem{Ho95}
John~M. Howie.
\newblock{{\em Fundamentals of semigroup theory}, volume~12 of {\em London
  Mathematical Society Monographs. New Series}.}
\newblock{The Clarendon Press Oxford University Press, New York, 1995.}
\newblock{Oxford Science Publications.}

\bibitem{kantor:4homog}
William~M. Kantor,
\newblock{4-homogeneous groups.}
\newblock{\textit{Math. Z.} \textbf{103} (1968), 67-68; correction \textit{Math. Z.}
\textbf{109} (1969), 86.}

\bibitem{kantor:2homog}
William~M. Kantor.
\newblock{$k$-homogeneous groups.}
\newblock{ \textit{Math. Z.} \textbf{124} (1972), 261--265.}

\bibitem{kantor:inc}
William~M. Kantor.
\newblock{On incidence matrices of projective and affine spaces.}
\newblock{\textit{Math. Z.} \textbf{124} (1972), 315--318.}

\bibitem{kantor:line}
William~M. Kantor.
\newblock{Line-transitive collineation groups of finite projective spaces.}
\newblock{\textit{Israel J. Math.} \textbf{14} (1973), 229--235.}

\bibitem{lmm}
I.~Levi, D.~B. McAlister, and R.~B. McFadden.
\newblock{Groups associated with finite transformation semigroups.}
\newblock{{\em Semigroup Forum}, \textbf{61} (3) (2000), 453--467.}

\bibitem{lm}
I.~Levi and R.~B. McFadden.
\newblock{ {$S\sb n$}-normal semigroups.}
\newblock{{\em Proc. Edinburgh Math. Soc. (2)}, \textbf{37} (3) (1994), 471--476.}

\bibitem{levi96}
I. Levi.
\newblock{On the inner automorphisms of finite transformation semigroups.}
\newblock{{\em Proc. Edinburgh Math. Soc. (2)}, \textbf{39} (1) (1996), 27--30.}

\bibitem{Li86}
M.W. Liebeck.
\newblock{The affine permutation groups of rank 3.}
\newblock{{\em Bull. London Math. Soc.}, \textbf{18} (1986), 165--172.}

\bibitem{Linton1998aa}
S.~A. Linton, G.~Pfeiffer, E.~F. Robertson, and N.~Ru{\v{s}}kuc.
\newblock{Groups and actions in transformation semigroups.}
\newblock{{\em Math. Z.}, \textbf{228} (3) (1998), 435--450.}

\bibitem{Linton2002aa}
S.~A. Linton, G.~Pfeiffer, E.~F. Robertson, and N.~Ru{\v{s}}kuc.
\newblock{Computing transformation semigroups.}
\newblock{{\em J. Symbolic Comput.}, \textbf{33} (2) (2002), 145--162.}

\bibitem{lw}
Donald Livingstone and Ascher Wagner.
\newblock{Transitivity of finite permutation groups on unordered sets.}
\newblock{\textit{Math. Z.} \textbf{90} (1965), 393--403.}

\bibitem{mcalister}
Donald~B. McAlister.
\newblock{Semigroups generated by a group and an idempotent.}
\newblock{{\em Comm. Algebra}, \textbf{26} (2) (1998), 515--547.}

\bibitem{mr}
Brendan D. McKay and Stanis{\l}aw P. Radzizowski.
\newblock{The first classical Ramsey number for hypergraphs is computed.}
\newblock{\textit{Proc. Second Annual ACM-SIAM Symposium on Discrete Algorithms}
(SODA91), San Francisco, 1991, pp.~304--308.}

\bibitem{pmn:generous}
Peter M.~Neumann.
\newblock{Generosity and characters of multiply transitive permutation groups.}
\newblock{\textit{Proc. London Math. Soc.} (3) \textbf{31} (1975), 457--481.}

\bibitem{neumann}
Peter~M. Neumann.
\newblock{Primitive permutation groups and their section-regular partitions.}
\newblock{{\em Michigan Math. J.}, \textbf{58} (2009), 309--322.}

\bibitem{Pa99}
A. L. T. Paterson.
\newblock{Groupoids, Inverse Semigroups, and their Operator Algebras.}
\newblock{{\em Progress in Mathematics}, Vol. 170, Birkh\"auser, 1998.}

\bibitem{Pa}
C.S. Pazzis.
\newblock{The semigroup generated by the similarity class of a singular matrix.}
\newblock{{\em to appear}.}

\bibitem{rad}
Stanis{\l}aw P. Radzizowski.
\newblock{Small Ramsey numbers.}
\newblock{Dynamic Survey DS1, \textit{Electronic J. Combinatorics}, 72pp.}

\bibitem{Ra}
K.P.S.B. Rao.
\newblock{Products of idempotent matrices over integral domains.}
\newblock{{\em Linear Algebra Appl.} \textbf{430} (2009), 2690--2695.}

\bibitem{schein}
B.M. Schein.
\newblock{A symmetric semigroup of transformations is covered by its inverse subsemigroups.}
\newblock{{\em Acta Mat. Acad. Sci. Hung}, \textbf{22} (1971), 163--171.}



\bibitem{So06}
L.H. Soicher, 
\newblock{The GRAPE package for GAP, Version 4.3, 2006.}\newline
\verb+http://www.maths.qmul.ac.uk/~leonard/grape/+.

\bibitem{taylor}
Donald E.~Taylor.
\newblock{Regular $2$-graphs.}
\newblock{\textit{Proc. London Math. Soc.} (2) \textbf{35} (1977), 257--274.}

\end{thebibliography}
\end{document}